\colorlet{cite}{red}
\tikzset{ 
  baseline=-2.3pt,
  text height=1.5ex, text depth=0.25ex,
  >=stealth,
  node distance=2cm,
  mid/.style={fill=white,inner sep=2.5pt},
}
\newtheoremstyle{mydef}
  {}		
  {}		
  {}		
  {}		
  {\scshape}	
  {. }		
  { }		
  {\thmname{#1}\thmnumber{ #2}\thmnote{ #3}}	
\newtheorem{theorem}{Theorem}[section]
\newtheorem*{theorem*}{Theorem}
\newtheorem{proposition}[theorem]{Proposition}
\newtheorem*{proposition*}{Proposition}
\newtheorem{lemma}[theorem]{Lemma}
\newtheorem*{lemma*}{Lemma}
\newtheorem{corollary}[theorem]{Corollary}
\newtheorem*{corollary*}{Corollary}
\theoremstyle{definition}
\newtheorem{definition}[theorem]{Definition}
\newtheorem*{definition*}{Definition}
\newtheorem{example}[theorem]{Example}
\theoremstyle{remark}
\newtheorem{remark}[theorem]{Remark}
\author{Juan Sebasti\'an Herrera-Carmona and Fabricio Valencia}
\subjclass[2020]{22A22, 58H05, 58D19}
\address{J. S. Herrera-Carmona, F. Valencia - Instituto de Matem\'atica e Estad\'istica, Universidade de S\~ao Paulo, Rua do Mat\~ao 1010, Cidade Universit\'aria, 05508-090 S\~ao Paulo - Brasil. \newline  
sebast1anherr3ra@gmail.com, fabricio.valencia@ime.usp.br}
\date{\today}
\title{Isometric Lie 2-group actions on Riemannian groupoids}
\begin{document}
\maketitle

\begin{abstract}
We study isometric actions of Lie $2$-groups on Riemannian groupoids by exhibiting some of their immediate properties and implications. Firstly, we prove an existence result which allows both to obtain 2-equivariant versions of the Slice Theorem and the Equivariant Tubular Neighborhood Theorem and to construct bi-invariant groupoid metrics on compact Lie $2$-groups. We provide natural examples, transfer some classical constructions and explain how this notion of isometric $2$-action yields a way to develop a 2-equivariant Morse theory on Lie groupoids. Secondly, we give an infinitesimal description of an isometric Lie $2$-group action. We define an algebra of transversal infinitesimal isometries associated to any Riemannian $n$-metric on a Lie groupoid which in turn gives rise to a notion of geometric Killing vector field on a quotient Riemannian stack. If our Riemannian stack is separated then we prove that the algebra formed by such geometric Killing vector fields is always finite dimensional. 
\end{abstract}

\tableofcontents
\section{Introduction}

Isometric actions of Lie groups on Riemannian manifolds have been widely studied in the literature, as they constitute a powerful tool applied to deal with several interesting problems in both mathematics and physics, yielding important geometric-topological consequences and describing certain physical phenomena in nature. They appear in all branches of science where symmetries preserving length and angle measures play a role. In this paper we propose a generalization of the classical setting of isometric actions by studying a notion of isometric action of a categorified version of a Lie group \cite{BD,BS} on a categorified version of a Riemannian manifold \cite{dHF,dHF2,GGHR,PPT}. More precisely, we are interested in studying isometric actions of (strict) Lie $2$-groups on Riemannian groupoids. Our purpose here is twofold. On the one hand, we will be mainly concerned in studying their existence as well as their immediate implications, exhibiting examples and transferring some classical constructions into such a new framework. As an interesting application we will point out how this notion of isometric $2$-action yields a way to develop a 2-equivariant Morse theory on Lie groupoids. On the other hand, we will bring an infinitesimal description of an isometric Lie $2$-group action. Our approach will lead us to the study of an algebra of transversal infinitesimal isometries associated to any Riemannian $n$-metric on a Lie groupoid, which in turn will give rise to a notion of geometric Killing vector field on a quotient Riemannian stack.

Lie groupoids provide a general framework suitable for working with several classical geometries, as they generalize  manifolds, Lie groups, submersions, Lie group actions, foliations, pseudo-groups, vector and principal bundles, among others, giving lead to a new perspective on classical geometric questions and results. Besides, these objects can be seen as an intermediate step in defining differentiable stacks,
some geometric objects admitting singularities and generalizing both manifolds and orbifolds \cite{dH,MM}. The detailed treatment of the interaction between these two theories provides us with a clean way to perform differential geometry on singular spaces. 

As we mentioned earlier, we want to study some features of a natural notion of isometric action of a Lie $2$-group on a Riemannian groupoid. On the one side, as it was explicitly mentioned by Baez and Lauda in \cite{BD}, the notion of Lie 2-group goes back to Brown and Spencer in \cite{BS} where it became clear that classical group theory is just the beginning of a larger subject that sometimes is called higher-dimensional group theory. In many contexts where we are tempted in using groups to tackle certain symmetries-involved problems, it turns out actually to be more natural to use a richer kind of structure where, in addition to group elements describing symmetries, we also have isomorphisms between these, thus describing symmetries between symmetries. On the other side, Riemannian groupoids recently appear into the picture as a differentiable model allowing to carry out Riemannian geometry techniques over more general singular spaces than orbifolds or leaf spaces of singular foliations. Since the seminal works \cite{dHF,dHF2}, the notion of Riemannian groupoid (stack) defined and studied therein has been applied to satisfactorily extend several theories where Riemannian manifolds have played an important role. For instance, some of the recent contributions in which Riemannian groupoid metrics have been used as a tool to describe topological and geometric features of Lie groupoids as well as their differentiable stacks come in order.
\begin{itemize}
\item Resolutions of proper Riemannian groupoids were introduced in \cite{PTW} in order to obtain a desingularization of their underlying differentiable stacks via a successive blow-up construction.
\item A theory of stacky geodesics on Riemannian stacks was developed in \cite{dHdM} allowing to establish a stacky version
of the Hopf--Rinow Theorem.
\item The problem of understanding invariant linearization of proper Lie groupoids was addressed in \cite{dHdM2} where the authors fixed and extended previous results in the literature as well as provided a sufficient criterion that uses compatible complete metrics and covers the case of proper group actions.
\item A notion of Morse Lie groupoid morphism was introduced in \cite{OV}, allowing to extend the main results of classical Morse theory to the context of Lie groupoids and differentiable stacks.
\end{itemize}

The paper is organized as follows. In Section \ref{S:2} we briefly introduce the necessary terminology and notions about Riemannian groupoids and Lie $2$-groups we will be dealing with along the paper. In Section \ref{S:3} we define our notion of isometric Lie $2$-group action on a Riemannian groupoid:
\begin{definition*}
A $2$-action $\theta=(\theta^1,\theta^0):(G_1\times X_1\rightrightarrows G_0\times X_0)\to(X_1\rightrightarrows X_0)$ of a Lie 2-group $G_1\rightrightarrows G_0$ on a Riemannian groupoid $(X_1\rightrightarrows X_0,\eta)$ is said to be \emph{isometric} if $G_2$ acts by isometries on $(X_2,\eta^{(2)})$.
\end{definition*}

For the sake of simplicity at several stages of our work will be enough to consider only $1$-metrics $\eta^{(1)}$ on $X_1$ so that in those cases we proceed by supposing that $G_1\rightrightarrows G_0$ acts isometrically on $(X_1\rightrightarrows X_0,\eta)$ if $G_1$ acts by isometries on $(X_1,\eta^{(1)})$. It is important to mention that most of our results below are also valid when working with general $n$-metrics on $X_n$ for $n\geq 3$, see Remark \ref{Rmk2}. We show that we can pass to the quotient groupoid metrics by free and proper isometric Lie $2$-group actions and, motivated by the proof of the groupoid linearization theorem provided in \cite{dHF}, we explain how to construct $2$-equivariant weak linearizations around $G_0$-invariant saturated submanifolds in $X_0$. This is the content of Proposition \ref{PQuotient} and Proposition \ref{Lin1}, respectively. Also, by a dual averaging procedure we obtain the following result:

\begin{theorem*}[\ref{Existence2}]
	Let $\theta$ be a $2$-action of a Lie $2$-group $G_1\rightrightarrows G_0$ on a proper groupoid $X_1\rightrightarrows X_0$. If $G_1$ is compact then there always exists a Riemannian groupoid metric on $X_1\rightrightarrows X_0$ for which $\theta$ becomes an isometric $2$-action.
\end{theorem*}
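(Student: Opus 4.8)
The plan is to produce the desired metric by starting from an arbitrary groupoid metric, which exists by properness, and then forcing $G_1$-invariance by averaging over the compact group $G_1$; the only delicate point is that the averaging has to be carried out on \emph{dual} metrics, otherwise the transversal invariance of the groupoid metric is destroyed. I will work in the simplified setting of $1$-metrics explained right after the definition, the general $n$-metric case being handled level by level on the nerve. Since $X_1\rightrightarrows X_0$ is proper, \cite{dHF} furnishes a groupoid metric $\eta_0$: concretely a metric $\eta_0^{(1)}$ on $X_1$ together with $\eta_0^{(0)}$ on $X_0$ such that the source $s_X$ and the target $t_X$ are Riemannian submersions onto $(X_0,\eta_0^{(0)})$ and the inversion $\iota_X$ is an isometry. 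The goal is to replace $\eta_0$ by a groupoid metric $\eta$ which is in addition $G_1$-invariant, so that $G_1$ acts by isometries on $(X_1,\eta^{(1)})$.

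First I would fix the averaged base. Since $\mathsf s\colon G_1\to G_0$ is a surjective submersion admitting the unit section, $G_0$ is a closed submanifold of the compact manifold $G_1$, hence compact, and I average $\eta_0^{(0)}$ over $G_0$ to get a $G_0$-invariant metric $\eta^{(0)}$ on $X_0$. The heart of the matter is the averaging on $X_1$. Naively averaging the metric tensors $\int_{G_1}(g\cdot)^*\eta_0^{(1)}\,dg$ yields a $G_1$-invariant metric, but there is no reason for $s_X$ and $t_X$ to remain Riemannian submersions: the submersion (or \emph{fibered}) condition is not convex in the metric, so a convex combination of fibered metrics need not be fibered. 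This is the main obstacle, and it is resolved precisely by the dual averaging procedure.

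The key observation is the dual characterization of a Riemannian submersion: a surjective submersion $\pi\colon(E,h)\to(B,g_B)$ is Riemannian if and only if $\pi^*\colon(T^*B,g_B^*)\to(T^*E,h^*)$ is a fibrewise isometric embedding of cometrics, a condition that is \emph{affine} in $h^*$ and hence preserved under convex combinations. I would therefore average the dual metric, setting $(\eta^{(1)})^*=\int_{G_1}(g\cdot)^*(\eta_0^{(1)})^*\,dg$ with respect to normalized Haar measure, and let $\eta^{(1)}$ be the metric it determines; this $\eta^{(1)}$ is $G_1$-invariant by construction. Because $\theta$ is a morphism of Lie groupoids, each $g\in G_1$ acts on $X_1$ covering, through $s_X$ and $t_X$ respectively, the diffeomorphisms of $X_0$ given by $\mathsf s(g)$ and $\mathsf t(g)$; hence every $(g\cdot)^*(\eta_0^{(1)})^*$ remains fibered over both $s_X$ and $t_X$, and by affineness so does the average. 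The base cometric that the average induces through $s_X$ is the integral of $(\eta_0^{(0)})^*$ against the pushforward measure $\mathsf s_*\,dg$; since $\mathsf s\colon G_1\to G_0$ is a surjective homomorphism of compact groups this pushforward is normalized Haar on $G_0$, so the induced base cometric equals the $G_0$-average $(\eta^{(0)})^*$. The identical computation for $\mathsf t$ gives the same answer, so $s_X$ and $t_X$ are Riemannian submersions onto one and the same base $(X_0,\eta^{(0)})$.

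It remains to check that inversion invariance survives. Since $\theta$ intertwines the two groupoid inversions one has $\iota_X\circ(g\cdot)=(\iota_G(g)\cdot)\circ\iota_X$, where $\iota_G$ denotes the inversion of the $2$-group viewed as a groupoid; as $\iota_G$ is a measure-preserving diffeomorphism of the compact group $G_1$, pulling back the dual average by $\iota_X$ reproduces it, so $\iota_X$ stays an isometry. Collecting these facts, $\eta=(\eta^{(0)},\eta^{(1)})$ is a genuine groupoid metric for which $G_1$ acts by isometries, which proves the statement in the simplified setting. For the general $n$-metric one performs the same dual averaging level by level on the nerve $X_\bullet$, averaging each $\eta_0^{(n)}$ over the compact group $G_n$; equivariance of the face maps together with the affineness of the fibered condition keeps the metrics mutually compatible along the tower, while the averaging commutes with the $S_{n+1}$-actions and therefore preserves the required simplicial symmetry. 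In particular this yields the isometric action of $G_2$ on $(X_2,\eta^{(2)})$ demanded by the definition, completing the plan.
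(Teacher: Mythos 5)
Your proposal is correct and takes essentially the same route as the paper: starting from a groupoid metric supplied by properness, you dual-average over the normalized Haar measures, use $s_{G*}\mu_1=\mu_0=t_{G*}\mu_1$ to show the source and target remain Riemannian submersions onto the $G_0$-averaged base, use $i_{G*}\mu_1=\mu_1$ for the inversion, and handle general $n$-metrics level by level on the nerve, exactly as in the paper's Theorem \ref{Existence2} and Remark \ref{Rmk4}. Your observation that the fibered condition is affine in the cometric (so dual averages stay transverse) is the same fact the paper imports from \cite[Prop. 2.2]{dHF}.
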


Two interesting applications can be derived from the previous existence result. Firstly, we can prove analogous statements to those of the Slice Theorem and the Equivariant Tubular Neighborhood Theorem in our setting. More precisely, we get that:
\begin{theorem*}[\ref{ETubular}]
	Suppose that $\theta$ is a proper $2$-action of a Lie $2$-group $G_1\rightrightarrows G_0$ on a Riemannian groupoid $(X_1\rightrightarrows X_0,\eta)$. Then, for every $x_0\in X_0$ there exist $\epsilon>0$, another groupoid metric $\tilde{\eta}$ and a $2$-equivariant Lie groupoid isomorphism
	$$\Psi: (G_1\times_{\textnormal{Iso}_{G_1}(1_{x_0})}S_{1_{x_0}}\rightrightarrows G_0\times_{\textnormal{Iso}_{G_0}(x_0)}S_{x_0}) \xrightarrow[]{\cong} (\textnormal{Tub}_{\epsilon}^{\tilde{\eta}}(G_1\cdot 1_{x_0})\rightrightarrows \textnormal{Tub}_{\epsilon}^{\tilde{\eta}}(G_0\cdot x_0)).$$
\end{theorem*}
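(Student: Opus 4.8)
The plan is to combine three ingredients: the reduction to an invariant metric furnished by an averaging argument as in Theorem~\ref{Existence2}, the groupoid normal-exponential construction underlying the weak linearization of Proposition~\ref{Lin1}, and the classical equivariant tubular neighborhood theorem applied level by level. First I would use properness to replace $\eta$ by a groupoid metric $\tilde{\eta}$ for which $\theta$ is isometric: since the $2$-action is proper, the isotropy groups $\mathrm{Iso}_{G_0}(x_0)$ and $\mathrm{Iso}_{G_1}(1_{x_0})$ are compact and one can average the given metrics over them (together with a slice-type cutoff, exactly as in the proof of Theorem~\ref{Existence2}) to obtain an invariant Riemannian groupoid metric $\tilde{\eta}=(\tilde{\eta}^{(n)})$ defined near the orbits. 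I record here that, because $\tilde{\eta}$ is a \emph{groupoid} metric, the structural maps $s,t\colon X_1\to X_0$ are Riemannian submersions and the family $\{\tilde{\eta}^{(n)}\}$ is transversally invariant, which is precisely the compatibility that will later let the level-wise constructions assemble into a groupoid morphism.

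Next I would carry out the geometric construction separately on objects and arrows. Set $\mathcal{O}_0=G_0\cdot x_0\subseteq X_0$ and $\mathcal{O}_1=G_1\cdot 1_{x_0}\subseteq X_1$; properness makes these embedded submanifolds with invariant normal bundles $\nu(\mathcal{O}_0)$ and $\nu(\mathcal{O}_1)$. Because $\tilde{\eta}$ is invariant, the normal exponential maps $\exp^{\perp}$ are equivariant, and for small $\epsilon$ they restrict to diffeomorphisms of the $\epsilon$-disk subbundles onto $\mathrm{Tub}^{\tilde{\eta}}_{\epsilon}(\mathcal{O}_0)$ and $\mathrm{Tub}^{\tilde{\eta}}_{\epsilon}(\mathcal{O}_1)$. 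Identifying the slices $S_{x_0}$, $S_{1_{x_0}}$ with the $\epsilon$-balls in the normal fibres at $x_0$ and $1_{x_0}$, the standard tube-to-associated-bundle identification produces equivariant diffeomorphisms
$$\Psi^0\colon G_0\times_{\mathrm{Iso}_{G_0}(x_0)}S_{x_0}\xrightarrow{\cong}\mathrm{Tub}^{\tilde{\eta}}_{\epsilon}(\mathcal{O}_0),\qquad \Psi^1\colon G_1\times_{\mathrm{Iso}_{G_1}(1_{x_0})}S_{1_{x_0}}\xrightarrow{\cong}\mathrm{Tub}^{\tilde{\eta}}_{\epsilon}(\mathcal{O}_1),$$
where the compactness of the isotropy guarantees that the associated-bundle quotients are smooth manifolds and that the two together form a Lie groupoid.

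The crux is to show that $\Psi=(\Psi^1,\Psi^0)$ is a morphism of Lie groupoids, i.e. that $\mathrm{Tub}^{\tilde{\eta}}_{\epsilon}(\mathcal{O}_1)\rightrightarrows\mathrm{Tub}^{\tilde{\eta}}_{\epsilon}(\mathcal{O}_0)$ is an open subgroupoid of $X_1\rightrightarrows X_0$ and that $\Psi^1$, $\Psi^0$ intertwine source, target, unit, inversion and multiplication. Since $\theta$ is a morphism of groupoids and the source of the $2$-group is surjective, one checks $s(\mathcal{O}_1)=t(\mathcal{O}_1)=\mathcal{O}_0$; then the Riemannian-submersion property of $s,t$ for $\tilde{\eta}$ forces them to carry $\nu(\mathcal{O}_1)$ into $\nu(\mathcal{O}_0)$ and to commute with the normal exponential maps, because geodesics normal to $\mathcal{O}_1$ project to geodesics normal to $\mathcal{O}_0$. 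This is exactly the mechanism behind the groupoid linearization of Proposition~\ref{Lin1}, and the same transversal invariance disposes of the unit and multiplication maps, so that $\Psi$ restricts to the asserted groupoid isomorphism. Finally, the $2$-equivariance of $\Psi$ is immediate from the equivariance of $\exp^{\perp}$ together with the naturality of the associated-bundle construction, the relevant isotropy data $\mathrm{Iso}_{G_1}(1_{x_0})\rightrightarrows\mathrm{Iso}_{G_0}(x_0)$ being itself a sub-$2$-group of $G_1\rightrightarrows G_0$.

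I expect the main obstacle to be precisely this compatibility step. Verifying that the level-wise tubes and associated bundles glue into a single equivariant Lie groupoid morphism demands a careful interplay between the Riemannian-submersion property of the structural maps and the normal exponential maps, and it is here that the hypothesis that $\tilde{\eta}$ is a genuine Riemannian groupoid metric—rather than an arbitrary invariant metric on each $X_n$—is indispensable; an invariant metric failing the transversal compatibility would yield equivariant tubes at each level that need not intertwine $s$ and $t$, and hence would not descend to a morphism of the quotient stacks.
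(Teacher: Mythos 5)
Your proposal follows the same overall strategy as the paper's proof: properness gives compact isotropy $2$-groups, averaging over them (Theorem \ref{Existence2} together with Remark \ref{Rmk4}, exactly as invoked inside Proposition \ref{SliceThm}) produces the invariant metric $\tilde{\eta}$; the slices are the images of $\epsilon$-balls under the equivariant normal exponential maps; and your level-wise maps are precisely the paper's $\Psi^j([g_j,f_j])=\theta^j(g_j,f_j)$, whose level-wise equivariant-diffeomorphism property the paper also quotes from the classical tube theorem (Theorem 3.57 in \cite{AB}).

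However, your treatment of the step you yourself single out as the crux --- that $\Psi$ is a morphism of Lie groupoids --- rests on the wrong mechanism. The geodesic/Riemannian-submersion argument you describe (normal exponentials commuting with $s_X,t_X$) is what proves that $S_{1_{x_0}}\rightrightarrows S_{x_0}$ is a Lie subgroupoid, i.e.\ Proposition \ref{SliceThm}, which the theorem takes as input; it cannot by itself show that the action map intertwines the compositions. In the paper this compatibility is a purely algebraic computation with the structural maps of the associated-bundle groupoid from Remark \ref{AssociatedBundle}: the equivariance relations \eqref{Equivariant} give $s_X\circ\Psi^1=\Psi^0\circ s_E$ (and likewise for targets), and compatibility with multiplication is exactly the interchange law \eqref{MultAction}:
$$\Psi^1([g,f]\ast [h,l])=((g\overline{g}^{-1})\ast h)\cdot((\overline{g}f)\ast l)=(gf)\ast (hl)=\Psi^1([g,f])\ast \Psi^1([h,l]).$$
Your phrase ``the same transversal invariance disposes of the unit and multiplication maps'' is therefore not a proof: no metric hypothesis yields this identity; only the fact that $\theta$ is a $2$-action (a groupoid morphism defined on the product groupoid) does. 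Since you have the correct map and the interchange law is available to you, the gap is easily repaired, but as written the decisive verification is justified by an argument that does not establish it.
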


Here $S_{1_{x_0}}\rightrightarrows S_{x_0}$ denotes a Lie subgroupoid slice at $x_0\in X_0$ (see Definition \ref{DefSlice}), which always exists by Proposition \ref{SliceThm} and $\textnormal{Tub}_{\epsilon}^{\tilde{\eta}}(G_1\cdot 1_{x_0})\rightrightarrows \textnormal{Tub}_{\epsilon}^{\tilde{\eta}}(G_0\cdot x_0)$ is the natural Lie groupoid between the tubular neighborhoods of the orbits $G_1\cdot 1_{x_0}$ in $X_1$ and $G_0\cdot x_0$ in $X_0$. Secondly, in Subsection \ref{S:3:1} we define orthogonal Lie $2$-groups (i.e. Lie $2$-group endowed with bi-invariant Riemannian groupoid metrics), thus proving that:
\begin{proposition*}[\ref{Bi-Existence}]
Every Lie $2$-group $G_1\rightrightarrows G_0$ with $G_1$ compact can be equipped with a bi-invariant Riemannian groupoid metric.
\end{proposition*}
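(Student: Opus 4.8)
The plan is to recast bi-invariance as invariance under a single auxiliary $2$-action and then to quote the averaging result of Theorem \ref{Existence2}. Write $\mathbb{G}=(G_1\rightrightarrows G_0)$ for our Lie $2$-group. A Riemannian groupoid metric $\eta$ on $\mathbb{G}$ should be called bi-invariant precisely when it is simultaneously invariant under the left and the right multiplication $2$-actions of $\mathbb{G}$ on itself. Since left and right translations commute, their joint invariance is nothing but invariance under the \emph{product} $2$-group acting by two-sided translations, and this is the reformulation I would build the proof on.

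First I would form the product Lie $2$-group $\mathbb{G}\times\mathbb{G}=(G_1\times G_1\rightrightarrows G_0\times G_0)$, whose arrow space $G_1\times G_1$ is compact because $G_1$ is. I would then define the two-sided translation $2$-action $\theta=(\theta^1,\theta^0)$ of $\mathbb{G}\times\mathbb{G}$ on the groupoid $\mathbb{G}$ by
\[
\theta^1\big((g,h),x\big)=g\,x\,h^{-1},\qquad \theta^0\big((a,b),y\big)=a\,y\,b^{-1},
\]
with the products taken in the ambient Lie groups $G_1$ and $G_0$. The point to verify here is that $(\theta^1,\theta^0)$ is a genuine $2$-action, i.e.\ a morphism of Lie groupoids $(\mathbb{G}\times\mathbb{G})\times\mathbb{G}\to\mathbb{G}$ obeying the action axioms. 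This is immediate from the very definition of a Lie $2$-group: the multiplication and the inversion of $\mathbb{G}$ are morphisms of Lie groupoids (equivalently, the structure maps $s,t,u,m,i$ are group homomorphisms), so their composite $\theta$ intertwines source, target, units and groupoid multiplication, while associativity and unitality of the group operations supply the action axioms.

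It then remains to check the hypotheses of Theorem \ref{Existence2} for $\theta$. The acting $2$-group $\mathbb{G}\times\mathbb{G}$ has compact arrow space $G_1\times G_1$, and the groupoid $\mathbb{G}$ being acted upon is proper, since its anchor $(s,t)\colon G_1\to G_0\times G_0$ is a continuous map out of the compact space $G_1$ and hence proper. Theorem \ref{Existence2} therefore yields a Riemannian groupoid metric $\eta$ on $\mathbb{G}$ for which $\theta$ is isometric. Unwinding what this means at the level of arrows, each map $x\mapsto g\,x\,h^{-1}$ is an isometry of $(G_1,\eta^{(1)})$, and likewise on every nerve level, which is exactly invariance of $\eta$ under both left and right translations, i.e.\ bi-invariance. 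The only genuinely substantive point, and the one I would present with care, is this final identification: that the abstract isometry conclusion of Theorem \ref{Existence2} coincides on the nose with the definition of bi-invariant groupoid metric from Subsection \ref{S:3:1}, including compatibility across all nerve levels $\eta^{(n)}$. Everything else, namely compactness of $G_1\times G_1$, properness of $\mathbb{G}$, and the check that two-sided translation is a bona fide $2$-action, is routine once the product-group reformulation is in place, so the argument is a clean application of the averaging result rather than a new construction.
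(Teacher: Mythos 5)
Your proposal is correct and follows the paper's route: the paper states Proposition \ref{Bi-Existence} as an immediate corollary of the averaging result (Theorem \ref{Existence2}), which is exactly what you invoke. Your reformulation of bi-invariance as invariance under the two-sided translation $2$-action of the compact product $2$-group $G_1\times G_1\rightrightarrows G_0\times G_0$ --- together with the observation that $G_1\rightrightarrows G_0$ is proper (so it carries some groupoid metric to average) and the interchange-law check that two-sided translation is a bona fide $2$-action --- is precisely the bridging detail the paper leaves implicit, and your verification of it is sound.
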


We also describe the infinitesimal counterpart of an orthogonal Lie $2$-group, comparing it with an existing notion of $\mathcal{L}$-orthogonal crossed module of Lie algebras already known in the literature \cite{Ba,FMM}. In Subsection \ref{S:3:2} we present several natural examples and transfer two classical constructions called principal connection warpings and Cheeger deformations into our new framework. We also explain how our notion of isometric $2$-action can be used to develop a $2$-equivariant analogous of the results proved in \cite{OV} which are aimed at extending classical Morse theory to the context of Lie groupoids and their differentiable stacks, see Example \ref{2-EquiMorse}. More precisely, let $f:X_0\to \mathbb{R}$ be a $G_0$-invariant basic function which satisfies a Morse--Bott (normal) nondegenerate condition along its critical submanifolds. Thus, based on the results proved in \cite{OV} we indicate how to get a $2$-equivariant version of the Morse lemma around a $G_0$-invariant nondegenerate critical saturated submanifold of $f$, how to describe the topological behavior of the level subgroupoids of $f$ in a $2$-equivariant way, and also how to construct an equivariant Morse--Bott double cochain complex which computes the equivariant cohomology of a 2-action as defined in \cite{OBT}.

In Section \ref{S:4} we give an infinitesimal description of an isometric Lie $2$-group action. In order to do so we define the Lie 2-group of strong isometries of $(X_1\rightrightarrows X_0,\eta)$ which is associated to a natural crossed module structure $(\mathrm{Iso}(X,\eta),\textnormal{Bis}_{\eta}(X),I,\alpha)$ introduced in Proposition \ref{IsoStrong1}. The infinitesimal counterpart of such a Lie $2$-group of isometries is the Lie $2$-algebra of strong multiplicative Killing vector fields of $(X_1\rightrightarrows X_0,\eta)$ which is the defined by means of the crossed module structure $(\mathfrak{o}_{m}(X),\Gamma_{\eta}(A_X),\delta,D)$ from Proposition \ref{Strongkilling}. By using those crossed module structures we show that:

\begin{theorem*}[\ref{InfinitesimalIsometriesRep}]
	Let $\theta$ be an isometric right 2-action of a Lie $2$-group $G_1\rightrightarrows G_0$ on a Riemannian groupoid $(X_1\rightrightarrows X_0,\eta)$. Denote by $(G,H,\rho,\alpha)$ to the crossed module of Lie groups associated to $G_1\rightrightarrows G_0$ and by $(\mathfrak{g},\mathfrak{h},\partial,\mathcal{L})$ to its corresponding crossed module of Lie algebras. Then:
	\begin{itemize}
		\item there is a morphism of crossed modules of Lie groups $$(\sigma,\Sigma):(G,H,\rho,\alpha) \to (\textnormal{Iso}(X,\eta),\textnormal{Bis}_\eta(X),I,\alpha),$$
		that is defined as $\sigma_h(x)=1_xh$ and $\Sigma_g(p)=p1_g$, and
		\item there is a morphism of crossed modules of Lie algebras $$(j_{-1},j_0):(\mathfrak{g},\mathfrak{h},\partial,\mathcal{L})\to (\mathfrak{o}_{m}(X),\Gamma_{\eta}(A_X),\delta,D),$$
		which is defined as $j_{-1}=\left.\tilde{\xi}\right|_{X_0}$ and  $j_0(\zeta)=(\tilde{1_{\zeta}},\tilde{\zeta})$ for all $\xi \in \mathfrak{h}$ and $\zeta \in \mathfrak{g}$.
	\end{itemize}
\end{theorem*}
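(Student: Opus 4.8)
The plan is to produce the infinitesimal morphism $(j_{-1},j_0)$ by first constructing the morphism of crossed modules of Lie groups $(\sigma,\Sigma)$ and then differentiating it at the identity. Throughout I will use the standard dictionary for the Lie $2$-group $G_1\rightrightarrows G_0$, namely $G=G_0$, $H=\ker(s\colon G_1\to G_0)$, $\rho=\left.t\right|_{H}$ and $\alpha_g(h)=1_gh1_g^{-1}$, together with the fact that an isometric right $2$-action $\theta=(\theta^1,\theta^0)$ consists of right actions of $G_1$ on $X_1$ and of $G_0$ on $X_0$, written by juxtaposition as $ph$ and $xg$, which are compatible with the two groupoid structures and by hypothesis are by isometries of $(X_1,\eta^{(1)})$.

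First I would check that $\sigma$ and $\Sigma$ take values where claimed. For fixed $h\in H$ the assignment $\sigma_h\colon x\mapsto 1_xh$ is a section of the source map, since $\theta$ being a groupoid morphism and $h\in\ker s$ give $s(1_xh)=x$, while $t\circ\sigma_h$ is the base diffeomorphism $x\mapsto x\rho(h)$; hence $\sigma_h$ is a bisection. Similarly $\Sigma_g=(\,p\mapsto p1_g,\ x\mapsto xg\,)$ is a groupoid automorphism, the compatibility $s(p1_g)=s(p)g$ following again from $\theta$ being a morphism of groupoids. The isometric hypothesis enters exactly here: $\Sigma_g$ is the transformation induced by the single element $1_g\in G_1$, hence preserves $\eta^{(1)}$, and $\sigma_h$ is the bisection induced by $h\in G_1$, which places it in $\textnormal{Bis}_\eta(X)$ in the sense fixed by Proposition \ref{IsoStrong1}. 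Next I would verify that $\sigma$ and $\Sigma$ are group homomorphisms; this is a direct computation from the action axioms $p(hh')=(ph)h'$ and the fact that the unit map $g\mapsto 1_g$ is a group homomorphism $G_0\to G_1$, which convert the products $hh'$ and $gg'$ into, respectively, the bisection product of $\sigma_h,\sigma_{h'}$ and the composition of $\Sigma_g,\Sigma_{g'}$ under the group structures of $\textnormal{Bis}_\eta(X)$ and $\textnormal{Iso}(X,\eta)$.

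It then remains to establish the two crossed-module-morphism axioms for $(\sigma,\Sigma)$: the boundary compatibility $I\circ\sigma=\Sigma\circ\rho$ and the equivariance $\sigma_{\alpha_g(h)}=\alpha_{\Sigma_g}(\sigma_h)$, where $I$ and $\alpha$ on the right are those of $(\textnormal{Iso}(X,\eta),\textnormal{Bis}_\eta(X),I,\alpha)$. For the first, I would unwind the inner isometry $I_{\sigma_h}$ attached to the bisection $\sigma_h$ and compare it with $p\mapsto p\,1_{\rho(h)}$, using $\rho=\left.t\right|_{H}$ and that $\theta^1$ is an action. For the second, I would expand the left-hand side using $\alpha_g(h)=1_gh1_g^{-1}$ and the right-hand side using the induced conjugation of isometries on bisections. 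I expect this equivariance identity to be the main obstacle, since it is precisely the point at which the interplay between the group multiplication of $G_1$, the groupoid multiplication of $X_1$ and the action $\theta^1$ must be reconciled with the explicit crossed-module operations of $\textnormal{Iso}(X,\eta)$; all the source/target bookkeeping and the units $1_g$ have to be handled with the conventions of Proposition \ref{IsoStrong1}.

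Finally, for the infinitesimal statement I would differentiate $(\sigma,\Sigma)$ at the identity. For $\zeta\in\mathfrak{g}$ the derivative of $t\mapsto\Sigma_{\exp(t\zeta)}$ is obtained from $\left.\frac{d}{dt}\right|_{t=0}\,p\,1_{\exp(t\zeta)}=\widetilde{1_\zeta}(p)$ and the analogous identity on $X_0$, giving the multiplicative vector field $(\widetilde{1_\zeta},\widetilde\zeta)=j_0(\zeta)$, which lies in $\mathfrak{o}_m(X)$ because its flow is the isometry $\Sigma_{\exp(t\zeta)}$. For $\xi\in\mathfrak{h}$ the derivative of the curve of bisections $t\mapsto\sigma_{\exp(t\xi)}$ is the section $x\mapsto\left.\frac{d}{dt}\right|_{t=0}1_x\exp(t\xi)=\widetilde\xi(1_x)$, which lands in $A_X$ since $\exp(t\xi)\in\ker s$ forces $\widetilde\xi(1_x)\in\ker(ds)$; thus it equals $\left.\widetilde\xi\right|_{X_0}=j_{-1}(\xi)$ and lies in $\Gamma_\eta(A_X)$ by the isometric hypothesis. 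Having identified $j_0=d\Sigma$ and $j_{-1}=d\sigma$ under the Lie $2$-group/Lie $2$-algebra correspondence of Propositions \ref{IsoStrong1} and \ref{Strongkilling}, the Lie-algebra crossed-module axioms $\delta\circ j_{-1}=j_0\circ\partial$ and $j_{-1}\circ\mathcal{L}_\zeta=D_{j_0(\zeta)}\circ j_{-1}$ follow by differentiating the group-level identities already established, so that $(j_{-1},j_0)$ is precisely the infinitesimal morphism induced by $(\sigma,\Sigma)$ via functoriality of the Lie functor.
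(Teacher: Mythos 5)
Your group-level construction is the paper's own route: it is Lemma \ref{2ActionDecomposition} plus Lemma \ref{LemmaIso1}, combined with Lemma \ref{Rmk1} and Proposition \ref{IsoStrong1} to see that $\sigma$ and $\Sigma$ land in $\textnormal{Bis}_\eta(X)$ and $\textnormal{Iso}(X,\eta)$. Your containment checks are the right ones (the base maps are $\iota_{\sigma_h}=\theta^0_{\rho(h)}$ and $\theta^0_g$, isometries of $(X_0,\eta^{(0)})$ by Lemma \ref{Rmk1}), and the two crossed-module identities you defer, $I\circ\sigma=\Sigma\circ\rho$ and $\sigma_{\alpha_g(h)}=\alpha_{\Sigma_g}(\sigma_h)$, are exactly the two short computations carried out in Lemma \ref{LemmaIso1}; deferring them is not a gap.

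The gap is in your final paragraph. You obtain the Lie-algebra axioms ``by differentiating the group-level identities \dots via functoriality of the Lie functor'', invoking a ``Lie $2$-group/Lie $2$-algebra correspondence of Propositions \ref{IsoStrong1} and \ref{Strongkilling}''. No such correspondence is available: those propositions only exhibit $(\textnormal{Iso}(X,\eta),\textnormal{Bis}_\eta(X),I,\alpha)$ and $(\mathfrak{o}_m(X),\Gamma_\eta(A_X),\delta,D)$ as sub-crossed modules of $(\textnormal{Aut}(X),\textnormal{Bis}(X),I,\alpha)$ and $(\mathfrak{X}_m(X),\Gamma(A_X),\delta,D)$, respectively. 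The groups involved are infinite-dimensional, and, more to the point, $\delta$ and $D$ are \emph{defined algebraically} ($\delta(\alpha)=\alpha^r-\alpha^l$ and $D_{(\xi,v)}\alpha=[\xi,\alpha^r]|_{X_0}$), not as derivatives of $I$ and $\alpha$. So differentiating $I_{\sigma_{\exp(t\xi)}}=\Sigma_{\exp(t\partial\xi)}$ tells you that the generator of the flow $t\mapsto I_{\sigma_{\exp(t\xi)}}$ equals $j_0(\partial\xi)$, but you still owe the identification of that generator with $\delta(j_{-1}(\xi))$, and similarly you owe the identification of the derivative of $\alpha_{\Sigma_{\exp(t\zeta)}}$ with $D_{j_0(\zeta)}$. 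Supplying these identifications is precisely the content of the paper's proof (Theorem \ref{InfinitesimalAction}): one computes the flow explicitly,
$$\varphi_t^{\delta(j_{-1}(\xi))}(p)=\sigma_{\exp(t\xi)}(t_X(p))*p*i_X\bigl(\sigma_{\exp(t\xi)}(s_X(p))\bigr)=p1_{\exp(t\partial\xi)},$$
giving $\delta(j_{-1}(\xi))=j_0(\partial\xi)$, and for the second axiom uses that fundamental vector fields of a right action preserve brackets, so that $D_{j_0(\zeta)}(j_{-1}(\xi))=[\tilde{1_{\zeta}},\tilde{\xi}]|_{X_0}=\widetilde{[1_{\zeta},\xi]}|_{X_0}=j_{-1}(\mathcal{L}_\zeta\xi)$. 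These two computations are the actual mathematical content of the infinitesimal statement; as written, your proposal hides them inside a functoriality principle that is not established here. (Note also that the paper's Theorem \ref{InfinitesimalAction} requires no isometry hypothesis at all; isometry enters only through Lemma \ref{Rmk1} and Propositions \ref{IsoStrong1} and \ref{Strongkilling} to cut down to the sub-crossed modules, and that part — your checks that $j_0(\zeta)\in\mathfrak{o}_m(X)$ and $j_{-1}(\xi)\in\Gamma_\eta(A_X)$ — you do handle correctly.)
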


Here the symbol $\tilde{}$ stands to denote the fundamental vector field associated to an element in a Lie algebra with respect to a given Lie group action. It is important to
mention that our infinitesimal description is motivated by those results developed in \cite{OW} in order to describe the Lie 2-algebra structure that the set of multiplicative vector fields on a Lie groupoid has. In Subsection \ref{S:4:1} we weaken the condition for a diffeomorphism to be an isometry by imposing instead a transversal isometric condition along groupoid orbits. This leads us to define the Lie $2$-algebra of weak Killing multiplicative vector fields associated to the crossed module $(\mathfrak{o}_{m}^{\textnormal{w}}(X),\Gamma(A_X),\delta,D)$ which is a well defined object for any Riemannian groupoid $n$-metric. By adapting some of the results proved in \cite{OW} to our context, in Subsection \ref{S:4:2} we get that:
\begin{theorem}[\ref{MoritaTheorem}]
	If $(X_1 \rightrightarrows X_0,\eta^X)$ and $(Y_1 \rightrightarrows Y_0,\eta^Y)$ are Morita equivalent Riemannian groupoids then the crossed modules $(\mathfrak{o}_{m}^{\textnormal{w}}(X),\Gamma(A_X),\delta,D)$ and $(\mathfrak{o}_{m}^{\textnormal{w}}(Y),\Gamma(A_Y),\delta,D)$ are isomorphic in the derived category of crossed modules. In consequence, the following quotient spaces are isomorphic as Lie algebras:
	$$\mathfrak{o}_{m}^{\textnormal{w}}(X)/\textnormal{im}(\delta)\cong \mathfrak{o}_{m}^{\textnormal{w}}(Y)/\textnormal{im}(\delta).$$
\end{theorem}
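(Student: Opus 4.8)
The plan is to follow the strategy used in \cite{OW} to establish the Morita invariance of the Lie $2$-algebra of multiplicative vector fields, and to upgrade it by keeping track of the transversal isometric (weak Killing) condition, which is governed by the transverse metric on the quotient stack and is therefore the genuinely Morita-invariant piece of data. The first step is a reduction: two Riemannian groupoids are Morita equivalent precisely when they fit into a span of weak equivalences $X \xleftarrow{\;\phi\;} Z \xrightarrow{\;\psi\;} Y$ in which $Z$ carries the pullback $n$-metric and $\phi,\psi$ are Riemannian weak equivalences in the sense of \cite{dHF}, so that the induced transverse metrics on the three stacks agree. Since being isomorphic in the derived category of crossed modules is an equivalence relation realized by roofs of quasi-isomorphisms, it suffices to show that each Riemannian weak equivalence $\phi:(Z,\eta^Z)\to (X,\eta^X)$ induces a quasi-isomorphism of crossed modules
$$\Phi_\phi=((\Phi_\phi)_{-1},(\Phi_\phi)_0):(\mathfrak{o}_m^{\textnormal{w}}(Z),\Gamma(A_Z),\delta,D)\longrightarrow (\mathfrak{o}_m^{\textnormal{w}}(X),\Gamma(A_X),\delta,D);$$
the span $X \xLeftarrow{\Phi_\phi} Z \xRightarrow{\Phi_\psi} Y$ of quasi-isomorphisms then gives the desired isomorphism in the derived category.

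Next I would construct $\Phi_\phi$, which I may assume presents $Z$ as the pullback groupoid $Z=\phi^{*}X$ over a surjective submersion $\phi_0:Z_0\to X_0$. On the bottom term I would use the standard identification $A_Z\cong\phi_0^{*}A_X$ of Lie algebroids, which after passing to the transversal quotients induces the comparison $(\Phi_\phi)_{-1}$. On the top term the main subtlety is that multiplicative vector fields are not functorial under weak equivalences in any elementary way; following the technique of \cite{OW} I would instead encode a weak Killing multiplicative vector field as infinitesimally multiplicative ($\mathrm{IM}$) data, for which base change along $\phi$ is well behaved, and let $(\Phi_\phi)_0$ be the resulting map, defined modulo directions tangent to the orbits. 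I would then verify that $((\Phi_\phi)_{-1},(\Phi_\phi)_0)$ is compatible with $\delta$ and intertwines the actions $D$, so that it is a genuine morphism of crossed modules, and---crucially---that it preserves the weak Killing condition: since $\phi$ is a Riemannian weak equivalence, $\eta^Z$ and $\eta^X$ are $\phi$-related on the normal bundles of the orbits, so the transversal infinitesimal isometry condition is matched on both sides.

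It then remains to prove that $\Phi_\phi$ is a quasi-isomorphism, i.e.\ that it induces isomorphisms on the two homology groups
$$H_{-1}=\ker\delta, \qquad H_0=\mathfrak{o}_m^{\textnormal{w}}(X)/\operatorname{im}(\delta).$$
For this I would give an intrinsic, stacky description of both: the degree-zero homology $H_0$ is the Lie algebra of geometric Killing vector fields of the quotient Riemannian stack $[X_0/X_1]$, while the degree-$(-1)$ homology $\ker\delta$ consists of the transversally invariant sections, and both depend only on the Morita class of $(X,\eta^X)$. Because $\phi$ induces an isomorphism of quotient Riemannian stacks identifying the transverse metrics, the maps induced by $\Phi_\phi$ on $H_{-1}$ and $H_0$ are isomorphisms. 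The stated consequence is then immediate: $H_0$ is a functor valued in Lie algebras that is invariant under quasi-isomorphism, so the span above yields $\mathfrak{o}_m^{\textnormal{w}}(X)/\operatorname{im}(\delta)\cong \mathfrak{o}_m^{\textnormal{w}}(Y)/\operatorname{im}(\delta)$ as Lie algebras.

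The main obstacle I expect is the construction and control of the top comparison map $(\Phi_\phi)_0$: multiplicative vector fields do not transport along a weak equivalence directly, and one must either work with the pullback presentation and carefully quotient out the orbit-tangent ambiguity, or reformulate everything in terms of $\mathrm{IM}$ data, where base change is available. Threading the weak Killing condition through this construction---and checking that the transverse metrics supplied by the Riemannian weak equivalence make $(\Phi_\phi)_0$ both well defined and transversally isometric---is where the bulk of the technical work lies, and it is precisely here that the hypothesis that $\eta^X$ and $\eta^Y$ are Morita related is used.
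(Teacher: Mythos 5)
Your overall skeleton --- realize the Morita equivalence by a span of Morita fibrations $X \leftarrow Z \rightarrow Y$, show that each leg induces quasi-isomorphisms of the weak Killing crossed modules, and then read off the Lie algebra isomorphism on degree-zero homology --- is the same as the paper's. The genuine gap is in the central step: you propose to construct a \emph{direct} crossed-module quasi-isomorphism $\Phi_\phi\colon (\mathfrak{o}_m^{\textnormal{w}}(Z),\Gamma(A_Z),\delta,D)\to(\mathfrak{o}_m^{\textnormal{w}}(X),\Gamma(A_X),\delta,D)$, but no such direct morphism exists in general, because neither multiplicative vector fields nor Lie algebroid sections on $Z$ push forward along $\phi$ without a projectability hypothesis. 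Your two proposed workarounds do not repair this. Working ``modulo directions tangent to the orbits'' produces at best maps between the homologies, not morphisms of crossed modules, and an isomorphism in the derived category requires an actual zig-zag of crossed-module quasi-isomorphisms --- isomorphisms between the homology Lie algebras alone do not provide one. The $\mathrm{IM}$-data ``base change'' goes the wrong way: it is a pullback-type operation from $X$ to $Z$, and even in that direction lifting the base vector field along the surjective submersion $Z_0\to X_0$ requires non-canonical choices; moreover this is not the technique of \cite{OW} that you invoke.

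The missing idea --- and what the paper actually does in Lemma \ref{MoritaLemma1}, following \cite[s. 6--7]{OW} --- is to insert the intermediate \emph{sub-crossed module of projectable sections} $(\mathfrak{o}_m^{\textnormal{w}}(Z)^\phi,\Gamma(A_Z)^\phi,\delta,D)$: the inclusion into $(\mathfrak{o}_m^{\textnormal{w}}(Z),\Gamma(A_Z),\delta,D)$ and the pushforward $\phi_\ast$ onto $(\mathfrak{o}_m^{\textnormal{w}}(X),\Gamma(A_X),\delta,D)$ are both honest morphisms of crossed modules, and both are quasi-isomorphisms by restricting Theorem 7.3 and Proposition 7.4 of \cite{OW}. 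The only genuinely Riemannian input needed is the one you correctly identify: the pushforward of a weak Killing field along a Riemannian Morita fibration is again weak Killing, by the same Riemannian-submersion argument as in Lemma \ref{Rmk1}. Thus each leg of the span contributes a roof of \emph{two} quasi-isomorphisms rather than one direct map. A second, smaller omission: a single metric on $Z$ does not make both legs Riemannian simultaneously --- one needs $\eta^Z$ for $Z\to X$ and an equivalent metric $\tilde{\eta}^Z$ for $Z\to Y$ --- so the argument also requires the observation (Lemma \ref{MoritaLemma2}) that equivalent metrics determine the \emph{same} weak Killing crossed module; your assumption that ``the induced transverse metrics on the three stacks agree'' silently uses exactly this fact.
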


The elements of the Lie algebra $\mathfrak{o}_{m}^{\textnormal{w}}(X)/\textnormal{im}(\delta)$ are called geometric Killing vector fields. It is worth mentioning that such a notion of Killing vector field recovers the classical notions of Killing vector field on both a Riemannian manifold and a Riemannian orbifold as defined for instance in \cite{BZ} as well as the notion of transverse Killing vector field on a regular
Riemannian foliation as defined in \cite[p. 84]{Mo}. If $([X_0/X_1],[\eta])$ denotes the quotient Riemannian stack presented by $(X_1 \rightrightarrows X_0,\eta)$ then by using the desingularization theorem for proper Riemannian groupoids proved in \cite{PTW} we show:

\begin{theorem}[\ref{FiniteThm2}]
	Let $(X_1 \rightrightarrows X_0,\eta)$ be a proper Riemannian groupoid with compact orbit space $X_0/X_1$. Then the algebra of geometric Killing vector fields on $([X_0/X_1],[\eta])$ has finite dimension.
\end{theorem}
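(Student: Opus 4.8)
The plan is to reduce, by Morita invariance and desingularization, to the case of a regular Riemannian foliation on a compact manifold, where finiteness of the transverse Killing algebra is classical, and then to transport the bound back along the resolution. By Theorem \ref{MoritaTheorem} the algebra $\mathfrak{o}_m^{\textnormal{w}}(X)/\textnormal{im}(\delta)$ is a Morita invariant; using this together with the compactness of $X_0/X_1$ and the existence of a complete transversal for a proper groupoid, I may first replace $(X_1\rightrightarrows X_0,\eta)$ by a Morita equivalent Riemannian groupoid whose object manifold $X_0$ is compact. Applying the desingularization theorem of \cite{PTW} to this model then produces, after finitely many blow-ups along the compact minimal strata, a proper Riemannian groupoid $(\tilde X_1\rightrightarrows \tilde X_0,\tilde\eta)$ with $\tilde X_0$ compact whose orbit foliation is regular. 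The resolution map $r\colon \tilde X_0\to X_0$ is a diffeomorphism over the dense principal stratum and restricts there to an isometry of the transverse metrics.

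Next I would construct an injective Lie algebra homomorphism $\Phi\colon \mathfrak{o}_m^{\textnormal{w}}(X)/\textnormal{im}(\delta)\to \mathfrak{o}_m^{\textnormal{w}}(\tilde X)/\textnormal{im}(\delta)$ by lifting infinitesimal transversal isometries along $r$. A representative weak Killing multiplicative field on $(X_0,\eta)$ generates a local flow by transverse isometries preserving the orbit-type stratification, and since the blow-up is functorial for such maps this flow lifts to a local flow of transverse isometries of $(\tilde X_0,\tilde\eta)$, whose generator $\tilde V$ is again a weak Killing field. Applying the same functoriality to flows of bisections shows that inner fields lift to inner fields, so the assignment descends to the quotients by $\textnormal{im}(\delta)$ and yields $\Phi$. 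For injectivity, a class in the kernel is represented by a field $V$ whose lift $\tilde V$ is inner on $\tilde X$; since $r$ is an isometric diffeomorphism over the principal stratum, $V$ is then inner there, and one argues that a transversal Killing field which is inner on the dense principal stratum must already lie in $\textnormal{im}(\delta)$.

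It remains to treat the regular model. For $(\tilde X_1\rightrightarrows \tilde X_0,\tilde\eta)$ the quotient $\mathfrak{o}_m^{\textnormal{w}}(\tilde X)/\textnormal{im}(\delta)$ is exactly the algebra of transverse Killing vector fields of the regular Riemannian foliation cut out by the orbits, in the sense recalled above from \cite[p.~84]{Mo}. Since $\tilde X_0$ is compact, Molino's structure theory for Riemannian foliations shows that the transverse isometries form a Lie group, so the transverse Killing fields modulo those tangent to the leaves constitute a finite-dimensional Lie algebra. Since $\Phi$ is injective and its target is finite-dimensional, we conclude $\dim\big(\mathfrak{o}_m^{\textnormal{w}}(X)/\textnormal{im}(\delta)\big)<\infty$, which is the desired statement.

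The main obstacle is the construction and injectivity of $\Phi$ in the second step. One must check that the blow-up of \cite{PTW} is compatible both with the transverse metric and with the crossed-module data defining $\delta$, so that the lifts of weak Killing fields and of inner fields behave as claimed and $\Phi$ descends to the quotient. The injectivity is the genuinely delicate part, since the orbit dimension jumps along the singular strata: knowing that a representative is inner over the dense principal stratum does not formally force it to be inner where the resolution collapses directions. Controlling this propagation across strata---using that the field is a transversal Killing field and that $r$ is an isometry over the principal part---at the level of the crossed-module quotient rather than for individual fields is where the real work lies.
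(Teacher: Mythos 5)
Your overall strategy---resolve via the desingularization of \cite{PTW}, reduce to the regular case, and invoke classical finiteness of transverse Killing algebras---is the same as the paper's, but two of your steps have genuine gaps, and both are avoidable. First, your comparison map goes in the wrong direction. You build a lift $\Phi\colon \mathfrak{o}_{m}^{\textnormal{w}}(X)/\textnormal{im}(\delta)\to \mathfrak{o}_{m}^{\textnormal{w}}(\tilde X)/\textnormal{im}(\delta)$ and then need it to be \emph{injective}, which you correctly flag as the delicate point: a class whose lift is inner need not visibly be inner downstairs, since innerness over the dense principal stratum does not formally propagate across the singular strata where the resolution collapses directions. You leave this unresolved, and it is the crux of your argument. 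The paper never needs it: using the functoriality of the blow-up (\cite[s.~5.2]{PTW} together with Proposition 6.13 and Theorem 6.14 there), every weak multiplicative Killing field on $(X_1,\eta)$ lifts to one on $(\tilde X_1,\tilde\eta)$ by lifting the one-parameter family of groupoid automorphisms given by its flow, and this exhibits a \emph{surjective} algebra homomorphism $[\pi]\colon \mathfrak{o}([\tilde X_0/\tilde X_1],[\tilde\eta])\to \mathfrak{o}([X_0/X_1],[\eta])$. Surjectivity onto the algebra you want to bound, plus finite-dimensionality upstairs, finishes the proof; no injectivity statement about strata is ever required. (Your preliminary Morita reduction to a compact object manifold is likewise both unjustified in general and unnecessary: the paper only needs compactness of the orbit space, which passes to $\tilde X_0/\tilde X_1$ because $\pi$ is proper and surjective.)

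Second, your treatment of the regular model is incorrect as stated. A proper \emph{regular} groupoid is not a foliation groupoid: its isotropy groups can be positive-dimensional, so $\mathfrak{o}_{m}^{\textnormal{w}}(\tilde X)/\textnormal{im}(\delta)$ is \emph{not} ``exactly the algebra of transverse Killing vector fields'' of the orbit foliation in the sense of \cite[p.~84]{Mo}; the isotropy contributes an extra piece that Molino's theory does not see. This is exactly why the paper invokes Moerdijk's classification of regular groupoids \cite{Moe} to fit $\tilde X_1$ into an extension $K\to \tilde X_1\to E$ over $\tilde X_0$, with $K$ a bundle of connected Lie groups and $E$ a foliation groupoid, yielding a short exact sequence of algebras whose outer terms are handled separately: the foliation-groupoid quotient is finite-dimensional by the complete-transversal argument of Lemma \ref{Finite1} (Morita reduction to an \'etale groupoid with finitely many transversal components, then \cite[p.~238]{KN}), and the kernel term is finite-dimensional again by \cite{KN}. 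Your plan needs to be amended to account for this isotropy term; with that extension argument in place, and with the comparison map replaced by the surjection $[\pi]$, your outline becomes the paper's proof.
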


Finally, by averaging with respect to a proper Haar measure system for $(X_1 \rightrightarrows X_0,\eta)$ we prove that there exists a projection from what we call the space of projectable vector fields of Killing type to the space of weak multiplicative Killing vector fields, see Theorem \ref{ExistenceWeak}. Hence, motivated by some results proved in \cite{CMS} we give another face to the space of geometric Killing vectors fields by using Killing invariant sections, thus obtaining that Theorem \ref{ExistenceWeak} provides us with a method to construct geometric Killing vector fields over the quotient Riemannian stack $([X_0/X_1],[\eta])$.

\medskip

{\bf Acknowledgments:} We would like to thank Mateus de Melo, Cristian Ortiz and Luca Vitagliano for several enlightened discussions, comments, and suggestions which allowed us to improve this work. Valencia is grateful to Matias del Hoyo for inviting and supporting him to visit IMPA in Rio de Janeiro in December, 2022. Their mathematical discussions during the academic visit encouraged the authors to start enhancing the presentation and the results stated in the first version of the present work. We are also thankful to the anonymous referees who provided many suggestions and corrections that improved the quality of our results. Herrera-Carmona was supported by the Brazilian Federal Foundation CAPES - Finance Code 001. Valencia was supported by Grant 2020/07704-7 Sao Paulo Research Foundation - FAPESP.

\section{Preliminaries}\label{S:2}
In this short section we briefly introduce the basics on Lie groupoids we shall be using throughout. For specific details the reader is recommended to visit for instance \cite{BD,BS,dH,dHF,Ma}. 

A \emph{Lie groupoid} $X_1\rightrightarrows X_0$ consists of a manifold $X_0$ of objects and a manifold $X_1$ of arrows, two surjective submersions $s_X,t_X:X_1\to X_0$ respectively indicating the source and the target of the arrows, and a smooth associative composition $m_X:X_2\to X_1$ over the set of composable arrows $X_2=X_1\times_{X_0} X_1$, admitting unit $u_X:X_0\to X_1$ and inverse $i_X:X_1\to X_1$, subject
to the usual groupoid axioms. The collection of maps mentioned above are called \emph{structural maps} of the Lie groupoid $X_1\rightrightarrows X_0$. We shall drop up the sub-index notation for the structural maps only if there is no risk of confusion. Special instances of Lie groupoids are given by manifolds, Lie groups, Lie group actions, surjective submersions, foliations, pseudogroups, principal bundles, vector bundles, among others \cite{dH, Ma}.

Let $X_1\rightrightarrows X_0$ be a Lie groupoid. For each $x\in X_0$, its \emph{isotropy group} $X_x:=s_X^{-1}(x)\cap t_X^{-1}(x)$ is a Lie group and an embedded submanifold in $X_1$. There is an equivalence relation in $X_0$ defined by $x\sim y$ if there exists $p\in X_1$ with $s_X(p)=x$ and $t_X(p)=y$. The corresponding equivalence class of $x\in X_0$ is denoted by $\mathcal{O}_x\subseteq X_0$ and called the \emph{orbit} of $x$. The previous equivalence relation defines a quotient space $X_0/X_1$ called the \emph{orbit space} of $X_1\rightrightarrows X_0$.  This space equipped with the quotient topology is in general a \emph{singular space}, that is, it does not carry a differentiable structure making the quotient projection $X_0\to X_0/X_1$ a surjective submersion. 

A \emph{Lie groupoid morphism} between two Lie groupoids $X_1\rightrightarrows X_0$ and $X_1'\rightrightarrows X_0'$ is a pair $\phi:=(\phi^1,\phi^0)$ where $\phi^1:X_1\to X_1'$ and $\phi^0:X_0\to X_0'$ are smooth maps commuting with both source and target maps and preserving the composition maps. The \emph{tangent groupoid} $TX_1\rightrightarrows TX_0$ is obtained by applying the tangent functor at both manifolds of arrows and objects and all of its structural maps. If $S \subset X_0$ is a saturated manifold (only an orbit for instance) then we can restrict the groupoid structure to $X_{S}=s_X^{-1}(S)=t_X^{-1}(S)$, thus obtaining a Lie subgroupoid $X_S\rightrightarrows S$ of $X_1\rightrightarrows X_0$. Furthermore, the Lie groupoid structure of $TX_1\rightrightarrows TX_0$ can be restricted to define a new Lie groupoid $\nu(X_S)\rightrightarrows \nu(S)$ having the property that all of its structural maps are fiberwise isomorphisms. The \emph{Lie algebroid} associated to the Lie groupoid $X_1\rightrightarrows X_0$ is defined to be the vector bundle $A_X:=\textnormal{ker}(ds_X)|_{X_0}\subset TX_1$ with anchor map $\rho:A_X\to TX_0$ obtained by restricting $dt_X:TX_1\to TX_0$ to $A_X$.

\subsection{Riemannian groupoids}
The notion of Riemannian metric on a Lie groupoid that we will be dealing with along the paper was introduced in \cite{dHF} (see also \cite{GGHR,PPT}). Such a notion is compatible with the groupoid composition so that it plays an important role in several parts of our work. We start by recalling that a submersion $\pi:(E,\eta^E)\to B$ with $(E,\eta^E)$ a Riemannian manifold is said to be \emph{Riemannian} if the fibers of it are equidistant (transverse condition). In this case the base $B$ gets an induced metric $\eta^B:=\pi_\ast \eta^E$ for which the linear map $d\pi(e):(\textnormal{ker}(d\pi(e)))^\perp\to T_{\pi(e)}B$ is an isometry for all $e\in E$. If $(\eta^{E})^\ast$ denotes the dual metric associated to $\eta^{E}$ then the condition for a Riemannian submersion can be rephrased as follows. For all $e\in E$ the map  $d\pi(e)^\ast: T_{\pi(e)}^\ast B \to \textnormal{ker}(d\pi(e))^\circ$ is an isometry, where $\textnormal{ker}(d\pi(e))^\circ$ denotes the annihilator of the vectors tangent to the fiber. If $\pi:E\to B$ is a surjective submersion then a Riemannian metric $\eta^E$ on $E$ is said to be \emph{transverse} to $\pi$ if for all $x\in B$ and all $e_1,e_2\in \pi^{-1}(x)$ we have that the map
$$d\pi(e_1)^\ast\circ (d\pi(e_2)^\ast)^{-1}:\textnormal{ker}(d\pi(e_2))^\circ\to T_x^\ast B\to  \textnormal{ker}(d\pi(e_1))^\circ,$$
is a linear isometry. In this case, there exists a unique metric $\eta^B$ on $B$ such that $\pi$ becomes a Riemannian submersion. Such a metric is defined by the expression $\eta^B(d\pi(v), d\pi(w)) := \eta^E(v, w)$ for $v, w \in \textnormal{ker}(d\pi)^\perp$ and is called the \emph{push-forward metric}. The notation we shall be using for the previous Riemannian metric is $\eta^B:=\pi_\ast \eta^E$.

It is well known that given a Lie groupoid $X_1\rightrightarrows X_0$ every pair of composable arrows in $X_2$ may be identified with an element in the space of commutative triangles so that it admits an action of $S_3$ determined by permuting the vertices of such triangles. In these terms, a \emph{Riemannian groupoid} is a pair $(X_1\rightrightarrows X_0,\eta)$ where $X_1\rightrightarrows X_0$ is a Lie groupoid and $\eta=\eta^{(2)}$ is a Riemannian metric on $X_2$ that is invariant by the $S_3$-action and transverse to the composition map $m_X:X_2\to X_1$. The metric $\eta^{(2)}$ induces metrics $\eta^{(1)}=((\pi_2)_X)_\ast \eta^{(2)}=(m_X)_\ast \eta^{(2)}=((\pi_1)_X)_\ast \eta^{(2)}$ on $X_1$ and $\eta^{(0)}=(s_X)_\ast \eta^{(1)}=(t_X)_\ast \eta^{(1)}$ on $X_0$ such that $(\pi_2)_X, m_X, (\pi_1)_X:X_2\to X_1$ and $s_X,t_X:X_1\to X_0$ are Riemannian submersions and $i_X:X_1\to X_1$ is an isometry. This is because the $S_3$-action permutes these face maps.

The metric $\eta^{(j)}$, for $j=2,1,0$, is called a $j$-\emph{metric}. It is important to mention that every proper groupoid can be endowed with a $2$-metric (more generally, an $n$-metric as defined below) and if a Lie groupoid admits a $2$-metric then it is weakly linearizable around any saturated submanifold. For more details visit \cite{dHF}.
\begin{remark}
	The notion of \emph{$n$-metric} on Lie groupoids for $n\geq 3$ was introduced in \cite{dHF}. This is just a Riemannian metric on the set of $n$-composable arrows $X_n$ that is invariant by the canonical $S_{n+1}$-action on $X_n$ and transverse to one (hence to all) face map $X_n\to X_{n-1}$. We can push this $n$-metric forward with the different face maps $X_n\to X_{n-1}$ to define an $(n-1)$-metric on $X_{n-1}$ in such a way these face maps become Riemannian submersions. One can use this process to obtain $r$-metrics $\eta^{(r)}$ on $X_{r}$ for all $0\leq r\leq n-1$ so that we get Riemannian submersions $(X_{r},\eta^{(r)})\to (X_{r-1},\eta^{(r-1)})$.
\end{remark}

\subsection{Lie 2-groups}
A \emph{Lie 2-group} is a group internal to the category of Lie groupoids. In other words, a Lie 2-group is a Lie groupoid $G_1\rightrightarrows G_0$ where both $G_1$ and $G_0$ are Lie groups and the structural maps of $G_1\rightrightarrows G_0$ are Lie group homomorphisms \cite{BD,BS}.  We will denote by $\ast$ the composition of arrows in $G_1\rightrightarrows G_0$ and by $\cdot$ the product of arrows in $G_1$. For instance, the fact that the multiplication is a morphism of groups amounts to the identity:
\begin{equation}\label{MultiProduct}
	(g_1\ast g_2)\cdot(g_1'\ast g_2')=(g_1\cdot g_1')\ast(g_2\cdot g_2'),\quad \forall (g_1,g_2),(g_1',g_2')\in G_2,
\end{equation}
where $G_2$ is the space of pairs of composable arrows of $G_1\rightrightarrows G_0$.

It is well known that there are several alternative ways to think of Lie 2-groups. One of them is described in terms of crossed modules. Recall that a \emph{crossed module of Lie groups} is a quadruple $(G,H,\rho,\alpha)$ consisting of a Lie group homomorphism $\rho:H\to G$ together with an action $\alpha$ of $G$ on $H$, $(g,h)\mapsto g h$, by Lie group automorphisms such that:
\[ \rho(g h)=g \rho(h) g^{-1},\quad \rho(h)h'=h h' h^{-1}, \quad g\in G,\ h,h'\in H. \]

To such a crossed module one associates a Lie $2$-group with objects the Lie group $G_0:=G$, arrows the semi-direct product $G_1:=H\rtimes G$ so that:
\[ (h_1,g_1)\cdot (h_2,g_2):=(h_1 (g_1h_2), g_1 g_2), \]
and structure maps:
\[ s_G(h,g)=g,\quad t_G(h,g)=\rho(h)g, \quad (h_1,\rho(h_2)g_2)\ast (h_2,g_2)=(h_1h_2,g_2). \]
Conversely, any Lie 2-group $G_1 \rightrightarrows G_0$ has an associated crossed module of Lie groups $(G,H,\rho,\alpha)$, where $G=G_0$, $H=\textnormal{ker}(s_G)$, $\rho:=t_G|_H:H\to G$ and $G$ acts on $H$ by conjugation via the identity bisection: $g h:=1_g\cdot h\cdot 1_{g^{-1}}$. This defines an equivalence of categories between the category of Lie 2-groups and the category of crossed modules of Lie groups \cite{BD}.

A \emph{Lie 2-algebra} is a Lie algebra internal to the category of Lie groupoids. More concretely, a Lie 2-algebra is a Lie groupoid $\mathfrak{g}_1\rightrightarrows \mathfrak{g}_0$ where both $\mathfrak{g}_1$ and $\mathfrak{g}_0$ are Lie algebras and the structural maps of $\mathfrak{g}_1\rightrightarrows \mathfrak{g}_0$ are Lie algebra homomorphisms. As expected, the Lie functor provides a correspondence between Lie 2-groups and Lie 2-algebras.

The infinitesimal object associated to a crossed module of Lie groups is the so-called \emph{crossed module of Lie algebras}. This is a quadruple $(\mathfrak{g},\mathfrak{h},\partial,\mathcal{L})$ where $\mathfrak{g}$ and $\mathfrak{h}$ are Lie algebras and $\partial:\mathfrak{h}\to\mathfrak{g}$ and $\mathcal{L}:\mathfrak{g}\to\textnormal{Der}(\mathfrak{h})$ are Lie algebra homomorphisms verifying
\begin{equation}\label{PeifferLieAlgebra}
\partial(\mathcal{L}_xy)=[x,\partial(y)]_\mathfrak{g}\quad\textnormal{and}\quad \mathcal{L}_{\partial(x)}y=[x,y]_\mathfrak{h}.
\end{equation}

A Lie $2$-algebra $\mathfrak{g}_1\rightrightarrows \mathfrak{g}_0$ has an associated crossed module given by the data $\mathfrak{h}=\textnormal{ker}(s)$, $\mathfrak{g}=\mathfrak{g}_0$, $\partial=t|_\mathfrak{h}$ and $\mathcal{L}_x=\textnormal{ad}^{1}_{u(x)}$ for all $x\in \mathfrak{g}$. Conversely, a crossed module of Lie algebras $(\mathfrak{g},\mathfrak{h},\partial,\mathcal{L})$ has an associated Lie $2$-algebra which in turn is given by the data $\mathfrak{g}_1=\mathfrak{h}\rtimes \mathfrak{g}$ with Lie algebra structure provided by the semi-direct product with respect to $\mathcal{L}$, $\mathfrak{g}_1=\mathfrak{g}$, and structural maps
$$s(x,y)=y,\quad t(x,y)=\partial(x)+y,\quad u(y)=(0,y),\quad i(x,y)=(-x,y+\partial(x)), $$
$$m((x',y+\partial(x)),(x,y))=(x+x',y).$$

A \emph{morphism} of crossed modules  $f:(\mathfrak{g},\mathfrak{h},\partial,\mathcal{L})\to (\mathfrak{g}',\mathfrak{h}',\partial',\mathcal{L}')$ consists of two Lie algebra homomorphisms $f_0:\mathfrak{g}\to \mathfrak{g}'$ and $f_1:\mathfrak{h}\to \mathfrak{h}'$ such that $f_0\circ \partial =\partial'\circ f_1$ and $f_1(\mathcal{L}_xy)=\mathcal{L}'_{f_0(x)}(f_1(y))$. This induces a pair of Lie algebra homomorphisms $\textnormal{ker}(\partial)\to \textnormal{ker}(\partial')$ and $\textnormal{coker}(\partial)\to \textnormal{coker}(\partial')$. A morphism of crossed modules is a \emph{quasi-isomorphism} if both of these Lie algebra morphisms are isomorphisms. The \emph{derived category} of crossed modules of Lie algebras is defined to be the localization of the category of crossed modules of Lie algebras obtained by inverting all quasi-isomorphisms.

\section{Isometric Lie 2-group actions}\label{S:3}
In the sequel we shall denote by $G_1\rightrightarrows G_0$ a Lie 2-group and by $(X_1\rightrightarrows X_0,\eta)$ a Riemannian groupoid. A \emph{left 2-action} of $G_1\rightrightarrows G_0$ on $X_1\rightrightarrows X_0$ is defined to be a Lie groupoid morphism $\theta=(\theta^1,\theta^0):(G_1\times X_1\rightrightarrows G_0\times X_0)\to(X_1\rightrightarrows X_0)$ such that both maps $\theta^1$ and $\theta^0$ are usual left Lie group actions. Here $G_1\times X_1\rightrightarrows G_0\times X_0$ denotes the product Lie groupoid. We shall say that $\theta$ is a free (resp. proper) $2$-action if both $\theta^1$ and $\theta^0$ are free (resp. proper) actions. Note that given a left 2-action of $G_1\rightrightarrows G_0$ on $X_1\rightrightarrows X_0$ we immediately get that the structural maps of $X_1\rightrightarrows X_0$ are equivariant with respect the structural maps of $G_1\rightrightarrows G_0$. More precisely, if $p\in X_1, x\in X_0$ and $g\in G_1, g_0\in G_0$ then we obtain that
\begin{equation}\label{Equivariant}
s_X(gp)=s_G(g)s_X(p),\quad t_X(gp)=t_G(g)t_X(p),\quad u_X(g_0x)=u_G(g_0)u_X(x).
\end{equation}

Moreover, we have that the action on arrows is \emph{multiplicative}, meaning that for pairs of composable arrows $(p,q)\in X_2$ and $(g,h)\in G_2$ the following formula holds true
\begin{equation}\label{MultAction}
	(p*q)(g*h)=(pg)*(qh).
\end{equation} 

Here we are denoting by $m_X(p,q)=p*q$ and $m_G(g,h)=g*h$. It is clear that $G_2$ is also a Lie group with the structure induced from the direct product $G_1\times G_1$ and that the $2$-action $\theta$ indices a canonical left action $\theta^2$ of $G_2$ on $X_2$.

\begin{definition}\label{MainDefinition}
A $2$-action of $G_1\rightrightarrows G_0$ on $(X_1\rightrightarrows X_0,\eta)$ is said to be \emph{isometric} if $G_2$ acts by isometries on $(X_2,\eta^{(2)})$.
\end{definition}
An immediate consequence of the previous definition is the following.
\begin{lemma}\label{Rmk1}
The action of $G_1$ on $(X_1,\eta^{(1)})$ is by isometries. Consequently, the action of $G_0$ on $(X_0,\eta^{(0)})$ is also by isometries.
\end{lemma}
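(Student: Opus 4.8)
The plan is to exploit that each metric in the tower $\eta^{(2)},\eta^{(1)},\eta^{(0)}$ is the push-forward of the preceding one along a Riemannian submersion, combined with the equivariance of the structural maps recorded in \eqref{Equivariant} and \eqref{MultAction}. The engine of the argument is an elementary descent principle: if $\pi\colon(E,\eta^E)\to(B,\pi_\ast\eta^E)$ is a Riemannian submersion and $F\colon E\to E$ is an isometry covering a diffeomorphism $\bar F\colon B\to B$ (i.e. $\pi\circ F=\bar F\circ\pi$), then $\bar F$ is an isometry of $(B,\pi_\ast\eta^E)$. To see this, note that because $F$ covers $\bar F$ its differential preserves the vertical distribution $\ker(d\pi)$; being an isometry it then also preserves the horizontal distribution $\ker(d\pi)^\perp$ and restricts to a linear isometry there. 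Since the push-forward metric is characterized by $d\pi(e)\colon(\ker d\pi(e))^\perp\to T_{\pi(e)}B$ being a linear isometry for every $e$, a one-line computation transports the isometry property from $F$ to $\bar F$.

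First I would establish the statement for $G_1$. The induced action of $G_2$ on $X_2$ is the componentwise one, $\theta^2_{(g,h)}(p,q)=(gp,hq)$, which indeed lands in $X_2$ thanks to the equivariance of $s_X$ and $t_X$ in \eqref{Equivariant}. The first face map $(\pi_1)_X\colon X_2\to X_1$, $(p,q)\mapsto p$, is then equivariant along the homomorphism $G_2\to G_1$, $(g,h)\mapsto g$, that is $(\pi_1)_X\circ\theta^2_{(g,h)}=\theta^1_g\circ(\pi_1)_X$. Because $\eta^{(1)}=((\pi_1)_X)_\ast\eta^{(2)}$ with $(\pi_1)_X$ a Riemannian submersion, and $\theta^2_{(g,h)}$ is an isometry of $(X_2,\eta^{(2)})$ by hypothesis covering $\theta^1_g$, the descent principle gives that $\theta^1_g$ is an isometry of $(X_1,\eta^{(1)})$. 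Since every $g\in G_1$ is the first component of the composable pair $(g,1_{s_G(g)})$, this exhausts $G_1$.

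The asserted consequence is the same argument one level down. The equivariance $s_X(gp)=s_G(g)\,s_X(p)$ in \eqref{Equivariant} means that $s_X\colon X_1\to X_0$ intertwines $\theta^1_g$ and $\theta^0_{s_G(g)}$; since $\eta^{(0)}=(s_X)_\ast\eta^{(1)}$ with $s_X$ a Riemannian submersion and $\theta^1_g$ was just shown to be an isometry of $(X_1,\eta^{(1)})$, the descent principle yields that $\theta^0_{s_G(g)}$ is an isometry of $(X_0,\eta^{(0)})$. Surjectivity of the source homomorphism $s_G$ (for instance $g_0=s_G(1_{g_0})$) then covers all of $G_0$.

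I do not anticipate a real obstacle: the result is bookkeeping that combines equivariance of the face maps with the universal property of push-forward metrics. The only step meriting care is the descent principle, precisely the verification that an isometry covering a base map respects the horizontal--vertical splitting; this is standard and immediate from the characterization of $\pi_\ast\eta^E$ recalled in the preliminaries. One could equally run the first step through the composition $m_X$ rather than $(\pi_1)_X$, using multiplicativity \eqref{MultAction} together with surjectivity of $m_G$, since all three face maps induce the same metric $\eta^{(1)}$.
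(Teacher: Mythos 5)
Your proposal is correct and follows essentially the same route as the paper: the paper's proof is exactly this descent argument, carried out explicitly for $s_X\colon X_1\to X_0$ (lifting tangent vectors to horizontal vectors, using equivariance of $s_X$ and the fact that an equivariant isometry preserves the horizontal distribution $\ker(ds_X)^{\perp_{\eta^{(1)}}}$), with the $G_2\Rightarrow G_1$ step handled "similarly" via the face maps $(\pi_1)_X, m_X, (\pi_2)_X\colon X_2\to X_1$. Your only difference is organizational: you isolate the descent principle as a standalone statement and invoke it twice, whereas the paper inlines the computation.
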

\begin{proof}
For simplicity we shall verify that if $G_1$ acts on $(X_1,\eta^{(1)})$ by isometries then $G_0$ acts on $(X_0,\eta^{(0)})$ by isometries as well. Let $v,w\in T_{x}X_0$ and $g_0\in G_0$. It is clear that there are $\tilde{v},\tilde{w}\in \textnormal{ker}(d(s_X)_p)^{\perp_{\eta^{(1)}}}$ with $s_X(p)=x$ such that $d(s_X)_p(\tilde{v})=v$ and $d(s_X)_p(\tilde{w})=w$ and there is $g\in G_1$ such that $s_G(g)=g_0$. Therefore,
\begin{eqnarray*}
(\theta^0_{g_0})_x^\ast\eta^{(0)}(v,w) & = & \eta^{(0)}_{g_0x}(d(\theta^0_{g_0})_x(v),d(\theta^0_{g_0})_x(w))=\eta^{(0)}_{g_0x}(d(\theta^0_{g_0}\circ s_X)_p(\tilde{v}),d(\theta^0_{g_0}\circ s_X)_p(\tilde{w}))\\
\star& = & \eta^{(0)}_{g_0x}(d(s_X\circ \theta^1_{g})_p(\tilde{v}),d(s_X\circ \theta^1_{g})_p(\tilde{w}))=\eta^{(1)}_{gp}(d(\theta^1_{g})_p(\tilde{v}),d(\theta^1_{g})_p(\tilde{w}))\\
& = & \eta^{(1)}_{p}(\tilde{v},\tilde{w})=\eta^{(0)}_{x}(v,w).
\end{eqnarray*}
In the equality $\star$ above we used the fact that $s_X$ is a Riemannian submersion and that the action $\theta^1$ preserves the horizontal distribution $\textnormal{ker}(ds_X)^{\perp_{\eta^{(1)}}}$. Note that this computation does not depend on the choice of $\tilde{v},\tilde{w},p$ and $g$. Furthermore, we may obtain the same conclusion by choosing $\eta^{(0)}=(t_X)_\ast \eta^{(1)}$ instead of $\eta^{(0)}=(s_X)_\ast \eta^{(1)}$. The fact that $G_1$ acts on $(X_1,\eta^{(1)})$ by isometries provided that $G_2$ acts on $(X_2,\eta^{(2)})$ by isometries can be similarly shown by using that $\eta^{(1)}=(\pi_{2,X})_\ast \eta^{(2)}=(m_X)_\ast \eta^{(2)}=(\pi_{1,X})_\ast \eta^{(2)}$ and that $\pi_{2,X}, m_X, \pi_{1,X}:X_2\to X_1$ are Riemannian submersions verifying analogous equivariant relations as those in Equation \eqref{Equivariant} with respect to the maps $\pi_{2,G}, m_G, \pi_{1,G}:G_2\to G_1$.
\end{proof}

\begin{remark}\label{Rmk2}
More generally, by using exactly the same arguments as in Lemma \ref{Rmk1} it is simple to check that if $\eta^{(n)}$ is an $n$-metric on $X_n$ and the induced left action $\theta^n$ of $G_n$ on $(X_n,\eta^{(n)})$ is by isometries then the action $\theta^k$ of $G_k$ on $(X_k,\eta^{(k)})$ will be by isometries for all $0\leq k\leq n-1$.

\end{remark}

We warn the reader that for the sake of simplicity at several stages of our study will be enough to consider only $1$-metrics $\eta^{(1)}$ on $X_1$ so that in those cases we proceed by supposing that $G_1\rightrightarrows G_0$ acts isometrically on $(X_1\rightrightarrows X_0,\eta)$ if $G_1$ acts by isometries on $(X_1,\eta^{(1)})$. This is because the computations as well as the arguments will be both canonical and analogous if we consider $2$-metrics or even $n$-metrics in general. Indeed, most of the notions and results we will introduce below shall have an analogous statement if the simplicial approach from Remark \ref{Rmk2} is considered.

Two interesting consequences that come up from our definition of isometric Lie $2$-group action are the following. Firstly, we can pass to the quotient groupoid metrics by free and proper isometric $2$-actions in a natural fashion, thus obtaining examples of Riemannian groupoid submersions as defined in \cite[Def. 3.2.1]{dHF2}.

\begin{proposition}\label{PQuotient}
	If $\theta$ is a free and proper isometric $2$-action of $G_1\rightrightarrows G_0$ on $(X_1\rightrightarrows X_0,\eta)$ then there is a structure of Riemannian groupoid $(X_1/G_1\rightrightarrows X_0/G_0, \overline{\eta})$ so that the canonical projection $\pi=(\pi_1,\pi_0):(X_1\rightrightarrows X_0)\to (X_1/G_1\rightrightarrows X_0/G_0)$ becomes a Riemannian groupoid submersion.
\end{proposition}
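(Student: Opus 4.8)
The plan is to construct the quotient in three passes---first as a Lie groupoid, then as a Riemannian groupoid via a push-forward of the level-$2$ metric, and finally to read off that $\pi$ is a Riemannian submersion---reducing every geometric point to the behaviour of $\eta^{(2)}$ under the $G_2$-quotient. First I would produce the underlying Lie groupoid. Since $\theta$ is free and proper, so are $\theta^0$ and $\theta^1$, whence $X_0/G_0$ and $X_1/G_1$ are smooth manifolds and $\pi_0,\pi_1$ are principal bundle projections. The same holds one level up: $G_2=\{(g,h):s_G(g)=t_G(h)\}$ acts on $X_2=\{(p,q):s_X(p)=t_X(q)\}$ by the restriction of $\theta^1\times\theta^1$, which by \eqref{Equivariant} preserves $X_2$ and is again free and proper, so $X_2/G_2$ is a manifold. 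The equivariance relations \eqref{Equivariant} and the multiplicativity \eqref{MultAction} make $\pi_0\circ s_X$, $\pi_0\circ t_X$, $\pi_1\circ u_X$, $\pi_1\circ i_X$ and $\pi_1\circ m_X$ constant along the relevant orbits, so all structural maps descend; combined with the canonical identification $X_2/G_2\cong (X_1/G_1)_2$ (whose surjectivity uses that $s_G$ is a surjective submersion, allowing one to move representatives into a composable position) this yields a Lie groupoid $X_1/G_1\rightrightarrows X_0/G_0$ for which $\pi=(\pi_1,\pi_0)$ is a morphism.

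Next I would transport the metric. Because $\theta$ is isometric, $\theta^2$ is a free, proper, isometric action of $G_2$ on $(X_2,\eta^{(2)})$, so the orbit projection $q\colon X_2\to X_2/G_2$ carries a unique push-forward metric $\overline{\eta}\ce q_\ast\eta^{(2)}$ making $q$ a Riemannian submersion. Under $X_2/G_2\cong (X_1/G_1)_2$ this $\overline{\eta}$ is the candidate $2$-metric, and it remains to check $S_3$-invariance and transversality to the quotient composition $\overline{m}$. For invariance, each $\sigma\in S_3$ acts on $X_2$ by a $G_2$-equivariant diffeomorphism---the $S_3$-action being assembled from the $\theta$-equivariant structural maps---so it descends to the canonical $S_3$-action on $X_2/G_2$; since $\eta^{(2)}$ is $S_3$-invariant and $q$ is $S_3$-equivariant, $\overline{\eta}$ is $S_3$-invariant.

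The hard part is transversality of $\overline{\eta}$ to $\overline{m}$, that is, that $\overline{m}\colon (X_2/G_2,\overline{\eta})\to (X_1/G_1,\overline{\eta}^{(1)})$ is a Riemannian submersion for the induced $1$-metric. I would deduce it from a cancellation principle: if $f=h\circ g$ with $g$ and $f$ both Riemannian submersions for one and the same total-space metric, then $h$ is automatically a Riemannian submersion with $h_\ast(g_\ast\eta)=f_\ast\eta$. The proof is elementary: $\ker dg\subseteq\ker df$ forces the $f$-horizontal space to sit inside the $g$-horizontal space, and $dg$ then maps it isometrically onto the $h$-horizontal space, with $h$-verticals pulling back to $f$-verticals. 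I apply this to the commuting square $\pi_1\circ m_X=\overline{m}\circ q$ with $g=q$ and $f=\pi_1\circ m_X$: the map $f$ is a composite of Riemannian submersions---$m_X$ is one by transversality of $\eta^{(2)}$, and $\pi_1$ is one for $\overline{\eta}^{(1)}\ce(\pi_1)_\ast\eta^{(1)}$, which is legitimate because $\eta^{(1)}$ is $G_1$-invariant by Lemma \ref{Rmk1}---hence itself a Riemannian submersion. Cancellation then gives that $\overline{m}$ is a Riemannian submersion, equivalently that $\overline{\eta}$ is transverse to $\overline{m}$, so $(X_1/G_1\rightrightarrows X_0/G_0,\overline{\eta})$ is a Riemannian groupoid.

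Finally, $\pi$ is a Riemannian groupoid submersion in the sense of \cite[Def. 3.2.1]{dHF2}: it is a groupoid morphism, submersive at every level, whose level-$2$ component is exactly $q$, which was arranged so that $q_\ast\eta^{(2)}=\overline{\eta}$; the compatibility of the induced lower metrics, namely $(\pi_1)_\ast\eta^{(1)}=\overline{\eta}^{(1)}$ and $(\pi_0)_\ast\eta^{(0)}=\overline{\eta}^{(0)}$, follows again from commuting push-forwards along the face maps. The same argument runs verbatim for $n$-metrics, using the free proper isometric action $\theta^n$ on $(X_n,\eta^{(n)})$ as in Remark \ref{Rmk2}. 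I expect the two genuinely substantive points to be the nerve-level identification $X_2/G_2\cong(X_1/G_1)_2$ and the transversality step handled by cancellation; everything else is bookkeeping with the equivariance identities \eqref{Equivariant} and \eqref{MultAction}.
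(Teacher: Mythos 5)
Your proposal is correct, and it takes a genuinely different (in fact more literal) route than the paper. The paper deliberately reduces to $1$-metrics: it builds the quotient groupoid by the explicit formula $\overline{m}([p],[q])=[m_X(p,gq)]$ with $t_G(g)=g_0$ chosen so that $s_X(p)=g_0t_X(q)$, sets $\overline{\eta}^{(j)}=(\pi_j)_\ast\eta^{(j)}$ for $j=0,1$ only, and verifies the $1$-metric axioms ($\overline{i}$ an isometry, $\overline{s}_\ast\overline{\eta}^{(1)}=\overline{t}_\ast\overline{\eta}^{(1)}=\overline{\eta}^{(0)}$) by functoriality of push-forwards, remarking that the $2$-metric computations are analogous. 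You instead work at the level the statement of Proposition \ref{PQuotient} literally demands: you push $\eta^{(2)}$ forward along the free, proper, isometric $G_2$-action on $X_2$, identify $X_2/G_2\cong(X_1/G_1)_2$, descend the $S_3$-action, and obtain transversality to $\overline{m}$ from your cancellation principle applied to $\pi_1\circ m_X=\overline{m}\circ q$. That cancellation lemma is correct as stated, and it is in effect the same mechanism the paper uses implicitly when it writes $\overline{s}_\ast\overline{\eta}^{(1)}=(\overline{s}\circ\pi_1)_\ast\eta^{(1)}=(\pi_0\circ s_X)_\ast\eta^{(1)}$; your version makes explicit the transversality-to-composition step that the paper's $1$-metric reduction lets it sidestep. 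Two small refinements you should add: in the identification $X_2/G_2\cong(X_1/G_1)_2$, surjectivity indeed uses surjectivity of $t_G$ (or $s_G$) to move representatives into composable position, but \emph{injectivity} is where freeness of $\theta^0$ enters --- if $(gp,hq)$ and $(p,q)$ both lie in $X_2$, freeness of the $G_0$-action at $s_X(p)=t_X(q)$ is what forces $s_G(g)=t_G(h)$, i.e.\ $(g,h)\in G_2$; and the $S_3$-generators on $X_2$ are equivariant not over the identity but over a corresponding $S_3$-action on $G_2$ (e.g.\ $(g,h)\mapsto(g\ast h,\,i_G(h))$, using \eqref{Equivariant} and \eqref{MultAction}), which still carries $G_2$-orbits to $G_2$-orbits, so the descent of $\overline{\eta}$'s $S_3$-invariance goes through. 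What your route buys is a proof that directly produces the $2$-metric required by the definition of Riemannian groupoid and runs verbatim for $n$-metrics; what the paper's route buys is brevity, outsourcing the higher levels to the "analogous computations" remark.
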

\begin{proof}
We carry out the proof for $1$-metrics on $X_1$ since the computations are analogous if we consider instead $2$-metrics on $X_2$. We start by exhibiting the Lie groupoid structure on $X_1/G_1\rightrightarrows X_0/G_0$. This fact was stated for instance in \cite[Prop. 3.6]{GZ} but without proof so that we exhibit a proof of it by sake of completeness. It is well known that $X_j/G_j$ admits a unique manifold structure so that $\pi_j$ (for $j=0,1$) is a surjective submersion. We define source and target maps respectively as $\overline{s}([p])=[s_X(p)]$ and $\overline{t}([p])=[t_X(p)]$ for all $p\in X_1$. As consequence of Identities \eqref{Equivariant} these maps are well defined and, moreover, both of them are surjective submersions since $\pi_j$, $s$, and $t$ are so. We have that $([p],[q])\in (X_1/G_1)_2$ if and only if $\overline{s}([p])=\overline{t}([q])$ which in turn holds true if and only if $s_X(p)=g_0t_X(q)$ for some $g_0\in G_0$. Thus, we define $\overline{m}([p],[q])=[m_X(p,gq)]$ for some $g\in G_1$ such that $t_G(g)=g_0$. It is simple to check that $\overline{m}$ does not depend on the choice of $g$ and it is well defined because of Property \eqref{MultAction}. This is also clearly smooth and associative since $m_X$ is associative and Identity \eqref{MultAction} is satisfied. The unit map and the inversion are respectively defined by $\overline{u}([x])=[u_X(x)]$ and $\overline{i}([p])=[i_X(p)]$ for all $x\in X_0$ and $p\in X_1$. It is also easy to verify that these maps are well defined, smooth, and they satisfy the required groupoid conditions.
	
	Consider, for $j=0,1$, the induced Riemannian metric $\overline{\eta}^{(j)}=(\pi_j)_\ast \eta^{(j)}$ on $X_j/G_j$ making of $\pi_j$ a Riemannian submersion. Moreover, note that
	$$\overline{i}_\ast \overline{\eta}^{(1)}=(\overline{i}\circ \pi_1)_\ast \eta^{(1)}=(\pi_1\circ i_X)_\ast\eta^{(1)}=(\pi_1)_\ast \eta^{(1)}=\overline{\eta}^{(1)},$$
	since $i$ is an isometry, and
	$$\overline{s}_\ast \overline{\eta}^{(1)}=(\overline{s}\circ \pi_1)_\ast \eta^{(1)}=(\pi_0\circ s_X)_\ast\eta^{(1)}=(\pi_0)_\ast \eta^{(0)}=(\pi_0\circ t_X)_\ast\eta^{(1)}=(\overline{t}\circ \pi_1)_\ast \eta^{(1)}=\overline{t}_\ast \overline{\eta}^{(1)},$$
	so that $\overline{s}_\ast \overline{\eta}^{(1)}=\overline{t}_\ast \overline{\eta}^{(1)}=\overline{\eta}^{(0)}$. Therefore, $(X_1/G_1\rightrightarrows X_0/G_0,\overline{\eta})$ with $\overline{\eta}=(\overline{\eta}^{(1)},\overline{\eta}^{(0)})$ is a Riemannian groupoid and $\pi=(\pi_1,\pi_0)$ is a Riemannian submersion of groupoids, as desired.
\end{proof}

Secondly, we can obtain $2$-equivariant weak linearizations around $G_0$-invariant saturated submanifolds in $X_0$. Namely, let $\theta$ be a $2$-action of $G_1\rightrightarrows G_0$ on $X_1\rightrightarrows X_0$ and let $S\subset X_0$ be a $G_0$-invariant saturated submanifold. We say that $X_1\rightrightarrows X_0$ is \emph{$2$-equivariant weakly linearizable} at $S$ if there are $G$-invariant Lie groupoid neighborhoods $\widetilde{V}\rightrightarrows V$ of $X_{S}\rightrightarrows S$ in $\nu(X_S)\rightrightarrows \nu(S)$ (seen as the zero section) and $\widetilde{U}\rightrightarrows U$ of $X_S\rightrightarrows S$ in $X_1\rightrightarrows X_0$, and a $2$-equivariant Lie groupoid isomorphism $\phi: (\widetilde{V}\rightrightarrows V)\xrightarrow[]{\cong} (\widetilde{U}\rightrightarrows U)$ which is the identity on $X_S\rightrightarrows S$.

The $G$-invariant property of the Lie groupoid neighborhood in $\nu(X_S)\rightrightarrows \nu(S)$ used above makes sense because of the following facts. Let us assume this time that we are given with a $2$-metric $\eta^{(2)}$ on $X_2$ and that $G_2$ acts on $(X_2,\eta^{(2)})$ by isometries. It is simple to check that every $2$-action $\theta=(\theta^1,\theta^0)$ of $G_1\rightrightarrows G_0$ on $X_1\rightrightarrows X_0$ induces a $2$-action $T\theta=(T\theta^1,T\theta^0)$ of $G_1\rightrightarrows G_0$ on $TX_1 \rightrightarrows TX_0$ by differentiating the actions $\theta^1$ and $\theta^0$. Let us pick a $G_0$-invariant saturated submanifold $S$ in $X_0$. This implies that $X_{S}$ is $G_1$-invariant and that $(X_{S})_2$ is $G_2$-invariant. If we respectively use $\eta^{(2)}$, $\eta^{(1)}$, and $\eta^{(0)}$ to identify $\nu(X_{S})_{2}\cong \nu((X_{S})_2)$ with $(T(X_{S})_2)^\perp$,  $\nu(X_{S})$ with $TX_S^\perp$, and $\nu(S)$ with $TS^\perp$ then it follows that the $2$-action $T\theta$ restrict to a well defined $2$-action $\overline{T\theta}$ of $G_1\rightrightarrows G_0$ on $\nu(X_{S})\rightrightarrows \nu(S)$ since $\theta$ is  isometric. Furthermore, the latter fact also implies that the exponential maps $\textnormal{exp}^{(2)}$, $\textnormal{exp}^{(1)}$, and $\textnormal{exp}^{(0)}$ are equivariant local diffeomorphisms. Therefore, as isometries preserve (horizontal) geodesics then by following the classical proofs of the equivariant tubular neighborhood theorem in \cite[VI. Thm. 2.2]{B} and \cite[Thm. 4.4]{IK,K} with the proof of the weak linearization theorem given in \cite[Thm. 5.11]{dHF} we easily obtain:

\begin{proposition}[2-equivariant weak groupoid linearization]\label{Lin1}
	Let $G_1\rightrightarrows G_0$ be a Lie $2$-group acting by isometries on a Riemannian groupoid $(X_1\rightrightarrows X_0,\eta)$. Then there exists a $2$-equivariant weak linearization of $X_1\rightrightarrows X_0$ around any $G_0$-invariant saturated submanifold $S$ in $X_0$.
\end{proposition}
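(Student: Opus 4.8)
The plan is to run the construction behind the weak linearization theorem \cite[Thm. 5.11]{dHF} and to observe that the isometric hypothesis makes every step equivariant, exactly as in the classical equivariant tubular neighborhood theorem \cite[VI. Thm. 2.2]{B}, \cite[Thm. 4.4]{IK,K}. All the required ingredients were assembled in the discussion above: the restricted $2$-action $\overline{T\theta}$ of $G_1\rightrightarrows G_0$ on $\nu(X_S)\rightrightarrows \nu(S)$, the fact that the exponential maps $\textnormal{exp}^{(2)}$, $\textnormal{exp}^{(1)}$, $\textnormal{exp}^{(0)}$ of $\eta^{(2)},\eta^{(1)},\eta^{(0)}$ are equivariant local diffeomorphisms, and the fact that isometries preserve horizontal geodesics. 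So I would use these exponential maps to build the linearization $\phi$ levelwise, invoke \cite[Thm. 5.11]{dHF} to see that on a small enough tube it is a groupoid isomorphism, and then read off the $2$-equivariance and the $G$-invariance directly from the equivariance of the $\textnormal{exp}^{(j)}$.

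First I would define the candidate map $\phi=(\phi^1,\phi^0)$ by exponentiating normal vectors: $\phi^0=\textnormal{exp}^{(0)}$ on $\nu(S)$ and $\phi^1=\textnormal{exp}^{(1)}$ on $\nu(X_S)$, with $\textnormal{exp}^{(2)}$ on $\nu((X_S)_2)$ controlling composable arrows. The mechanism of \cite[Thm. 5.11]{dHF} is that, since $S$ is saturated, the fibres of $s_X$ and $t_X$ over $S$ are contained in $X_S$, so the normal directions to $X_S$ are horizontal for these Riemannian submersions; as a Riemannian submersion carries horizontal geodesics to geodesics, each of $s_X$, $t_X$, $m_X$ intertwines the relevant exponential maps on normal vectors. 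Hence $\phi$ commutes with source, target and multiplication, i.e.\ it is a Lie groupoid morphism wherever defined. Restricting to a geodesic tube $\widetilde{V}\rightrightarrows V$ of uniform radius $\epsilon>0$, small enough that each $\textnormal{exp}^{(j)}$ is a diffeomorphism and the coherence between levels demanded in \cite[Thm. 5.11]{dHF} holds, turns $\phi$ into a Lie groupoid isomorphism onto its image $\widetilde{U}\rightrightarrows U$. That $\phi$ is the identity on $X_S\rightrightarrows S$ is immediate, as each $\textnormal{exp}^{(j)}$ restricts to the identity on the zero section.

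It then remains to check the two equivariance properties. Since $\theta$ is isometric, Lemma \ref{Rmk1} (and Remark \ref{Rmk2} in the general $n$-metric case) shows that each $G_j$ acts on $(X_j,\eta^{(j)})$ by isometries; as an isometry commutes with the Riemannian exponential map, one has $\textnormal{exp}^{(j)}(g\cdot v)=g\cdot\textnormal{exp}^{(j)}(v)$. Because $\phi$ is assembled from the $\textnormal{exp}^{(j)}$ while the action on $\nu(X_S)\rightrightarrows \nu(S)$ is precisely $\overline{T\theta}$, this says that $\phi$ intertwines $\overline{T\theta}$ with $\theta$, i.e.\ $\phi$ is $2$-equivariant. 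For the $G$-invariance of the neighborhoods, the point is that a geodesic tube of fixed radius $\epsilon$ is automatically $G$-invariant, since $G$ acts on each normal bundle by fibrewise linear isometries for $\eta^{(j)}$ and hence preserves the induced norm; its image $\widetilde{U}\rightrightarrows U$ is then $G$-invariant as well, being the image of a $G$-invariant set under the equivariant map $\phi$.

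The main obstacle is not the equivariance but the coherent choice of a single radius $\epsilon$: one must ensure that it simultaneously makes $\textnormal{exp}^{(0)}$, $\textnormal{exp}^{(1)}$ (and $\textnormal{exp}^{(2)}$) diffeomorphisms and keeps the three levels compatible under source, target and multiplication, so that $\widetilde{V}\rightrightarrows V$ is genuinely a subgroupoid and $\phi$ a morphism on it. This is precisely the delicate coherence already settled in \cite[Thm. 5.11]{dHF}, and the crucial remark is that the isometric hypothesis does not interfere with it: tubes of uniform radius are $G$-invariant by construction and the $\textnormal{exp}^{(j)}$ are equivariant, so no averaging or properness is needed to gain invariance. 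The radius produced by the non-equivariant argument can be used verbatim, and the resulting $\widetilde{V}\rightrightarrows V$ and $\widetilde{U}\rightrightarrows U$ are $G$-invariant with $\phi$ automatically $2$-equivariant, without any further shrinking.
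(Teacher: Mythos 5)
Your proposal is correct and follows essentially the same route as the paper: the paper's proof is precisely the discussion preceding the proposition — the restricted $2$-action $\overline{T\theta}$ on $\nu(X_S)\rightrightarrows \nu(S)$, the equivariance of the exponential maps $\textnormal{exp}^{(j)}$, and the fact that isometries and Riemannian submersions preserve (horizontal) geodesics — combined with the classical equivariant tubular neighborhood arguments of \cite{B}, \cite{IK}, \cite{K} and the weak linearization theorem \cite[Thm. 5.11]{dHF}, exactly the ingredients you assemble. The one technical over-claim is your reliance on tubes of \emph{uniform} radius $\epsilon$ (these need not exist when $S$ is noncompact, and \cite{dHF} does not produce them): the $G$-invariance of the neighborhoods should instead be obtained from a $G$-invariant radius function, which exists automatically here because any injectivity-radius-type function built from the invariant metric and the invariant submanifolds $S$, $X_S$ is itself $G$-invariant, so no averaging or properness is needed — which is the point of your last paragraph, correctly stated once "uniform radius" is replaced by "invariant radius function."
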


Let us now deal with the classical approach used to ensure the existence of invariant Riemannian metrics on $G$-manifolds to provide a similar construction in our context. 

\begin{remark}
Let us denote by $\eta^\ast$ the dual metric associated to a Riemannian metric $\eta$. From \cite[Prop. 2.2]{dHF} we know that if $\pi:E\to B$ is a submersion and $\lbrace \eta_1,\cdots,\eta_k\rbrace$ is a collection of $\pi$-transverse metrics then its tangent average $\frac{1}{k}\sum_{l=1}^k\eta_l$ fails to be $\pi$-transverse again in general. Nevertheless, its cotangent average $\frac{1}{k}\left( \sum_{l=1}^k\eta_l^\ast\right)^\ast$ is always $\pi$-transverse which sometimes makes more advantageous to take a cotangent space point of view in the study of Riemannian submersions.
\end{remark}


Assume for a moment that $G_1$ is compact so that $G_0$ is also compact. If $\mu_1$ is the normalized Haar measure on $G_1$ then, by uniqueness, the pushforward measure $s_{G\ast}\mu_1$ agrees with the normalized Haar measure $\mu_0$ on $G_0$ since $s_G$ a surjective Lie group homomorphism and
\begin{equation}\label{PushHaar}
\int_{G_0}f d(s_{G\ast}\mu_1)=\int_{G_1} f\circ s_G d\mu_1,
\end{equation}
for each continuous function $f:G_0\to \mathbb{R}$. Analogously, $t_{G\ast}\mu_1=\mu_0$ and $i_{G\ast}\mu_1=\mu_1$. Note also that the fact that $i_{G\ast}\mu_1=\mu_1$ and $s_G\circ i_G=t_G$ immediately implies that $t_{G\ast}\mu_1=s_{G\ast}\mu_1$. Thus, we are in conditions to state:

\begin{theorem}\label{Existence2}
Suppose that $G_1\rightrightarrows G_0$ is a Lie $2$-group acting on a Riemannian groupoid $(X_1\rightrightarrows X_0,\eta)$. If $G_1$ compact then there exists a groupoid metric $\overline{\eta}$ on $X_1\rightrightarrows X_0$ for which the 2-action $\theta$ becomes isometric.
\end{theorem}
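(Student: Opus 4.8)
The plan is to produce $\overline{\eta}$ by averaging the given metric over the compact group against Haar measure, but crucially working with the \emph{dual} (cotangent) metric rather than the metric itself. Following the convention adopted above I would carry this out for the $1$-metric $\eta^{(1)}$ over $G_1$, the case of a genuine $2$-metric over $G_2$ being entirely analogous. Concretely, I would set
$$(\overline{\eta}^{(1)})^\ast \ce \int_{G_1} (\theta^1_g)^\ast (\eta^{(1)})^\ast \, d\mu_1(g),$$
a fibrewise average of positive-definite cometrics against the probability measure $\mu_1$, and then let $\overline{\eta}^{(1)}$ be the Riemannian metric dual to $(\overline{\eta}^{(1)})^\ast$. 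The reason for averaging cometrics rather than metrics is exactly the phenomenon recorded in the Remark preceding the statement: the naive tangent average of transverse metrics need not stay transverse, whereas the cotangent average always does.

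With this definition three things must be checked. First, $G_1$-invariance: since $\mu_1$ is bi-invariant, a change of variables $g\mapsto gg_0$ shows $(\theta^1_{g_0})^\ast(\overline{\eta}^{(1)})^\ast=(\overline{\eta}^{(1)})^\ast$ for every $g_0\in G_1$, so each $\theta^1_{g_0}$ is an isometry of $\overline{\eta}^{(1)}$ and the $2$-action becomes isometric in the sense of our convention. Second, transversality: by the equivariance relations \eqref{Equivariant} each $\theta^1_g$ covers the diffeomorphism $\theta^0_{s_G(g)}$ along $s_X$ (and $\theta^0_{t_G(g)}$ along $t_X$), hence every integrand $(\theta^1_g)^\ast(\eta^{(1)})^\ast$ is again transverse to $s_X$ and to $t_X$; the cotangent-averaging statement of \cite[Prop. 2.2]{dHF} then guarantees that $(\overline{\eta}^{(1)})^\ast$, and therefore $\overline{\eta}^{(1)}$, is transverse to both $s_X$ and $t_X$.

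Third, and this is where the Haar-measure bookkeeping established just above the statement does its work, I must verify that the two push-forwards agree and that $i_X$ stays an isometry, so that $\overline{\eta}^{(1)}$ is a bona fide groupoid $1$-metric. The dual description of transversality makes the push-forward a fixed linear restriction of the cometric via $d(s_X)^\ast$, so push-forward commutes with cotangent averaging; combining this with the fact that each $\theta^1_g$ covers $\theta^0_{s_G(g)}$ gives, at the level of dual metrics,
$$((s_X)_\ast \overline{\eta}^{(1)})^\ast=\int_{G_1}(\theta^0_{s_G(g)})^\ast (\eta^{(0)})^\ast\,d\mu_1(g)=\int_{G_0}(\theta^0_{g_0})^\ast (\eta^{(0)})^\ast\,d(s_{G\ast}\mu_1)(g_0)$$
by the change of variables \eqref{PushHaar}, and likewise for $t_X$. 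Since $s_{G\ast}\mu_1=t_{G\ast}\mu_1=\mu_0$, both computations land on the same $G_0$-average of $\eta^{(0)}$, which forces $(s_X)_\ast\overline{\eta}^{(1)}=(t_X)_\ast\overline{\eta}^{(1)}=:\overline{\eta}^{(0)}$. An identical change-of-variables argument, now using $i_X\circ\theta^1_g=\theta^1_{i_G(g)}\circ i_X$ together with $i_{G\ast}\mu_1=\mu_1$ and the fact that $i_X$ is already an isometry of $\eta^{(1)}$, yields $i_X^\ast\overline{\eta}^{(1)}=\overline{\eta}^{(1)}$.

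The main obstacle is precisely the tension between invariance and transversality: averaging is what produces invariance, but averaging metrics naively breaks the transversality that makes $\overline{\eta}$ a groupoid metric, so the whole argument hinges on performing the averaging at the level of dual metrics and on the compatibility identities $s_{G\ast}\mu_1=t_{G\ast}\mu_1=\mu_0$ and $i_{G\ast}\mu_1=\mu_1$. For the honest $2$-metric statement one repeats the construction for $\eta^{(2)}$ over the compact group $G_2$; the only additional point is that the average must remain invariant under the $S_3$-action on $X_2$, which I expect to hold because that action is $G$-equivariant and the Haar measure $\mu_2$ is itself $S_3$-invariant, so that $S_3$-invariance of $\eta^{(2)}$ passes to $\overline{\eta}^{(2)}$ by the same change-of-variables computation.
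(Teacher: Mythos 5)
Your proposal is correct and takes essentially the same route as the paper's proof: dual (cotangent) averaging of $(\eta^{(1)})^\ast$ and $(\eta^{(0)})^\ast$ against the normalized Haar measures, combined with the compatibility identities $s_{G\ast}\mu_1=t_{G\ast}\mu_1=\mu_0$ and $i_{G\ast}\mu_1=\mu_1$ from Identity \eqref{PushHaar} and the equivariance relations \eqref{Equivariant}. The only cosmetic difference is that you split the argument into transversality (via \cite[Prop.~2.2]{dHF}) followed by agreement of the two push-forwards, whereas the paper establishes both at once through a single chain of equalities for the dual submersion condition at each $p\in X_1$; your closing remark on the $2$-metric case and $S_3$-invariance likewise matches Remark \ref{Rmk4}.
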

\begin{proof}

To simplify the computations we shall suppose that $\eta=\eta^{(1)}$ is a $1$-metric on $X_1$. By dual averaging we define:
$$(\overline{\eta}^{(1)})^\ast:=\int_{G_1}(\theta^1)^\ast_g(\eta^{(1)})^\ast d\mu_1(g)\quad\textnormal{and}\quad (\overline{\eta}^{(0)})^\ast:=\int_{G_0}(\theta^0)^\ast_{g_0}(\eta^{(0)})^\ast d\mu_0(g_0).$$
The result will follow from the fact that $\theta$ is a $2$-action, $\eta$ is already a $1$-metric, and Identity \eqref{PushHaar} holds true. Indeed, on the one hand, a straightforward computation using the fact that $i$ is an isometry of $(X_1,\eta^{(1)})$ verifying both Identity \eqref{Equivariant} and  $i_{G\ast}\mu_1=\mu_1$ implies that $i_X$ is an isometry of $(X_1,\overline{\eta}^{(1)})$ as well. The fact that $s_X:X_1\to X_0$ is a Riemannian submersion tells us that $ds_X(p)^\ast: T_{s_X(p)}^\ast X_0 \to \textnormal{ker}(ds_X(p))^\circ$ is an isometry for all $p\in X_1$ where $\textnormal{ker}(ds_X(p))^\circ$ denotes the annihilator of the vectors tangent to the fiber. Given $p\in X_1$, covectors  $\alpha,\beta\in T_{s_X(p)}^\ast X_0$, and $g\in G_1$ such that $s_G(g)=g_0$ we get the following chain of equalities:


\begin{eqnarray*}
(\overline{\eta}^{(0)}_{s_X(p)})^\ast (\alpha,\beta) &=& \int_{G_0}(\eta^{(0)}_{g_0s_X(p)})^\ast ((\theta^0_{g_0})^\ast(\alpha ),(\theta^0_{g_0})^\ast(\beta ))d\mu_0\\
\eqref{PushHaar}&=& \int_{G_1}(\eta^{(0)}_{s_G(g)s_X(p)})^\ast ((\theta^0_{s_G(g)})^\ast(\alpha ),(\theta^0_{s_G(g)})^\ast(\beta ))d\mu_1\\
& = & \int_{G_1}(\eta^{(1)}_{gp})^\ast (ds_X(gp)^\ast((\theta^0_{s_G(g)})^\ast(\alpha )),ds_X(gp)^\ast((\theta^0_{s_G(g)})^\ast(\beta )))d\mu_1\\
& = & \int_{G_1}(\eta^{(1)}_{gp})^\ast ((\theta^1_{g})^\ast(ds_X(p)^\ast(\alpha )),(\theta^1_{g})^\ast(ds_X(p)^\ast(\beta )))d\mu_1\\
&= & (\overline{\eta}^{(1)}_{p})^\ast (ds_X(p)^\ast(\alpha ),ds_X(p)^\ast(\beta )),
\end{eqnarray*}
from which we conclude that $s_X$ is also Riemannian for the averaged metrics. With analogous computations we get a similar conclusion for $t_X:X_1\to X_0$ so that the result follows as claimed.
\end{proof}

\begin{remark}\label{Rmk4}
It is worth noticing that if we consider an $n$-metric on $X_n$ for $n\geq 2$ and take into account the simplicial approach described in Remark \ref{Rmk2} then by applying similar averaging arguments as in the proof of Theorem \ref{Existence2} it is possible to show the existence of another $n$-metric on $X_n$ that is invariant by the action of $G_n$ on $X_n$. For instance, let $\eta^{(2)}$ be a $2$-metric on $X_2$ and let $\mu_2$ denote the normalized Haar measure on $G_2$. On the one hand, by dual averaging we define $\displaystyle (\overline{\eta}^{(2)})^\ast:=\int_{G_2}(\theta^2)^\ast_{(g,h)}(\eta^{(2)})^\ast d\mu_2(g,h)$. On the other hand, it is simple to see that we can obtain analogous formulas as that in Identity \eqref{PushHaar} by using instead $\pi_{2,G}, m_G, \pi_{1,G}:G_2\to G_1$ as well as the Lie group isomorphisms $G_2\to G_2$ determined by the canonical action of $S_3$ on $G_2$. Hence, by similar computations as those in the main part of Theorem \ref{Existence2} it follows that $\overline{\eta}^{(2)}$ is a $2$-metric on $X_2$ for which the action of $G_2$ on $(X_2,\overline{\eta}^{(2)})$ becomes isometric.
\end{remark}

It is known that every proper Lie groupoid can be equipped with a groupoid metric, see \cite[Thm. 4.13]{dHF}. Hence, we get that:
\begin{corollary}
Let $\theta$ be a $2$-action of a Lie $2$-group $G_1\rightrightarrows G_0$ on a proper groupoid $X_1\rightrightarrows X_0$. If $G_1$ is compact then there always exists a Riemannian groupoid metric on $X_1\rightrightarrows X_0$ for which $\theta$ becomes an isometric $2$-action.
\end{corollary}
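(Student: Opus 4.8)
The plan is to reduce the Corollary to Theorem~\ref{Existence2} by first manufacturing an auxiliary groupoid metric to which that theorem can be applied. Since by hypothesis $X_1\rightrightarrows X_0$ is a proper Lie groupoid, I would invoke the existence result \cite[Thm.~4.13]{dHF}, which guarantees that any proper Lie groupoid admits a Riemannian groupoid metric. Fixing such a metric $\eta$ upgrades $X_1\rightrightarrows X_0$ to a Riemannian groupoid $(X_1\rightrightarrows X_0,\eta)$ without altering the underlying $2$-action $\theta$; note that no compatibility between $\eta$ and $\theta$ is needed at this stage, since the next step will replace $\eta$ by an averaged metric anyway.

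With this metric in hand, the data $(G_1\rightrightarrows G_0,\theta)$ is now precisely a Lie $2$-group acting on a Riemannian groupoid, and $G_1$ is compact by hypothesis. I would therefore apply Theorem~\ref{Existence2} directly: it produces a groupoid metric $\overline{\eta}$ on $X_1\rightrightarrows X_0$ for which $\theta$ becomes an isometric $2$-action. This $\overline{\eta}$ is exactly the metric asserted to exist in the Corollary, so the argument terminates here.

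I do not expect any genuine obstacle, since all the analytic content has already been discharged. The substantive work—constructing an invariant metric by dual averaging over the compact group $G_1$ while preserving transversality to the source, target, and composition maps—lives entirely inside the proof of Theorem~\ref{Existence2}, and the passage from \emph{proper} to \emph{Riemannian} is supplied verbatim by \cite[Thm.~4.13]{dHF}. The only point worth flagging is that compactness of $G_1$ forces $G_0$ to be compact as well, being the continuous image of $G_1$ under the surjective homomorphism $s_G$; this is what makes the averaging over objects in Theorem~\ref{Existence2} well posed, and it is already recorded in the discussion preceding that theorem. Thus the Corollary follows immediately by combining the two cited results.
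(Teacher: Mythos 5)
Your proposal is correct and is essentially identical to the paper's argument: the paper derives this corollary by citing \cite[Thm.~4.13]{dHF} to equip the proper groupoid $X_1\rightrightarrows X_0$ with a groupoid metric and then immediately applying Theorem~\ref{Existence2} (whose averaging argument uses compactness of $G_1$, with $G_0$ compact as its image under $s_G$). Your remarks on where the analytic content lives and on the compactness of $G_0$ match the discussion preceding Theorem~\ref{Existence2} in the paper.
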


As an interesting application of Theorem \ref{Existence2} and Proposition \ref{PQuotient} we can prove analogous statements to those of the Slice Theorem and the Equivariant Tubular Neighborhood Theorem in our setting. Let us start by introducing the following terminology. If $\theta^0:G\times M\to M$ is a smooth action of a Lie group $G$ on a smooth manifold $M$ then a \emph{standard slice} at $x_0\in M$ is an embedded submanifold $S_{x_0}$ containing $x_0$ and satisfying the following properties:
\begin{enumerate}
\item[a.] $T_{x_0}M=T_{x_0} (G\cdot x_0) \oplus T_{x_0} S_{x_0}$ and $T_xM=T_{x} (G\cdot x)+T_{x} S_{x_0}$ for all $x\in S_{x_0}$,
\item[b.] $S_{x_0}$ is $\textnormal{Iso}_{G}(x_0)$-invariant, and 
\item[c.] if $x\in S_{x_0}$ and $g\in G$ are such that $\theta_g(x)\in S_{x_0}$ then $g\in \textnormal{Iso}_{G}(x_0)$.
\end{enumerate}
Here $G\cdot x_0$ and $\textnormal{Iso}_{G}(x_0)$ respectively denote the $G$-orbit and the $G$-isotropy group at $x_0$ with respect to $\theta$.

\begin{definition}\label{DefSlice}
Let $\theta$ be a $2$-action of $G_1\rightrightarrows G_0$ on $X_1\rightrightarrows X_0$. A \emph{groupoid slice} at $x_0\in X_0$ is defined to be a Lie subgroupoid $S_{1_{x_0}}\rightrightarrows S_{x_0}$ of $X_1\rightrightarrows X_0$ such that $S_{x_0}$ and $S_{1_{x_0}}$ are standard slices at $x_0$ and $1_{x_0}$, respectively.
\end{definition}
The following is a straightforward result.
\begin{lemma}\label{IsoOrbit}
Take any $x_0\in X_0$. There are natural structures of:
\begin{itemize}
\item Lie 2-subgroup between the $G$-isotropy groups $\textnormal{Iso}_{G_1}(1_{x_0})\rightrightarrows \textnormal{Iso}_{G_0}(x_0)$, and
\item Lie subgroupoid between the $G$-orbits $G_1\cdot 1_{x_0}\rightrightarrows G_0\cdot x_0$.
\end{itemize}
\end{lemma}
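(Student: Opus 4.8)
The plan is to handle both bullets in the same way: in each case I would first identify the relevant subset as a submanifold (inside $G_1,G_0$ or inside $X_1,X_0$) and then check that the ambient structural maps restrict to it, so that the Lie $(2\textnormal{-})$groupoid structure is inherited. Every verification reduces to the equivariance relations \eqref{Equivariant} together with the multiplicativity identity \eqref{MultAction}, applied to the distinguished arrow $1_{x_0}=u_X(x_0)$, which satisfies $s_X(1_{x_0})=t_X(1_{x_0})=x_0$, $i_X(1_{x_0})=1_{x_0}$ and $m_X(1_{x_0},1_{x_0})=1_{x_0}$.

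For the first bullet, note that $\textnormal{Iso}_{G_1}(1_{x_0})$ and $\textnormal{Iso}_{G_0}(x_0)$ are stabilizers of the smooth group actions $\theta^1$ and $\theta^0$, hence closed embedded Lie subgroups of $G_1$ and $G_0$. I would then check that the structural maps of $G_1\rightrightarrows G_0$ restrict: applying $s_X$ and $t_X$ to the identity $\theta^1_g(1_{x_0})=1_{x_0}$ and using \eqref{Equivariant} shows $s_G(g),t_G(g)\in\textnormal{Iso}_{G_0}(x_0)$ whenever $g\in\textnormal{Iso}_{G_1}(1_{x_0})$; the unit equivariance $\theta^1_{u_G(g_0)}(1_{x_0})=u_X(\theta^0_{g_0}(x_0))$ shows that $u_G$ carries $\textnormal{Iso}_{G_0}(x_0)$ into $\textnormal{Iso}_{G_1}(1_{x_0})$; for composable $g,h\in\textnormal{Iso}_{G_1}(1_{x_0})$ the identity \eqref{MultAction} applied to $(g,h)$ and $(1_{x_0},1_{x_0})$ gives $\theta^1_{g*h}(1_{x_0})=\theta^1_g(1_{x_0})*\theta^1_h(1_{x_0})=1_{x_0}$; and the inverse equivariance gives $\theta^1_{i_G(g)}(1_{x_0})=i_X(1_{x_0})=1_{x_0}$. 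Since $\textnormal{Iso}_{G_1}(1_{x_0})$ and $\textnormal{Iso}_{G_0}(x_0)$ are moreover subgroups of the Lie groups $G_1$ and $G_0$, these restrictions exhibit $\textnormal{Iso}_{G_1}(1_{x_0})\rightrightarrows\textnormal{Iso}_{G_0}(x_0)$ as a Lie $2$-subgroup.

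For the second bullet, the orbits $G_1\cdot 1_{x_0}\cong G_1/\textnormal{Iso}_{G_1}(1_{x_0})$ and $G_0\cdot x_0\cong G_0/\textnormal{Iso}_{G_0}(x_0)$ carry their canonical initial (immersed) submanifold structures, and the pair $(\Phi,\phi)$ with $\Phi(g)=\theta^1_g(1_{x_0})$ and $\phi(g_0)=\theta^0_{g_0}(x_0)$ is a Lie groupoid morphism from $G_1\rightrightarrows G_0$ onto $G_1\cdot 1_{x_0}\rightrightarrows G_0\cdot x_0$ (again by \eqref{Equivariant} and \eqref{MultAction}). The restrictions of $s_X,t_X$ land in $G_0\cdot x_0$ and are submersions, being induced by the submersions $s_G,t_G$; the map $u_X$ carries $G_0\cdot x_0$ into $G_1\cdot 1_{x_0}$; and $i_X$ restricts exactly as before.

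I expect the main obstacle to be showing that the composition $m_X$ restricts, since the image of a groupoid morphism need not be closed under composition. Concretely, if $p=\theta^1_g(1_{x_0})$ and $q=\theta^1_h(1_{x_0})$ are composable in $X_1$, i.e. $\theta^0_{s_G(g)}(x_0)=\theta^0_{t_G(h)}(x_0)$, this only forces $c_0:=t_G(h)^{-1}s_G(g)\in\textnormal{Iso}_{G_0}(x_0)$ rather than $s_G(g)=t_G(h)$, so the pair $(g,h)$ need not lie in $G_2$. The remedy, which is the crux of the argument, is to replace $h$ by $\tilde h:=h\cdot u_G(c_0)$: since $t_G$ is a group homomorphism one gets $t_G(\tilde h)=t_G(h)\,c_0=s_G(g)$, so $(g,\tilde h)\in G_2$, while $u_G(c_0)\in\textnormal{Iso}_{G_1}(1_{x_0})$ (by the first bullet) yields $\theta^1_{\tilde h}(1_{x_0})=\theta^1_h(1_{x_0})=q$. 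Applying \eqref{MultAction} to the now composable pairs $(g,\tilde h)$ and $(1_{x_0},1_{x_0})$ then gives $m_X(p,q)=\theta^1_{g*\tilde h}(1_{x_0})\in G_1\cdot 1_{x_0}$. Smoothness of all restricted structure maps is automatic because orbits are initial submanifolds, and this completes the verification that $G_1\cdot 1_{x_0}\rightrightarrows G_0\cdot x_0$ is a Lie subgroupoid.
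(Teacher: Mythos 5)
Your proof is correct. Note that the paper itself offers no proof of Lemma \ref{IsoOrbit} -- it is stated as ``a straightforward result'' -- so your write-up supplies exactly the verification the authors leave implicit, and it does so accurately: the stabilizers are closed Lie subgroups, the equivariance relations \eqref{Equivariant} (together with the equivariance of $i_X$, which holds because $\theta$ is a groupoid morphism) show that $s_G$, $t_G$, $u_G$, $i_G$ restrict, and \eqref{MultAction} applied to $(1_{x_0},1_{x_0})\in X_2$ handles the composition. You also correctly identify the one genuinely non-routine point, which the word ``straightforward'' glosses over: the orbit $G_1\cdot 1_{x_0}$ is the image of the groupoid morphism $g\mapsto g1_{x_0}$, and images of groupoid morphisms need not be closed under $m_X$, since composability of $g1_{x_0}$ and $h1_{x_0}$ only forces $c_0=t_G(h)^{-1}s_G(g)\in\textnormal{Iso}_{G_0}(x_0)$ rather than $(g,h)\in G_2$. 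Your correction $\tilde h=h\cdot u_G(c_0)$ is exactly the right fix: $t_G\circ u_G=\textnormal{id}_{G_0}$ and the homomorphism property of $t_G$ give $t_G(\tilde h)=s_G(g)$, while the first bullet guarantees $u_G(c_0)$ fixes $1_{x_0}$, so $\tilde h 1_{x_0}=h1_{x_0}$ and \eqref{MultAction} closes the orbit under composition. The remaining smoothness and submersion claims (restricted maps are smooth because orbits are initial submanifolds; $\bar s,\bar t$ are submersions because they are induced through the surjective submersions $G_1\to G_1\cdot 1_{x_0}$ by $s_G,t_G$) are standard and correctly invoked.
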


Our version of the Slice Theorem is as follows.

\begin{proposition}[Groupoid slice]\label{SliceThm}
Let $\theta$ be a proper $2$-action of a Lie $2$-group $G_1\rightrightarrows G_0$ on a Riemannian groupoid $(X_1\rightrightarrows X_0,\eta)$. Then there exists a groupoid slice $S_{1_{x_0}}\rightrightarrows S_{x_0}$ at each $x_0\in X_0$.

\end{proposition}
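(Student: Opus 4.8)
The plan is to construct the two standard slices as normal exponential images of the relevant orbits and then to check, using the equivariance of the structural maps, that they assemble into a Lie subgroupoid. Since $\theta=(\theta^1,\theta^0)$ is proper, by definition both $\theta^1$ on $X_1$ and $\theta^0$ on $X_0$ are proper Lie group actions; hence the isotropy groups $\textnormal{Iso}_{G_1}(1_{x_0})$ and $\textnormal{Iso}_{G_0}(x_0)$ are compact and the orbits $G_1\cdot 1_{x_0}\subset X_1$, $G_0\cdot x_0\subset X_0$ are closed embedded submanifolds. Replacing $\eta$ by a $\theta$-invariant groupoid metric --- obtained by the dual averaging of Theorem \ref{Existence2} when $G_1$ is compact, and in general by adapting the classical construction of invariant metrics for proper actions (local averages over the compact isotropy groups glued by a partition of unity subordinate to a locally finite cover of the orbit space) --- I may assume that $\theta$ is isometric.

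I then set $S_{x_0}:=\exp^{(0)}_{x_0}(B_\epsilon)$ with $B_\epsilon\subset N_{x_0}$ and $S_{1_{x_0}}:=\exp^{(1)}_{1_{x_0}}(B_\epsilon)$ with $B_\epsilon\subset N_{1_{x_0}}$, where $N_{x_0}=(T_{x_0}(G_0\cdot x_0))^{\perp}$ and $N_{1_{x_0}}=(T_{1_{x_0}}(G_1\cdot 1_{x_0}))^{\perp}$ are the normal spaces of the orbits with respect to $\eta^{(0)}$ and $\eta^{(1)}$, and $\exp^{(0)},\exp^{(1)}$ are the corresponding exponential maps. Because $\theta^0$ and $\theta^1$ are now proper and isometric, the classical Riemannian proof of the slice and equivariant tubular neighborhood theorems (\cite[VI. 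Thm. 2.2]{B}, \cite[Thm. 4.4]{IK,K}) shows that, for $\epsilon$ sufficiently small, $S_{x_0}$ and $S_{1_{x_0}}$ are standard slices at $x_0$ and $1_{x_0}$: axiom (a) is the transversality of a normal exponential image to its orbit, axiom (b) holds because the compact isotropy groups act linearly on the normal spaces and the exponential maps are equivariant, and axiom (c) is the injectivity of the tube for small $\epsilon$.

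The heart of the matter is to verify that $S_{1_{x_0}}\rightrightarrows S_{x_0}$ is a Lie subgroupoid of $X_1\rightrightarrows X_0$. Here I would exploit that the structural maps are $2$-equivariant (Equation \eqref{Equivariant}) and send orbits to orbits (Lemma \ref{IsoOrbit}), and that --- with $\eta$ now invariant --- the maps $s_X,t_X\colon X_1\to X_0$ are equivariant Riemannian submersions, $i_X$ is an equivariant isometry, and $u_X$ is an equivariant section of $s_X$ and $t_X$. Since Riemannian submersions carry horizontal geodesics to geodesics and isometries carry geodesics to geodesics, these maps intertwine the normal exponentials, so that $s_X,t_X$ restrict to maps $S_{1_{x_0}}\to S_{x_0}$ and $i_X$ maps $S_{1_{x_0}}$ into itself (it fixes $1_{x_0}$ and preserves $G_1\cdot 1_{x_0}$, hence $N_{1_{x_0}}$). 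The compatibility of $u_X$ and closure under the composition $m_X$ then follow after shrinking $\epsilon$ uniformly, using that $m_X$ is a Riemannian submersion on $X_2$ together with the multiplicativity relation \eqref{MultAction}; this is the same mechanism by which the exponential maps at the levels $X_2,X_1,X_0$ are made equivariant and intertwined with the face maps in the weak linearization theorem (Proposition \ref{Lin1}).

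The main obstacle is precisely this last step: organizing the two independently built slices into a subgroupoid forces the structural maps to intertwine the normal exponential maps and to preserve the normal directions to the orbits, which is where the full Riemannian groupoid structure --- rather than a mere pair of invariant metrics on $X_1$ and $X_0$ --- becomes indispensable, most delicately in checking closure under composition and the behavior of the (not necessarily totally geodesic) unit section. A secondary technical point is guaranteeing the existence of a $\theta$-invariant groupoid metric when $G_1$ is only proper rather than compact.
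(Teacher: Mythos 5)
There is a genuine gap, and it sits exactly where you located ``the heart of the matter'': your definition of the arrow-level slice as a round metric ball, $S_{1_{x_0}}=\exp^{(1)}_{1_{x_0}}(B_\epsilon)$ with $B_\epsilon\subset (T_{1_{x_0}}(G_1\cdot 1_{x_0}))^{\perp}$, does not yield a Lie subgroupoid, and no uniform shrinking of $\epsilon$ can repair this because the defect is scale-invariant. Concretely, take the pair groupoid $\mathbb{R}^n\times\mathbb{R}^n\rightrightarrows\mathbb{R}^n$ with the product metric and the trivial $2$-group acting trivially, so the orbits are points and the slices are just exponential balls. The unit section $x\mapsto (x,x)$ expands norms by $\sqrt{2}$, so the units over $S_{x_0}$ are not contained in the round ball $S_{1_{x_0}}$ of the same radius; and the composition $(y,x_0)\ast(x_0,w)=(y,w)$ of two arrows of norm $0.9\,\epsilon$ has norm $\approx 1.27\,\epsilon$, so closure under $m_X$ fails as well. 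This is precisely why the paper does \emph{not} take a metric ball at $1_{x_0}$: it defines the domain as $V_{B_\epsilon(0)}=\overline{ds}_{1_{x_0}}^{-1}(B_\epsilon(0))\cap\overline{dt}_{1_{x_0}}^{-1}(B_\epsilon(0))$, i.e.\ the normal vectors whose source \emph{and} target normal components lie in the base ball, and sets $S_{1_{x_0}}=\tilde{\exp}^{(1)}_{1_{x_0}}(V_{B_\epsilon(0)})$. With this fibered domain, closure under all structural maps becomes essentially tautological once one knows (from the multiplicativity of the exponential maps of a $2$-metric, \cite[Thm.~5.11]{dHF}) that $\exp$ intertwines $s_X,t_X,u_X,m_X,i_X$ with $\overline{ds},\overline{dt},\overline{du},\overline{dm},\overline{di}$: for instance $\overline{ds}(\overline{du}(w))=\overline{dt}(\overline{du}(w))=w$ gives units, and $\overline{ds}(\overline{dm}(v,w))=\overline{ds}(w)$, $\overline{dt}(\overline{dm}(v,w))=\overline{dt}(v)$ give composition. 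Your appeal to ``shrinking $\epsilon$ uniformly'' cannot substitute for this choice of domain: the issue is the \emph{shape} of the slice at the arrow level, not its size.

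A secondary but real gap is your treatment of the invariant metric. Theorem \ref{Existence2} requires $G_1$ compact, which is not assumed here, and your fallback --- gluing local isotropy averages by a partition of unity --- is nontrivial for groupoid metrics (naive convex combinations of transverse metrics are not transverse; one needs the cotangent-averaging device, and the partition must itself be compatible with the $2$-action), and you give no argument. The paper sidesteps this entirely: it restricts the $2$-action to the isotropy Lie $2$-group $\textnormal{Iso}_{G_1}(1_{x_0})\rightrightarrows \textnormal{Iso}_{G_0}(x_0)$, which is \emph{compact} because $\theta$ is proper, and applies Theorem \ref{Existence2} together with Remark \ref{Rmk4} to produce a $2$-metric $\tilde{\eta}$ invariant under this compact $2$-group only. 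Invariance under the full (possibly non-compact) $G_1\rightrightarrows G_0$ is never needed for the slice construction, since the slice axioms involve only the isotropy action and properness; this is the same economy as in the classical proof of \cite[Thm.~3.49]{AB}.
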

\begin{proof}
Consider the induced $2$-action of $\textnormal{Iso}_{G_1}(1_{x_0})\rightrightarrows \textnormal{Iso}_{G_0}(x_0)$ on $X_1\rightrightarrows X_0$. By applying Theorem \ref{Existence2} together with Remark \ref{Rmk4} we may use the $2$-metric $\eta$ on $X_2$ and use it to construct another $2$-metric $\tilde{\eta}$ on $X_2$ in such a way $\textnormal{Iso}_{G_1}(1_{x_0})\rightrightarrows \textnormal{Iso}_{G_0}(x_0)$ acts isometrically on $(X_1\rightrightarrows X_0,\tilde{\eta})$. As in the classical case \cite[Thm. 3.49]{AB}, we define $S_{x_0}$ by setting $S_{x_0}=\tilde{\exp}^{(0)}_{x_0}(B_\epsilon(0))$ where $B_\epsilon(0)$ is an open ball of radius $\epsilon>0$ around the origin in the normal space $\nu_{x_0} (G_0 \cdot x_0)$ to the $G_0$-orbit through $x_0$ (normal domain).

As $G_1\cdot 1_{x_0}\rightrightarrows G_0\cdot x_0$ is a Lie subgroupoid of $X_1\rightrightarrows X_0$ we have a well defined Lie subgroupoid $\nu(G_1\cdot 1_{x_0})\rightrightarrows \nu(G_0\cdot x_0)$ of $TX_1\rightrightarrows TX_0$ so that we may also consider the Lie groupoid $V_{B_\epsilon(0)}\rightrightarrows B_\epsilon(0)$ where $V_{B_\epsilon(0)}=\overline{ds}_{1_{x_0}}^{-1}(B_\epsilon(0))\cap\overline{dt}_{1_{x_0}}^{-1}(B_\epsilon(0))$. Let us use the groupoid metric $\tilde{\eta}$ to identify $\nu(G_1\cdot 1_{x_0})\cong T(G_1\cdot 1_{x_0})^\perp$ and $\nu(G_0\cdot x_0)\cong T(G_0\cdot x_0)^\perp$. By shrinking $B_\epsilon(0)$ if necessary we may assume that $V_{B_\epsilon(0)}$ is an open ball around the origin in the normal space $T_{1_{x_0}} (G_1\cdot 1_{x_0})^\perp$ to the $G_1$-orbit through $1_{x_0}$ on which $\tilde{\exp}^{(1)}_{1_{x_0}}$ is well defined. Therefore, we now set $S_{1_{x_0}}= \tilde{\exp}^{(1)}_{1_{x_0}}(V_{B_\epsilon(0)})$. Hence, by arguing with similar arguments as those used to prove the multiplicative property of the exponential maps associated to a Riemannian $2$-metric in \cite[Thm. 5.11]{dHF} together with the equivariant property these exponential maps have, we conclude that $S_{1_{x_0}}\rightrightarrows S_{x_0}$ is the Lie subgroupoid of $X_1\rightrightarrows X_0$ we are looking for.
\end{proof}

The previous proposition will be used in order to obtain a $2$-equivariant Tubular Neighborhood Theorem for the $G$-orbit groupoid $G_1\cdot 1_{x_0}\rightrightarrows G_0\cdot x_0$. In addition, we need to consider the construction of the associated groupoid bundle of a groupoid principal $2$-bundle. A particular case of such a construction can be found in \cite[Lem. 9.1.2]{hsz}. 

\begin{remark}\label{AssociatedBundle}
Let $\pi: (P_1\rightrightarrows P_0)\to (X_1\rightrightarrows X_0)$ be a  groupoid principal $2$-bundle with structural Lie $2$-group $G_1\rightrightarrows G_0$ \cite{CCK}. Assume that there exists a left $2$-action of $G_1\rightrightarrows G_0$ over another Lie groupoid $F_1\rightrightarrows F_0$. Given this data we can construct two associated fiber bundles $E_j:= P_j\times_{G_j}F_j$ over $X_j$ for $j=0,1$. These are defined as the quotient spaces $(P_j\times F_j)/G_j$ with respect to the actions $g_j\cdot (p_j,f_j)=(p_jg_j^{-1},g_jf_j)$, for all $g_j\in G_j$, $p_j\in P_j$ and $f_j\in F_j$, together with projections $\overline{\pi}_j([p_j,f_j])=\pi_j(p_j)$ onto $X_j$. It is simple to check that there exists a natural Lie groupoid structure $E_1\rightrightarrows E_0$ for which the projection $\overline{\pi}: (E_1\rightrightarrows E_0)\to (X_1\rightrightarrows X_0)$ becomes a Lie groupoid fibration. The source and target maps are the obvious ones, namely:
$$s_E([p,f])=[s_P(p),s_F(f)]\qquad\textnormal{and}\qquad t_E([p,f])=[t_P(p),t_F(f)],$$
and the groupoid composition is defined as follows. If $s_E([p,f])=t_E([q,l])$ then there is $g_0\in G_0$ such that $(s_P(p)g_0^{-1},g_0s_F(f))=(t_P(q),t_F(l))$. So, we set
$$m_E([p,f],[q,l])=[(pg^{-1})\ast q,(gf)\ast l],$$
for some $g$ inside the $s_G$-fiber at $g_0$. From Formula \eqref{MultAction} it follows that the latter equality is well defined and that it does not depend on the choice of $g\in s_G^{-1}(g_0)$. Furthermore, the same formula and the associativity of $m_X$ together imply the associativity of the composition $m_E$. As expected, the inversion $i_E$ and the unit map $u_E$ are defined in the obvious way.
\end{remark}

Let $\theta$ be a $2$-action of $G_1\rightrightarrows G_0$ on $X_1\rightrightarrows X_0$. From Lemma \ref{IsoOrbit}, Proposition  \ref{SliceThm}, the quotient construction described in Proposition \ref{PQuotient}, and Remark \ref{AssociatedBundle} we easily deduce that:
\begin{lemma}
There exists a principal groupoid $2$-bundle $\pi:(G_1\rightrightarrows G_0)\to (G_1/\textnormal{Iso}_{G_1}(1_{x_0})\rightrightarrows G_0/\textnormal{Iso}_{G_0}(x_0))$ with structural Lie $2$-group $\textnormal{Iso}_{G_1}(1_{x_0})\rightrightarrows \textnormal{Iso}_{G_0}(x_0)$, yielding an associated groupoid fibration $$(G_1\times_{\textnormal{Iso}_{G_1}(1_{x_0})}S_{1_{x_0}}\rightrightarrows G_0\times_{\textnormal{Iso}_{G_0}(x_0)}S_{x_0})\to (G_1/\textnormal{Iso}_{G_1}(1_{x_0})\rightrightarrows G_0/\textnormal{Iso}_{G_0}(x_0)),$$ 
with groupoid fiber $S_{1_{x_0}}\rightrightarrows S_{x_0}$.
\end{lemma}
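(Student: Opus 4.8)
The plan is to assemble the statement from the four results just cited, treating separately the two assertions it contains: first that $\pi\colon(G_1\rightrightarrows G_0)\to(G_1/\textnormal{Iso}_{G_1}(1_{x_0})\rightrightarrows G_0/\textnormal{Iso}_{G_0}(x_0))$ is a principal groupoid $2$-bundle, and then that feeding this bundle together with the slice into the construction of Remark \ref{AssociatedBundle} produces the desired associated fibration.

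For the first assertion I would argue as follows. By Lemma \ref{IsoOrbit} the isotropy groups assemble into a Lie $2$-subgroup $\textnormal{Iso}_{G_1}(1_{x_0})\rightrightarrows\textnormal{Iso}_{G_0}(x_0)$ of $G_1\rightrightarrows G_0$, so I let it act on $G_1\rightrightarrows G_0$ on the right by groupwise right translation, $\theta^1(g,h)=g\cdot h$ and $\theta^0(g_0,h_0)=g_0\cdot h_0$. This is a right $2$-action: each of $\theta^1,\theta^0$ is a right group action, the structure maps $s_G,t_G$ intertwine it because they are group homomorphisms carrying the subgroup into the subgroup, and the multiplicativity condition \eqref{MultAction} is precisely the interchange law \eqref{MultiProduct}. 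Since $\textnormal{Iso}_{G_j}(\cdot)$ is a closed subgroup of $G_j$, right translation by it is free and proper on $G_j$, hence the $2$-action is free and proper. The Lie groupoid structure produced in the proof of Proposition \ref{PQuotient} (whose groupoid part uses only freeness and properness, and which is symmetric in left/right via $i_G$) then endows $G_1/\textnormal{Iso}_{G_1}(1_{x_0})\rightrightarrows G_0/\textnormal{Iso}_{G_0}(x_0)$ with a Lie groupoid structure for which $\pi=(\pi_1,\pi_0)$ is a submersive groupoid morphism; this is exactly the quotient of the Lie $2$-group by a Lie $2$-subgroup, i.e. a principal groupoid $2$-bundle in the sense of \cite{CCK}, with structural $2$-group $\textnormal{Iso}_{G_1}(1_{x_0})\rightrightarrows\textnormal{Iso}_{G_0}(x_0)$.

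For the second assertion I would first observe that the slice $S_{1_{x_0}}\rightrightarrows S_{x_0}$, which exists and is a Lie subgroupoid by Proposition \ref{SliceThm}, carries a natural \emph{left} $2$-action of the same structural $2$-group: by property (b) of a standard slice (Definition \ref{DefSlice}) the manifolds $S_{x_0}$ and $S_{1_{x_0}}$ are respectively $\textnormal{Iso}_{G_0}(x_0)$- and $\textnormal{Iso}_{G_1}(1_{x_0})$-invariant, so the ambient $2$-action $\theta$ restricts to a $2$-action of $\textnormal{Iso}_{G_1}(1_{x_0})\rightrightarrows\textnormal{Iso}_{G_0}(x_0)$ on $S_{1_{x_0}}\rightrightarrows S_{x_0}$. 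With the principal groupoid $2$-bundle from the first step as $P_1\rightrightarrows P_0$, this $2$-subgroup as structural $2$-group, and $S_{1_{x_0}}\rightrightarrows S_{x_0}$ as the fibre $F_1\rightrightarrows F_0$, the construction of Remark \ref{AssociatedBundle} applies verbatim and yields the Lie groupoid $E_1\rightrightarrows E_0$ with $E_0=G_0\times_{\textnormal{Iso}_{G_0}(x_0)}S_{x_0}$ and $E_1=G_1\times_{\textnormal{Iso}_{G_1}(1_{x_0})}S_{1_{x_0}}$, together with the Lie groupoid fibration $\overline{\pi}$ onto $G_1/\textnormal{Iso}_{G_1}(1_{x_0})\rightrightarrows G_0/\textnormal{Iso}_{G_0}(x_0)$ whose fibre is $S_{1_{x_0}}\rightrightarrows S_{x_0}$, as claimed.

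I expect the only genuine verifications — the rest being formal once the pieces are in place — to be the compatibility checks that make the right translation a bona fide principal groupoid $2$-bundle and that make the restricted action on the slice a bona fide $2$-action; both reduce to the interchange law \eqref{MultiProduct}/\eqref{MultAction} together with the fact that $s_G,t_G$ preserve the isotropy $2$-subgroup, so no new ideas beyond carefully matching the hypotheses of Remark \ref{AssociatedBundle} are needed. The phrase ``easily deduce'' in the statement reflects exactly this: the content is bookkeeping, and the main care is to ensure that the left/right conventions for the action on $G_1\rightrightarrows G_0$ versus on the slice stay consistent with the quotient conventions $g_j\cdot(p_j,f_j)=(p_jg_j^{-1},g_jf_j)$ used in Remark \ref{AssociatedBundle}.
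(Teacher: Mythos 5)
Your proposal is correct and follows exactly the route the paper intends: the paper derives this lemma, without further proof, from precisely the four ingredients you assemble — Lemma \ref{IsoOrbit} for the isotropy Lie $2$-subgroup, the groupoid-structure part of Proposition \ref{PQuotient} (applied to the free and proper translation $2$-action) for the quotient $2$-bundle, Proposition \ref{SliceThm} for the slice with its isotropy-invariance, and Remark \ref{AssociatedBundle} for the associated fibration. Your added care about the left/right action conventions matching the quotient formula $g_j\cdot(p_j,f_j)=(p_jg_j^{-1},g_jf_j)$ is exactly the bookkeeping the paper leaves implicit.
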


Consider the classical $G$-invariant tubular neighborhoods $\textnormal{Tub}_{\epsilon}^{\tilde{\eta}}(G_1\cdot 1_{x_0})=\theta^1(G_1,S_{1_{x_0}})$ and $\textnormal{Tub}_{\epsilon}^{\tilde{\eta}}(G_0\cdot x_0)=\theta^0(G_0,S_{x_0})$ of $G_1\cdot 1_{x_0}$ and $G_0\cdot x_0$, respectively. Note that we have emphasized the inclusion of $\tilde{\eta}$ and $\epsilon$, from the proof of Proposition \ref{SliceThm}, at the definition of the above tubular neighborhoods since the existence of our groupoid slice relies on them. It is clear that $\textnormal{Tub}_{\epsilon}^{\tilde{\eta}}(G_1\cdot 1_{x_0})\rightrightarrows \textnormal{Tub}_{\epsilon}^{\tilde{\eta}}(G_0\cdot x_0)$ is a Lie subgroupoid of $X_1\rightrightarrows X_0$ and that the $2$-action $\theta$ restricts well over it. Furthermore, $\textnormal{Tub}_{\epsilon}^{\tilde{\eta}}(G_1\cdot 1_{x_0})\rightrightarrows \textnormal{Tub}_{\epsilon}^{\tilde{\eta}}(G_0\cdot x_0)$ determines an open Lie groupoid neighborhood of the $G$-orbit groupoid $G_1\cdot 1_{x_0}\rightrightarrows G_0\cdot x_0$. 

We are now in conditions to state our version of the Equivariant Tubular Neighborhood Theorem. Namely: 
\begin{theorem}[Equivariant groupoid tubular neighborhood]\label{ETubular}
Suppose that $\theta$ is a proper $2$-action of a Lie $2$-group $G_1\rightrightarrows G_0$ on a Riemannian groupoid $(X_1\rightrightarrows X_0,\eta)$. Then, for every $x_0\in X_0$ there exists a $2$-equivariant Lie groupoid isomorphism
$$\Psi: (G_1\times_{\textnormal{Iso}_{G_1}(1_{x_0})}S_{1_{x_0}}\rightrightarrows G_0\times_{\textnormal{Iso}_{G_0}(x_0)}S_{x_0}) \xrightarrow[]{\cong} (\textnormal{Tub}_{\epsilon}^{\tilde{\eta}}(G_1\cdot 1_{x_0})\rightrightarrows \textnormal{Tub}_{\epsilon}^{\tilde{\eta}}(G_0\cdot x_0)).$$
\end{theorem}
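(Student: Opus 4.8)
The plan is to build $\Psi=(\Psi^1,\Psi^0)$ directly out of the action maps and then to verify, separately at the level of objects and of arrows, that each component is a diffeomorphism, and finally that the pair respects all of the groupoid structure. On objects set $\Psi^0([g_0,x])\ce\theta^0_{g_0}(x)=g_0x$ and on arrows $\Psi^1([g,p])\ce\theta^1_g(p)=gp$. These are well defined on the balanced products: if $[g_0,x]=[g_0',x']$ in $G_0\times_{\textnormal{Iso}_{G_0}(x_0)}S_{x_0}$ then $g_0'=g_0k_0^{-1}$ and $x'=k_0x$ for some $k_0\in\textnormal{Iso}_{G_0}(x_0)$, whence $g_0'x'=(g_0k_0^{-1})(k_0x)=g_0x$; the computation at the level of arrows is identical with $\textnormal{Iso}_{G_1}(1_{x_0})$ in place of $\textnormal{Iso}_{G_0}(x_0)$.

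First I would settle that each $\Psi^j$ is a diffeomorphism onto its tube. This is exactly the classical equivariant tubular neighborhood theorem \cite[VI. Thm. 2.2]{B}, \cite[Thm. 4.4]{IK,K} applied twice: to the proper $G_0$-action on $(X_0,\tilde\eta^{(0)})$ with slice $S_{x_0}$ and to the proper $G_1$-action on $(X_1,\tilde\eta^{(1)})$ with slice $S_{1_{x_0}}$, both furnished by Proposition \ref{SliceThm}. By the very definition $\textnormal{Tub}_{\epsilon}^{\tilde{\eta}}(G_0\cdot x_0)=\theta^0(G_0,S_{x_0})$ and $\textnormal{Tub}_{\epsilon}^{\tilde{\eta}}(G_1\cdot 1_{x_0})=\theta^1(G_1,S_{1_{x_0}})$, so surjectivity of $\Psi^0$ and $\Psi^1$ is automatic, while their injectivity is the content of slice property (c).

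Next I would check that $\Psi$ is a morphism of Lie groupoids. Compatibility with source and target is immediate from the equivariance relations \eqref{Equivariant}, since $s_X(\Psi^1([g,p]))=s_X(gp)=s_G(g)s_X(p)=\Psi^0([s_G(g),s_X(p)])=\Psi^0(s_E([g,p]))$, and likewise for $t_X$ and for the unit map. For the composition I would invoke the groupoid structure on the associated bundle from Remark \ref{AssociatedBundle}: given composable $[g,p],[h,q]$ one has $m_E([g,p],[h,q])=[(g\tilde k^{-1})\ast h,(\tilde kp)\ast q]$, where $\tilde k\in s_G^{-1}(k_0)$ and $k_0\in\textnormal{Iso}_{G_0}(x_0)$ is an element determined by the composability data, namely $s_G(g)k_0^{-1}=t_G(h)$ and $k_0s_X(p)=t_X(q)$. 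A short check using \eqref{Equivariant} shows that both $((g\tilde k^{-1}),h)\in G_2$ and $((\tilde kp),q)\in X_2$ are composable, so the multiplicativity of the action \eqref{MultAction} together with the action axiom $\theta^1_a\circ\theta^1_b=\theta^1_{ab}$ yields $\Psi^1(m_E([g,p],[h,q]))=\theta^1_{g\tilde k^{-1}}(\tilde kp)\ast\theta^1_h(q)=(gp)\ast(hq)=m_X(\Psi^1([g,p]),\Psi^1([h,q]))$; in particular the auxiliary lift $\tilde k$ drops out. Since $\Psi^0$ and $\Psi^1$ are diffeomorphisms and $\Psi$ preserves all structural maps, it is an isomorphism of Lie groupoids; it is moreover $2$-equivariant because the $2$-action on the associated bundle is left translation on the $G$-factor, $a\cdot[g,p]=[ag,p]$, which $\Psi^1$ intertwines with $\theta^1$ via $\theta^1_a(gp)=\theta^1_{ag}(p)$, and symmetrically on objects.

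The hard part will be the compatibility with composition: it is the single point where the associated-bundle multiplication of Remark \ref{AssociatedBundle} and the restriction of $m_X$ must be reconciled, and this reconciliation is precisely what the multiplicativity identity \eqref{MultAction} provides. The delicate bookkeeping lies in verifying that the pairs appearing after applying $\Psi^1$ are genuinely composable and that the choice of lift $\tilde k$ cancels, both of which follow from \eqref{Equivariant} and the fact that $\theta^1$ is an honest $G_1$-action.
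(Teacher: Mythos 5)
Your proposal is correct and takes essentially the same approach as the paper's proof: there too one defines $\Psi^j([g_j,f_j])=\theta^j(g_j,f_j)$, cites the classical equivariant tube theorem (\cite[Thm. 3.57]{AB}, the same result you invoke via \cite{B,IK,K}) for the fact that each $\Psi^j$ is a $G_j$-equivariant diffeomorphism, and then checks compatibility with source, target and the composition of Remark \ref{AssociatedBundle} using Identities \eqref{Equivariant} and \eqref{MultAction}, with the auxiliary lift cancelling exactly as in your computation. Your extra verifications (well-definedness on the balanced product and the explicit $2$-equivariance of $\Psi$ with respect to left translation on the $G$-factor) are details the paper leaves implicit, not a different argument.
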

\begin{proof}
The left $2$-action of $G_1\rightrightarrows G_0$ on $G_1\times_{\textnormal{Iso}_{G_1}(1_{x_0})}S_{1_{x_0}}\rightrightarrows G_0\times_{\textnormal{Iso}_{G_0}(x_0)}S_{x_0}$ we will consider here is the one given by $\overline{g_j}\cdot [g_j,f_j]=[\overline{g_j}g_j,f_j]$ for all $\overline{g_j},g_j\in G_j$ and $f_j\in S_j$. Here $S_1=S_{1_{x_0}}$ and $S_0=S_{x_0}$.
The Lie groupoid isomorphism $\Psi$ is defined as 
$$\Psi^j([g_j,f_j])=\theta^j(g_j,f_j).$$
From the proof of Theorem 3.57 in \cite{AB} we already know that $\Psi^j$ is a $G_j$-equivariant diffeomorphism. Thus, we only have to check that $\Psi$ defines indeed a Lie groupoid morphism. By using the structural maps defined in Remark \ref{AssociatedBundle} we have that
$$(\Psi^0\circ s_E)([g,f])=\Psi^0([s_G(g),s_X(f)])=s_G(g)s_X(f)=s_X(gf)=(s_X\circ \Psi^1)([g,f]).$$
We can similarly obtain that $\Psi^0\circ t_E=t_X\circ \Psi^1$. Moreover, by applying Formula \eqref{MultAction} we get
\begin{eqnarray*}
\Psi^1([g,f]\ast [h,l]) &=& \Psi^1([(g\overline{g}^{-1})\ast h,(\overline{g}f)\ast l])=((g\overline{g}^{-1})\ast h)\cdot((\overline{g}f)\ast l)\\
&=& (gf)\ast (hl)=\Psi^1([g,f])\ast \Psi^1([h,l]).
\end{eqnarray*}
Hence, the result follows as desired.
\end{proof}
We finish the discussion of this subsection by briefly commenting that it is possible to define a groupoid $G$-orbit type topological stratification for $X_1\rightrightarrows X_0$. We say that $x_0$ and $y_0$ in $X_0$ have the same $G$-\emph{orbit type} if there exists a Lie groupoid isomorphism $\Phi$ between $G_1\cdot 1_{x_0}\rightrightarrows G_0\cdot x_0$ and $G_1\cdot 1_{y_0}\rightrightarrows G_0\cdot y_0$ such that $\Phi^1$ is $G_1$-equivariant. This automatically implies that $\Phi^0$ is $G_0$-equivariant. It is clear that this $G$-orbit type requirement defines an equivalent relation $\sim$  on $X_0$ for which we denote by $M_{x_0}^\sim$ the equivalent class at $x_0\in X_0$ associated to such a relation. We claim that $M_{x_0}^\sim$ is saturated in $X_0$, that is, $s_X^{-1}(M_{x_0}^\sim)=t_X^{-1}(M_{x_0}^\sim)$. If $p\in s_X^{-1}(M_{x_0}^\sim)$ then $s_X(p)\sim x_0$ so that there is a $(G_1\rightrightarrows G_0)$-equivariant isomorphism between $G_1\cdot 1_{s_X(p)}\rightrightarrows G_0\cdot s_X(p)$ and $G_1\cdot 1_{x_0}\rightrightarrows G_0\cdot x_0$. It is simple to check that $\Phi_p$ defined by $\Phi_p^1(g1_{t_X(p)})=g1_{s_X(p)}$ and $\Phi_p^0(g_0t_X(p))=g_0s_X(p)$ defines another $(G_1\rightrightarrows G_0)$-equivariant isomorphism between $G_1\cdot 1_{t_X(p)}\rightrightarrows G_0\cdot t_X(p)$ and $G_1\cdot 1_{s_X(p)}\rightrightarrows G_0\cdot s_X(p)$ so that by taking $\Phi\circ \Phi_p$ we conclude that $t_X(p)\sim x_0$. The other inclusion may be similarly verified. Thus, by setting $M_{1_{x_{0}}}^\sim=s_X^{-1}(M_{x_0}^\sim)=t_X^{-1}(M_{x_0}^\sim)$ we obtain a collection of topological groupoids $\lbrace M_{1_{x_{0}}}^\sim \rightrightarrows M_{x_0}^\sim\rbrace_{x_0\in X_0}$ which somehow stratifies the Lie groupoid $X_1\rightrightarrows X_0$. The previous observation suggests that by combining classical ideas from \cite[s. 3.5]{AB} with some of the results obtained in this section it could be possible to show that each of the $M_{1_{x_{0}}}^\sim \rightrightarrows M_{x_0}^\sim$ is a honest Lie subgroupoid. Nevertheless, the local $G$-orbit type notion from the classical case does not extended directly in our case. \emph{We conjecture that our guess is true}. That is, $M_{1_{x_{0}}}^\sim \rightrightarrows M_{x_0}^\sim$ is Lie subgroupoid of $X_1\rightrightarrows X_0$ for all $x_0\in X_0$.

\subsection{Orthogonal Lie 2-groups}\label{S:3:1}
In this subsection we derive another application of Theorem \ref{Existence2} which has to do with the construction of groupoid bi-invariant Riemannian metrics on compact Lie $2$-groups. To simplify computations in this subsection will be enough to consider only $1$-metrics. Let $G_1\rightrightarrows G_0$ be a Lie 2-group and consider the pairs $L=(L^1,L^0)$ and $R=(R^1,R^0)$ where $L^j$ and $R^j$ for $j=0,1$ are respectively the actions of $G_j$ on itself determined by left and right multiplications. It is simple to check that as consequence of Identity \eqref{MultiProduct} and the fact that the structural maps of $G_1\rightrightarrows G_0$ are Lie group homomorphisms it follows that $L$ and $R$ determine left $2$-actions of $G_1\rightrightarrows G_0$ on itself.
\begin{definition}
A Lie $2$-group $G_1\rightrightarrows G_0$ is said to be \emph{orthogonal} if it may be equipped with a $1$-metric for which both $L$ and $R$ are isometric $2$-actions. Such a $1$-metric will be called \emph{bi-invariant}.
\end{definition}
We will think of $1$-metrics on a Lie $2$-algebra $\mathfrak{g}_1\rightrightarrows \mathfrak{g}_0$ as pairs of inner products $\langle \cdot,\cdot\rangle=(\langle \cdot,\cdot\rangle^{(1)},\langle \cdot,\cdot\rangle^{(0)})$ verifying the required conditions of $1$-metric. Let $G_1\rightrightarrows G_0$ be a Lie 2-group with respective Lie 2-algebra $\mathfrak{g}_1\rightrightarrows \mathfrak{g}_0$. We denote by $\textnormal{Ad}=(\textnormal{Ad}^1,\textnormal{Ad}^0)$ the $2$-action of $G_1\rightrightarrows G_0$ on $\mathfrak{g}_1\rightrightarrows \mathfrak{g}_0$ determined by the adjoint actions $\textnormal{Ad}^j$ of $G_j$ on $\mathfrak{g}_j$ for $j=0,1$. This $2$-action will be called \emph{adjoint $2$-action} of $G_1\rightrightarrows G_0$. 

\begin{proposition}\label{Bi-invariant1}
A Lie $2$-group $G_1\rightrightarrows G_0$ is orthogonal if and only if there exists a $1$-metric $\langle \cdot,\cdot\rangle$ on its Lie $2$-algebra $\mathfrak{g}_1\rightrightarrows \mathfrak{g}_0$ for which the adjoint $2$-action is by linear isometries.
\end{proposition}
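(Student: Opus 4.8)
The plan is to reduce this to the classical fact that a Lie group carries a bi-invariant metric if and only if its Lie algebra carries an $\textnormal{Ad}$-invariant inner product, applied in parallel at the two levels $G_1$ and $G_0$, and then to verify that the groupoid $1$-metric compatibility transfers both to and from the linear $1$-metric compatibility on $\mathfrak{g}_1\rightrightarrows\mathfrak{g}_0$. First I would unwind the definitions. Writing $e_0$ for the unit of $G_0$ and $1_{e_0}$ for the unit of the group $G_1$, the $1$-metric convention together with Lemma \ref{Rmk1} shows that $L$ and $R$ are isometric $2$-actions exactly when $\eta^{(1)}$ is a bi-invariant metric on the Lie group $G_1$; bi-invariance of $\eta^{(0)}$ on the Lie group $G_0$ is then automatic by Lemma \ref{Rmk1}, and the two metrics are tied together by the $1$-metric conditions on $s_G,t_G,i_G$. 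On the infinitesimal side, $\textnormal{Ad}$ acting by linear isometries means precisely that $\langle\cdot,\cdot\rangle^{(1)}$ is $\textnormal{Ad}^1$-invariant on $\mathfrak{g}_1$ and $\langle\cdot,\cdot\rangle^{(0)}$ is $\textnormal{Ad}^0$-invariant on $\mathfrak{g}_0$, subject to the linear $1$-metric conditions on the structural maps of $\mathfrak{g}_1\rightrightarrows\mathfrak{g}_0$.

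For the forward implication I would set $\langle\cdot,\cdot\rangle^{(1)}:=\eta^{(1)}_{1_{e_0}}$ and $\langle\cdot,\cdot\rangle^{(0)}:=\eta^{(0)}_{e_0}$, the values at the units. Bi-invariance of each $\eta^{(j)}$ yields $\textnormal{Ad}^j$-invariance of its value at the unit by the classical argument, so $\textnormal{Ad}$ acts by isometries; and differentiating the groupoid $1$-metric conditions at the unit gives the linear $1$-metric conditions, since $s=d(s_G)_{1_{e_0}}$, $t=d(t_G)_{1_{e_0}}$ and $i=d(i_G)_{1_{e_0}}$ inherit at the unit the Riemannian-submersion and isometry properties of $s_G,t_G,i_G$. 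Conversely, starting from an $\textnormal{Ad}$-invariant linear $1$-metric I would extend $\langle\cdot,\cdot\rangle^{(1)}$ and $\langle\cdot,\cdot\rangle^{(0)}$ to left-invariant metrics $\eta^{(1)}$ on $G_1$ and $\eta^{(0)}$ on $G_0$; then $L$ is isometric by construction, and $\textnormal{Ad}^j$-invariance upgrades left-invariance to bi-invariance, so $R$ is isometric as well.

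The hard part will be checking that the left-invariant pair $(\eta^{(1)},\eta^{(0)})$ produced in the converse is a genuine groupoid $1$-metric, that is, that the compatibility holds at every point and not merely at the unit. Here I would exploit that the structural maps are group homomorphisms: the equivariance $s_G\circ L^1_g=L^0_{s_G(g)}\circ s_G$, together with the fact that $L^1_g$ and $L^0_{s_G(g)}$ are isometries of the left-invariant metrics, propagates the Riemannian-submersion property of $s_G$ from $1_{e_0}$ to an arbitrary $g\in G_1$, and the same argument applies to $t_G$. The matching of pushforwards $(s_G)_\ast\eta^{(1)}=(t_G)_\ast\eta^{(1)}=\eta^{(0)}$ then follows globally from the identity-level equality $s_\ast\langle\cdot,\cdot\rangle^{(1)}=t_\ast\langle\cdot,\cdot\rangle^{(1)}=\langle\cdot,\cdot\rangle^{(0)}$, and transversality to $s_G$ and $t_G$ reduces to linear transversality at the unit in the same equivariant manner. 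Finally, since $i_G$ is a group homomorphism it intertwines left translations, so its isometry property likewise spreads from the unit to all of $G_1$.

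I expect this propagation step to be the only genuine work: once one records that left-invariance plus the homomorphism property of $s_G,t_G,i_G$ make the pointwise groupoid $1$-metric conditions equivalent to their infinitesimal counterparts at the unit, the proposition follows from the standard bi-invariant/$\textnormal{Ad}$-invariant dictionary applied at each of the levels $G_1$ and $G_0$.
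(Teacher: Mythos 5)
Your proposal is correct and takes essentially the same route as the paper: the forward direction restricts the bi-invariant $1$-metric to the units, and the converse extends the $\textnormal{Ad}$-invariant linear $1$-metric by left translations and then propagates the isometry and Riemannian-submersion conditions from the unit to every point using that $s_G$, $t_G$, $i_G$ are group homomorphisms intertwining left translations --- precisely the two chains of equalities in the paper's proof. The only cosmetic difference is that you spell out the transversality check separately, whereas the paper folds it into the direct computation showing the pushforward at an arbitrary point of a fiber equals the left-invariant metric $\eta^{(0)}$.
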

\begin{proof}
It is clear that if $\eta$ is a bi-invariant $1$-metric then $\eta_e=(\eta^1_{e_1},\eta^0_{e_0})$, where $e_j$ is the identity element in $G_j$, defines a $1$-metric on $\mathfrak{g}_1\rightrightarrows \mathfrak{g}_0$ for which the adjoint $2$-action is by linear isometries. Conversely, given such a $\langle \cdot,\cdot\rangle$, we define $\eta$ on $G_1\rightrightarrows G_0$ by setting
$$\eta^{(1)}_g(v,w)=\langle d(L^1_{g^{-1}})_g(v),d(L^1_{g^{-1}})_g(w)\rangle^{(1)}.$$
The metric $\eta^{(0)}$ has a similar defining formula but using $L^0$ and $\langle \cdot,\cdot\rangle^{(0)}$ instead of $L^1$ and $\langle \cdot,\cdot\rangle^{(1)}$. It is clear that $\eta^{(1)}$ is bi-invariant. Therefore, it remains to prove that $\eta$ defines indeed a $1$-metric. Firstly, for $g\in G_1$ and $v,w\in T_g G_1$ we have
\begin{eqnarray*}
i^\ast_g\eta^{(1)}(v,w) & = & \langle d(L^1_{i(g)^{-1}})_{i(g)}(di_g(v)),d(L^1_{i(g)^{-1}})_{i(g)}(di_g(w))\rangle^{(1)}\\
&=&  \langle d(L^1_{i(g^{-1})}\circ i)_g(v),d(L^1_{i(g^{-1})}\circ i)_g(w)\rangle^{(1)}\\
& = & \langle d(i\circ L^1_{g^{-1}})_g(v),d(i\circ L^1_{g^{-1}})_g(w)\rangle^{(1)}\\
&=&\langle di_{e_1}(d(L^1_{g^{-1}})_g(v)), di_{e_1}(d(L^1_{g^{-1}})_g(v))\rangle^{(1)}=\eta^{(1)}_g(v,w).
\end{eqnarray*}
Secondly, let $g_0\in G_0$ and $v,w\in T_{g_0}G_0$. It is clear that there are $\tilde{v},\tilde{w}\in \textnormal{ker}(ds(g))^{\perp_{\eta^{(1)}}}$ with $s(g)=g_0$ such that $ds_g(\tilde{v})=v$ and $ds_g(\tilde{w})=w$. Thus
\begin{eqnarray*}
(s_\ast \eta^{(1)})_{g_0}(v,w) & = & \eta^{(1)}_g(\tilde{v},\tilde{w}) = \langle d(L^1_{g^{-1}})_g(\tilde{v}),d(L^1_{g^{-1}})_g(\tilde{w})\rangle^{(1)}\\
& = & \langle ds_{e_1}(d(L^1_{g^{-1}})_g(\tilde{v})), ds_{e_1}(d(L^1_{g^{-1}})_g(\tilde{w}))\rangle^{(0)}\\
&=& \langle d(s\circ L^1_{g^{-1}})_g(\tilde{v}),d(i\circ L^1_{g^{-1}})_g(\tilde{w})\rangle^{(0)}\\
&=& \langle d(L^0_{g_0^{-1}}\circ s)_g(\tilde{v}),d(L^0_{g_0^{-1}}\circ s)_g(\tilde{w})\rangle^{(0)}\\
&=& \langle d(L^0_{g_0^{-1}})_{g_0}(v),d(L^0_{g_0^{-1}})_{g_0}(v)\rangle^{(0)}=\eta^{(0)}_{g_0}(v,w).
\end{eqnarray*}
Analogously, $t_\ast \eta^{(1)}=\eta^{(0)}$. So, the result follows.
\end{proof}
From now on we assume that the Lie groups we are working with are connected. It is well known that bi-invariant metrics on a Lie group are in one-to-one correspondence with inner products on its Lie algebra for which the adjoint representation determines infinitesimal isometries \cite{Me,Mi}. A Lie $2$-algebra $\mathfrak{g}_1\rightrightarrows \mathfrak{g}_0$ is said to be \emph{orthogonal} if it admits a $1$-metric $\langle \cdot,\cdot\rangle=(\langle \cdot,\cdot\rangle^{(1)},\langle \cdot,\cdot\rangle^{(0)})$ for which the adjoint representation $\textnormal{ad}^1:\mathfrak{g}_1\to \textnormal{Der}(\mathfrak{g}_1)$ acts by infinitesimal isometries on $(\mathfrak{g}_1,\langle \cdot,\cdot\rangle^{(1)})$. Therefore, as consequence of Proposition \ref{Bi-invariant1} and \cite[Lem. 7.2]{Mi} we get:
\begin{corollary}\label{MilnorCharacterization}
A Lie $2$-group is orthogonal if and only if its Lie algebra $\mathfrak{g}_1\rightrightarrows \mathfrak{g}_0$ is orthogonal.
\end{corollary}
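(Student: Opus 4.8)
The plan is to combine Proposition \ref{Bi-invariant1} with the classical infinitesimal characterization of bi-invariance in \cite[Lem. 7.2]{Mi}. By Proposition \ref{Bi-invariant1}, orthogonality of $G_1\rightrightarrows G_0$ is equivalent to the existence of a $1$-metric $\langle\cdot,\cdot\rangle=(\langle\cdot,\cdot\rangle^{(1)},\langle\cdot,\cdot\rangle^{(0)})$ on its Lie $2$-algebra $\mathfrak{g}_1\rightrightarrows\mathfrak{g}_0$ for which the adjoint $2$-action is by linear isometries; under our $1$-metric convention this amounts to asking that $\textnormal{Ad}^1:G_1\to GL(\mathfrak{g}_1)$ take values in the orthogonal group of $(\mathfrak{g}_1,\langle\cdot,\cdot\rangle^{(1)})$. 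On the other hand, by definition $\mathfrak{g}_1\rightrightarrows\mathfrak{g}_0$ is orthogonal precisely when it admits such a $1$-metric for which $\textnormal{ad}^1:\mathfrak{g}_1\to\textnormal{Der}(\mathfrak{g}_1)$ acts by skew-symmetric endomorphisms of $(\mathfrak{g}_1,\langle\cdot,\cdot\rangle^{(1)})$. Thus both notions refer to the very same kind of $1$-metric on $\mathfrak{g}_1\rightrightarrows\mathfrak{g}_0$, and the whole statement reduces to matching, for a fixed such $1$-metric, the $\textnormal{Ad}^1(G_1)$-invariance of $\langle\cdot,\cdot\rangle^{(1)}$ with the infinitesimal isometry condition on $\textnormal{ad}^1$.

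First I would fix a $1$-metric $\langle\cdot,\cdot\rangle$ on $\mathfrak{g}_1\rightrightarrows\mathfrak{g}_0$ and observe that the two conditions to be compared are conditions on the single inner product $\langle\cdot,\cdot\rangle^{(1)}$ on $\mathfrak{g}_1$ alone; the remaining datum $\langle\cdot,\cdot\rangle^{(0)}$ and the compatibility with the structural maps of $\mathfrak{g}_1\rightrightarrows\mathfrak{g}_0$ are left untouched, so no $1$-metric needs to be modified when passing between the two statements. Then, using that $G_1$ is connected, I would invoke \cite[Lem. 7.2]{Mi} for the Lie group $G_1$ with Lie algebra $\mathfrak{g}_1$: an inner product on $\mathfrak{g}_1$ is invariant under $\textnormal{Ad}^1(G_1)$ if and only if each $\textnormal{ad}^1_X$, with $X\in\mathfrak{g}_1$, is skew-symmetric with respect to it. This is exactly the equivalence between the group-level isometry condition appearing in Proposition \ref{Bi-invariant1} and the infinitesimal isometry condition in the definition of an orthogonal Lie $2$-algebra.

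Putting these together yields both implications at once. If $G_1\rightrightarrows G_0$ is orthogonal, then Proposition \ref{Bi-invariant1} produces a $1$-metric with $\textnormal{Ad}^1(G_1)$-invariant $\langle\cdot,\cdot\rangle^{(1)}$, and \cite[Lem. 7.2]{Mi} makes $\textnormal{ad}^1$ act by infinitesimal isometries, so the same $1$-metric witnesses that $\mathfrak{g}_1\rightrightarrows\mathfrak{g}_0$ is orthogonal. Conversely, starting from a $1$-metric with $\textnormal{ad}^1$ acting by skew-symmetric derivations, connectedness of $G_1$ together with \cite[Lem. 7.2]{Mi} upgrades this to $\textnormal{Ad}^1(G_1)$-invariance, so Proposition \ref{Bi-invariant1} returns orthogonality of $G_1\rightrightarrows G_0$.

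I do not expect a genuine obstacle here, since the argument is essentially a transfer of the classical fact through Proposition \ref{Bi-invariant1}. The only points requiring care are that both orthogonality notions are phrased through a $1$-metric on the same Lie $2$-algebra, and that the comparison takes place entirely at the arrow level $\mathfrak{g}_1$, the object-level conditions on $\mathfrak{g}_0$ being automatic from the $1$-metric axioms together with Lemma \ref{Rmk1}. The one indispensable hypothesis is the connectedness of $G_1$, without which the passage from infinitesimal isometries back to $\textnormal{Ad}^1(G_1)$-invariance could fail.
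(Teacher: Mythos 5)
Your proposal is correct and takes essentially the same route as the paper, which likewise derives the corollary by combining Proposition \ref{Bi-invariant1} with the classical equivalence between $\textnormal{Ad}$-invariance and skew-symmetry of $\textnormal{ad}$ from \cite[Lem. 7.2]{Mi}, under the standing assumption that the Lie groups are connected. Your remark that the comparison happens entirely at the level of $(\mathfrak{g}_1,\langle\cdot,\cdot\rangle^{(1)})$, with the object-level condition on $\mathfrak{g}_0$ being automatic from the $1$-metric axioms via the argument of Lemma \ref{Rmk1}, is exactly the implicit reduction the paper relies on.
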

From Theorem \ref{Existence2} we get that:
\begin{corollary}\label{Bi-Existence}
Every Lie $2$-group $G_1\rightrightarrows G_0$ with $G_1$ compact can be endowed with a bi-invariant $1$-metric.
\end{corollary}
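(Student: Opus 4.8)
The plan is to obtain bi-invariance as invariance under a single auxiliary $2$-action and then invoke Theorem \ref{Existence2}. I would consider the product Lie $2$-group $G_1\times G_1\rightrightarrows G_0\times G_0$ and the maps
\[
\Theta^1((a,b),g)=a\cdot g\cdot b^{-1},\qquad \Theta^0((a_0,b_0),g_0)=a_0\, g_0\, b_0^{-1},
\]
given by simultaneous left and right multiplication, where all inverses are group inverses in $G_1$ and $G_0$. First I would verify that $\Theta=(\Theta^1,\Theta^0)$ is a $2$-action of $G_1\times G_1\rightrightarrows G_0\times G_0$ on $G_1\rightrightarrows G_0$. That $\Theta^1$ and $\Theta^0$ are honest left Lie group actions is immediate from associativity of the group products; that $\Theta$ is compatible with the source and target maps uses only that $s_G,t_G$ are group homomorphisms. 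The restrictions of $\Theta$ to the subgroups $G_1\times\{e\}$ and $\{e\}\times G_1$ recover exactly the left- and right-translation $2$-actions $L$ and $R$ (the inverse in the second slot being what turns right multiplication into a left action).

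The one point deserving care is multiplicativity of $\Theta^1$, and here the key auxiliary identity is the compatibility of group inversion with groupoid composition, namely $(b\ast b')^{-1}=b^{-1}\ast b'^{-1}$ for composable $b,b'$. This follows from the interchange law \eqref{MultiProduct} applied to $(b\ast b')\cdot(b^{-1}\ast b'^{-1})=(b\cdot b^{-1})\ast(b'\cdot b'^{-1})$, together with the fact that the group unit of $G_1$ is the identity arrow at the unit of $G_0$, so that the right-hand side collapses to the unit. Using this and two further applications of \eqref{MultiProduct}, one expands
\[
\Theta^1\big((a\ast a',\,b\ast b'),\,g\ast g'\big)=(a\ast a')\cdot(g\ast g')\cdot(b^{-1}\ast b'^{-1})=(a\,g\,b^{-1})\ast(a'\,g'\,b'^{-1}),
\]
which is precisely $\Theta^1((a,b),g)\ast\Theta^1((a',b'),g')$; compatibility with units is checked analogously. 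This confirms that $\Theta$ is a genuine $2$-action.

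With $\Theta$ in hand the conclusion is quick. Since $G_1$ is compact, so is $G_1\times G_1$, and the anchor $(s_G,t_G)\colon G_1\to G_0\times G_0$ is proper (a continuous map out of a compact space into a manifold), so $G_1\rightrightarrows G_0$ is a proper Lie groupoid and hence admits a $1$-metric by \cite[Thm. 4.13]{dHF}, making it a Riemannian groupoid. I would then apply Theorem \ref{Existence2} to the $2$-action $\Theta$ of $G_1\times G_1\rightrightarrows G_0\times G_0$ on this Riemannian groupoid to produce a $1$-metric $\overline{\eta}$ for which $\Theta$ is isometric. Restricting the invariance of $\overline{\eta}$ to the subgroups $G_1\times\{e\}$ and $\{e\}\times G_1$ then yields that both $L$ and $R$ act by isometries, so $\overline{\eta}$ is bi-invariant. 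The main obstacle is purely the bookkeeping of the previous paragraph, i.e.\ establishing that $\Theta$ is a $2$-action via the interchange-law identities; once this is in place the statement is a direct consequence of the existence theorem.
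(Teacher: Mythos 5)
Your proposal is correct and takes essentially the same route as the paper: the paper obtains this corollary as an immediate consequence of Theorem \ref{Existence2}, having already observed (via Identity \eqref{MultiProduct} and the homomorphism property of the structural maps) that $L$ and $R$ are $2$-actions. Your explicit two-sided action of the product Lie $2$-group $G_1\times G_1\rightrightarrows G_0\times G_0$, together with the properness observation that supplies the initial groupoid metric via \cite[Thm. 4.13]{dHF}, is simply a careful filling-in of the single application of Theorem \ref{Existence2} that the paper leaves implicit.
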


Given an orthogonal Lie $2$-algebra $\mathfrak{g}_1\rightrightarrows \mathfrak{g}_0$ we may split $\mathfrak{g}_1$ as a direct sum of ideals $\mathfrak{g}_1=\mathfrak{h}\oplus \mathfrak{h}^{\perp_1}$ where $\mathfrak{h}=\textnormal{ker}(s)$. As the unit map $u$ is a canonical bisection we would expect that $u(x)\in \mathfrak{h}^{\perp_1}$ for all $x\in \mathfrak{g}_0$ but, however, this is not true in general unless we assume ``non-canonical" identifications. We say that an orthogonal Lie $2$-algebra is \emph{trivial} if $\mathfrak{h}^{\perp_1}=u(\mathfrak{g}_0)$. This notion comes up by the following simple result.

\begin{proposition}\label{0-orthogonal}
	If $\mathfrak{g}_1\rightrightarrows \mathfrak{g}_0$ is a trivial orthogonal Lie $2$-algebra then $\textnormal{ad}^1_{u(x)}|_\mathfrak{h}=0$ for all $x\in \mathfrak{g}_0$. In consequence, $\mathfrak{h}$ is abelian and $\textnormal{im}(t|_\mathfrak{h})\subseteq \mathfrak{z}(\mathfrak{g}_0)$.
\end{proposition}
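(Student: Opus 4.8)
The plan is to exploit the decomposition recalled just before the statement: for a trivial orthogonal Lie $2$-algebra one has a direct sum of ideals $\mathfrak{g}_1=\mathfrak{h}\oplus \mathfrak{h}^{\perp_1}$ with $\mathfrak{h}=\textnormal{ker}(s)$ and, by triviality, $\mathfrak{h}^{\perp_1}=u(\mathfrak{g}_0)$. The elementary fact I would invoke is that the bracket between two ideals whose direct sum is the whole Lie algebra vanishes: if $\mathfrak{g}_1=\mathfrak{a}\oplus\mathfrak{b}$ with both summands ideals, then for $a\in\mathfrak{a}$ and $b\in\mathfrak{b}$ the element $[a,b]$ lies in $\mathfrak{a}\cap\mathfrak{b}=0$. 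Taking $\mathfrak{a}=\mathfrak{h}$ and $\mathfrak{b}=u(\mathfrak{g}_0)$ gives at once $[u(x),y]=0$ for all $x\in\mathfrak{g}_0$ and $y\in\mathfrak{h}$, which is precisely $\textnormal{ad}^1_{u(x)}|_\mathfrak{h}=0$.

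The two remaining consequences I would then read off from the Peiffer identities \eqref{PeifferLieAlgebra}, using the crossed module $(\mathfrak{g},\mathfrak{h},\partial,\mathcal{L})$ associated to $\mathfrak{g}_1\rightrightarrows\mathfrak{g}_0$, in which $\partial=t|_\mathfrak{h}$ and $\mathcal{L}_x=\textnormal{ad}^1_{u(x)}$. For the abelian claim, the second Peiffer identity yields, for $x,y\in\mathfrak{h}$, the equality $[x,y]_\mathfrak{h}=\mathcal{L}_{\partial(x)}y=\textnormal{ad}^1_{u(\partial(x))}y$; since $u(\partial(x))\in u(\mathfrak{g}_0)$, the first part forces this to vanish, so $\mathfrak{h}$ is abelian. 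For the centrality claim, the first Peiffer identity gives, for $x\in\mathfrak{g}_0$ and $y\in\mathfrak{h}$, the equality $[x,\partial(y)]_\mathfrak{g}=\partial(\mathcal{L}_xy)=t(\textnormal{ad}^1_{u(x)}y)$; again $\textnormal{ad}^1_{u(x)}y=0$, so $[x,\partial(y)]_\mathfrak{g}=0$ for every $x\in\mathfrak{g}_0$, which says exactly that $t(y)=\partial(y)\in\mathfrak{z}(\mathfrak{g}_0)$, i.e. $\textnormal{im}(t|_\mathfrak{h})\subseteq\mathfrak{z}(\mathfrak{g}_0)$.

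The argument is essentially formal once the ideal decomposition is available, so I do not anticipate a genuine obstacle; the only point demanding care is the bookkeeping with the crossed module dictionary, i.e. keeping straight that $\mathcal{L}_x=\textnormal{ad}^1_{u(x)}$ and $\partial=t|_\mathfrak{h}$ so that the abstract Peiffer relations become statements about the ambient bracket of $\mathfrak{g}_1$ and about the structure map $t$. I would also double-check that triviality is genuinely used precisely where I invoke $\mathfrak{h}^{\perp_1}=u(\mathfrak{g}_0)$, since without it the complement $\mathfrak{h}^{\perp_1}$ need not coincide with the image of the unit section and the vanishing $\textnormal{ad}^1_{u(x)}|_\mathfrak{h}=0$ could fail.
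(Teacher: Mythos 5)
Your proposal is correct, and its only real divergence from the paper is in how the key vanishing $\textnormal{ad}^1_{u(x)}|_\mathfrak{h}=0$ is obtained. You deduce it purely formally from the decomposition $\mathfrak{g}_1=\mathfrak{h}\oplus\mathfrak{h}^{\perp_1}$ into ideals: brackets between two ideals with trivial intersection vanish, and triviality identifies $\mathfrak{h}^{\perp_1}$ with $u(\mathfrak{g}_0)$. The paper instead inlines the metric computation: by $\textnormal{ad}$-invariance of $\langle\cdot,\cdot\rangle^{(1)}$ one has $\langle \textnormal{ad}^1_{u(x)}(y),z\rangle^{(1)}=\langle u(x),\textnormal{ad}^1_{y}(z)\rangle^{(1)}$, which vanishes because $\textnormal{ad}^1_y(z)\in\mathfrak{h}$ ($\mathfrak{h}$ is an ideal) while $u(x)\perp\mathfrak{h}$ (triviality), and then nondegeneracy forces $\textnormal{ad}^1_{u(x)}(y)=0$. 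These are the same mathematics packaged differently: the fact that $\mathfrak{h}^{\perp_1}$ is an ideal, which your lemma requires, is precisely where the $\textnormal{ad}$-invariance coming from orthogonality enters, and since the paper asserts that decomposition immediately before the proposition you are entitled to use it; your version has the advantage of isolating the metric input in one cited fact, the paper's of being self-contained. Your treatment of the two consequences is essentially identical to the paper's: the second Peiffer identity gives $[y,y']_\mathfrak{h}=\mathcal{L}_{\partial(y)}y'=\textnormal{ad}^1_{u(t(y))}(y')=0$, and the first Peiffer identity (equivalently, applying the homomorphism $t$ to $[u(x),y]=0$ and using $t\circ u=\textnormal{id}$, which is how the paper phrases it) gives $[x,t(y)]_{\mathfrak{g}_0}=0$, i.e.\ $\textnormal{im}(t|_\mathfrak{h})\subseteq\mathfrak{z}(\mathfrak{g}_0)$. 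One refinement of your closing caveat: besides checking where triviality is used, note that it is orthogonality (not merely the existence of the inner product) that makes $\mathfrak{h}^{\perp_1}$ an ideal rather than just a complementary subspace; without it your key lemma has no ideals to apply to.
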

\begin{proof}
	On the one hand, note that for any $y\in \mathfrak{h}$ and $z\in \mathfrak{g}$ one gets
	$$\langle \textnormal{ad}^1_{u(x)}(y),z\rangle^{(1)}=\langle u(x),\textnormal{ad}^1_{y}(z)\rangle^{(1)}=0,$$
	since $\mathfrak{h}$ is an ideal. Thus, as $\langle \cdot,\cdot\rangle^{(1)}$ is nondegenerate we get that $\textnormal{ad}^1_{u(x)}|_\mathfrak{h}=0$ for all $x\in \mathfrak{g}_0$. On the other hand, for all $y,y'\in \mathfrak{h}$ and $x\in \mathfrak{g}$ it follows that $[y,y']=\textnormal{ad}^1_{u(t(y))}(y')=0$ and $0=t([u(x),y])=[x,t(y)]$ since $t$ is a Lie algebra homomorphism.
\end{proof}
This in particular implies that the crossed module of Lie algebras associated to a trivial orthogonal Lie $2$-algebra is of the form $(\mathfrak{g},\mathfrak{h},\partial,0)$ with $\mathfrak{h}$ an abelian Lie algebra.
\begin{remark}
Suppose that we have a crossed module of Lie algebras $(\mathfrak{g},\mathfrak{h},\partial,\mathcal{L})$ which is $\mathcal{L}$-orthogonal in the sense of \cite{Ba,FMM}. That is, there exist inner products $\langle \cdot,\cdot\rangle_\mathfrak{g}$ and $\langle \cdot,\cdot\rangle_\mathfrak{h}$ such that the Lie algebra representation $\mathcal{L}:\mathfrak{g}\to\textnormal{Der}(\mathfrak{h})$ acts by infinitesimal isometries on $(\mathfrak{h},\langle \cdot,\cdot\rangle_\mathfrak{h})$ and the adjoint representation of $\mathfrak{g}$ acts by infinitesimal isometries on $(\mathfrak{g},\langle \cdot,\cdot\rangle_\mathfrak{g})$. Note that this directly implies that the adjoint representation of $\mathfrak{h}$ acts by infinitesimal isometries on $(\mathfrak{h},\langle \cdot,\cdot\rangle_\mathfrak{h})$. On the one hand, recall that the associated Lie $2$-algebra constructed with the crossed module data has $\mathfrak{g}_1=\mathfrak{h}\rtimes \mathfrak{g}$ with Lie algebra structure provided by the semi-direct product with respect to $\mathcal{L}$. Therefore, a straightforward computation allows us to conclude that the adjoint representation of $\mathfrak{g}_1$ acts by infinitesimal isometries with respect to $\langle \cdot,\cdot\rangle_\mathfrak{h}+\langle \cdot,\cdot\rangle_\mathfrak{g}$ if and only if $\mathcal{L}=0$. On the other hand, since the inversion $i:\mathfrak{g}_1\to \mathfrak{g}_1$ is given by $i(x,y)=(-x,y+\partial(x))$ for all $x,x'\in \mathfrak{h}$ and $y,y'\in \mathfrak{g}$ then it follows that $i$ is an isometry with respect to $\langle \cdot,\cdot\rangle_\mathfrak{h}+\langle \cdot,\cdot\rangle_\mathfrak{g}$ if and only if $\partial=0$. As consequence, there is no canonical correspondence between our notion of orthogonal Lie $2$-algebras and the notion of $\mathcal{L}$-orthogonal crossed module of Lie algebras which is known in the literature.
\end{remark}

\subsection{Some examples and constructions}\label{S:3:2}
It this short subsection we exhibit some examples and interesting constructions in which isometric Lie $2$-group actions naturally appear.

\begin{example}
Classical isometric actions of Lie groups $G$ on Riemannian manifolds $M$ are recovered from isometric $2$-actions of unit Lie $2$-groups $G \rightrightarrows G$ acting upon unit Riemannian groupoids $M \rightrightarrows M$.
\end{example}

\begin{example}
Let $(X_1\rightrightarrows X_0,\eta)$ be a Riemannian groupoid and $(\xi,v)$ be a complete multiplicative Killing vector field on $X_1\rightrightarrows X_0$. Here we consider $\xi$ a Killing vector field on $(X_1,\eta^{(1)})$ and $v$ a Killing vector field on $(X_0,\eta^{(0)})$. From \cite[Prop. 3.5]{MX} we know that the pair of flows defined by $(\xi,v)$ determine global automorphisms on $X_1\rightrightarrows X_0$ so that we get a well defined isometric $2$-action of $\mathbb{R}\rightrightarrows \mathbb{R}$ on $(X_1\rightrightarrows X_0,\eta)$. In particular, the flow of the multiplicative vector field $((1_\xi)_{X_1},\xi_{X_0})$ formed by the fundamental vector fields of an isometric $2$-action determines another isometric $2$-action.
\end{example}

\begin{example}\label{ExampleOrthogonal}
Let $G$ be an orthogonal Lie group and $H\leq G$ be a normal Lie subgroup. It is clear that $H$ acts on $G$ by left multiplication, leading to the action Lie groupoid $H\times G\rightrightarrows G$. Note that its space of arrows has a group structure, namely the semi-direct product by the
conjugation action $C_g(h)=ghg^{-1}$ of $G$ on $H$ so that we get a well defined Lie $2$-group. More importantly, by applying the gauge trick construction behind Proposition 4.7 and Example 4.9 in \cite{dHF} we conclude that it is possible to cook up a $1$-metric on $H\rtimes G\rightrightarrows G$ made out from the initial bi-invariant metric on $G$ in such a way it becomes an orthogonal Lie $2$-group.
\end{example}


\begin{example}
Let $(M,\mathcal{F})$ be a regular Riemannian foliation and consider a free and proper isometric foliated action $G\times (M,\mathcal{F})\to (M,\mathcal{F})$ of a Lie group $G$. From \cite[Thm. 3.7]{GZ} it is known that the Lie $2$-group $G\rtimes G\rightrightarrows G$, as defined in Example \ref{ExampleOrthogonal}, determines a canonical $2$-action on the holonomy groupoid $\textnormal{Hol}(M,\mathcal{F})\rightrightarrows M$ which  extends the given action of $G$ on $M$. The Riemannian metric on $M$ completely determines a $0$-metric on $\textnormal{Hol}(M,\mathcal{F})\rightrightarrows M$ which can be extended to a $1$-metric \cite[Ex. 3.12]{dHF}. Thus, the fact that $G$ acts on $M$ isometrically implies that the extended $2$-action of $G\rtimes G\rightrightarrows G$ on $\textnormal{Hol}(M,\mathcal{F})\rightrightarrows M$ is by isometries.
\end{example}

\begin{example}
If $(M,\eta^{(0)})$ is a Riemannian manifold then $\eta^{(1)}=\eta^{(0)}\oplus \eta^{(0)}$ defines a $1$-metric on the pair groupoid $M\times M\rightrightarrows M$. Thus, if $G$ is a Lie group acting freely, properly and isometrically on $(M,\eta^{(0)})$ then the unit Lie $2$-group $G\rightrightarrows G$ acts isometrically on $M\times M\rightrightarrows M$ and in light of Proposition \ref{PQuotient} it follows that the quotient groupoid $(M\times M)/G\rightrightarrows M/G$ canonically inherits a $1$-metric. If $G$ admits a bi-invariant metric then the previous procedure yields an easy way to construct $1$-metrics on the gauge groupoid associated to a principal $G$-bundle over a Riemannian manifold (see Example \ref{ExPrincipalWarping} below).
\end{example}

In the next construction we will consider the notion of multiplicative $2$-connection $\omega=(\omega^1,\omega^0)$ on a groupoid principal $2$-bundle $\pi:(P_1\rightrightarrows P_0)\to (X_1\rightrightarrows X_0)$ with structural Lie $2$-group $G_1\rightrightarrows G_0$ as defined for instance in \cite[s. 5]{CCK}. 
\begin{example}[Principal groupoid warping]\label{ExPrincipalWarping}
Suppose that $X_1\rightrightarrows X_0$ is a Lie groupoid equipped with a $1$-metric and that $G_1\rightrightarrows G_0$ is an orthogonal Lie $2$-group. Let us prove that there exists a $1$-metric $\overline{\eta}$ on $P_1\rightrightarrows P_0$ for which the $2$-action of $G_1\rightrightarrows G_0$ is isometric and such that $\pi=(\pi_1,\pi_0):(P_1\rightrightarrows P_0)\to (X_1\rightrightarrows X_0)$ is a Riemannian groupoid submersion. Consider the associated $1$-metric $\langle \cdot,\cdot\rangle$ on the Lie $2$-algebra $\mathfrak{g}_1\rightrightarrows \mathfrak{g}_0$ of $G_1\rightrightarrows G_0$ and define 
	$$\overline{\eta}^{(1)}(v,w)=\eta^{(1)}(d\pi_1(v),d\pi_1(w))+\langle \omega^1(v),\omega^1(w)\rangle^{(1)}.$$
	The metric $\overline{\eta}^{(0)}$ is similarly defined by using instead $\eta^{(0)}$, $\pi_0$, $\langle \cdot,\cdot\rangle^{(0)}$, and $\omega^0$. It is simple to check that this expression yields a well defined right $G_j$-invariant metric on $P_j$ for which $\pi_j:P_j\to X_j$ becomes a Riemannian submersion ($j=0,1$). This is because $\pi_j$ is constant along the action orbits, $\omega^j$ is of $\textnormal{Ad}^j$-invariant type, and $\textnormal{ker}(d\pi_j)^{\perp_{\overline{\eta}^{(j)}}}=\textnormal{ker}(\omega^j)$ (compare \cite[p. 467]{O'n}). Let us verify that $\overline{\eta}=(\overline{\eta}^{(1)},\overline{\eta}^{(0)})$ determines a $1$-metric on $P_1\rightrightarrows P_0$. Note that, by abusing a little on the notation, we may rewrite $\overline{\eta}^{(j)}$ in a simpler way as $\overline{\eta}^{(j)}=(\pi_j)^\ast\eta^{(j)}+(\omega^j)^\ast \langle \cdot,\cdot\rangle^{(j)}$. Recall that $\pi$ and $\omega:(TP_1 \rightrightarrows TP_0)\to (\mathfrak{g}_1\times P_1\rightrightarrows \mathfrak{g}_0\times P_0)$ are Lie groupoid morphisms. So, on the one hand we get
	\begin{eqnarray*}
		(i_P)^\ast \overline{\eta}^{(1)} & = & (\pi_1\circ i_P)^\ast\eta^{(1)}+(\omega^1\circ i_P)^\ast \langle \cdot,\cdot\rangle^{(1)}=(i_X\circ \pi_1)^\ast\eta^{(1)}+(d(i_G)_{e_1}\circ \omega^1)^\ast \langle \cdot,\cdot\rangle^{(1)}\\
		&=& (\pi_1)^\ast\eta^{(1)}+(\omega^1)^\ast \langle \cdot,\cdot\rangle^{(1)}=\overline{\eta}^{(1)}.
	\end{eqnarray*}
	On the other hand, if $v\in \textnormal{ker}(d(s_P)_{p})^{\perp_{\overline{\eta}^{(1)}}}$ then it is simple to verify that the identities $\pi_0\circ s_P=s_X\circ \pi_1$ and $\omega^0\circ s_P=d(s_G)_{e_1}\circ \omega^1$ imply that $d(\pi_1)_p(v)\in \textnormal{ker}(d(s_X)_{\pi_1(p)})^{\perp_{\eta^{(1)}}}$ and $\omega^1(v)\in \textnormal{ker}(d(s_G)_{e_1})^{\perp_{\langle \cdot,\cdot\rangle^{(1)}}}$. Let us pick $v_1,v_2\in \textnormal{ker}(d(s_P)_{p})^{\perp_{\overline{\eta}^{(1)}}}$. Thus,
	\begin{eqnarray*}
		\overline{\eta}^{(0)}_{s_P(p)}(ds_P(p)(v_1),ds_P(p)(v_2)) & = & \eta^{(0)}_{\pi_0(s_P(p))}(d(\pi_0)_{s_P(p)}(ds_P(p)(v_1)),d(\pi_0)_{s_P(p)}(ds_P(p)(v_2)))\\
		& + & \langle \omega^0(ds_P(p)(v_1)),\omega^0(ds_P(p)(v_2))\rangle^{(0)}\\
		& = & \eta^{(0)}_{s_X(\pi_1(p))}(d(s_X)_{\pi_1(p)}(d\pi_1(p)(v_1)),d(s_X)_{\pi_1(p)}(d\pi_1(p)(v_1)))\\
		& + & \langle d(s_G)_{e_1}(\omega^1(v_1)),d(s_G)_{e_1}(\omega^1(v_2))\rangle^{(0)}\\
		& = & \eta^{(1)}_{\pi_1(p)}(d\pi_1(p)(v_1),d\pi_1(p)(v_1)) +  \langle \omega^1(v_1),\omega^1(v_2)\rangle^{(0)}\\
		& = & \overline{\eta}^{(1)}(v_1,v_2).
	\end{eqnarray*}
	Analogously, it follows that $(t_P)_\ast \overline{\eta}^{(1)}=\overline{\eta}^{(0)}$. Hence, we have shown that $(P_1\rightrightarrows P_0,\overline{\eta})$ is a Lie groupoid equipped with a $1$-metric $\overline{\eta}$ for which $G_1$ acts isometrically on $(P_1,\overline{\eta})$ and such that $\pi$ is a Riemannian groupoid submersion, as claimed.
	
	After performing similar computations it is simple to verify an analogous result for $2$-metrics instead. Namely, let us now suppose that $X_2$ can be endowed with a $2$-metric $\eta^{(2)}$ and that $G_2$ admits a bi-invariant $2$-metric with associated $2$-metric $\langle \cdot,\cdot\rangle^{(2)}$ on $\mathfrak{g}_2$. If $\omega^2:TP_2\to \mathfrak{g}_2\times P_2$ denotes the induced connection $1$-form by $\omega$ on the principal bundle $\pi_2:P_2\to X_2$ with structural group $G_2$ then the formula $\overline{\eta}^{(2)}((v,w),(v',w'))=\eta^{(2)}(d\pi_2(v,w),d\pi_2(v',w'))+\langle \omega^2(v,w),\omega^2(v',w')\rangle^{(2)}$ defines a $2$-metric on $P_2$ for which the action of $G_2$ on $(P_2,\overline{\eta}^{(2)})$ is by isometries and $\pi_2$ becomes a Riemannian submersion.
\end{example}

\begin{example}
Let $(X_1\rightrightarrows X_0,\eta)$ be a Riemannian étale groupoid with $n=\dim X_0=\dim X_1$. We denote by $\pi_1:O(X_1)\to X_1$ and $\pi_0:O(X_0)\to X_0$ to the principal $O(n,\mathbb{R})$-bundles of orthonormal frames over $(X_1,\eta^{(1)})$ and $(X_0,\eta^{(0)})$, respectively. There exists a canonical Lie groupoid structure $O(X_1)\rightrightarrows O(X_0)$ with the property that $\pi:(O(X_1)\rightrightarrows O(X_0))\to (X_1\rightrightarrows X_0)$ becomes a principal $2$-bundle with structural Lie $2$-group $O(n,\mathbb{R})\rightrightarrows O(n,\mathbb{R})$. Moreover, it admits a canonical multiplicative $2$-connection $\omega=(\omega^1,\omega^0)$, where $\omega^1$ and $\omega^0$ are respectively determined by the Levi-Civita connections on $(X_1,\eta^{(1)})$ and $(X_0,\eta^{(0)})$. The source and target maps of $O(X_1)\rightrightarrows O(X_0)$ are given by
$$\tilde{s}(v_1,\cdots,v_n)=(ds(v_1),\cdots,ds(v_n))\quad\textnormal{and}\quad \tilde{t}(v_1,\cdots,v_n)=(dt(v_1),\cdots,dt(v_n)),$$
and the composition is
$$\tilde{m}((v_1,\cdots,v_n),(v_1',\cdots,v_n'))=(dm(v_1,v_1'),\cdots,dm(v_n,v_n')).$$
The other structural maps can be defined in a similar fashion. Note that the principal warping construction from Example \ref{ExPrincipalWarping} applies in this case.
\end{example}
Next construction comes motivated by a beautiful notion known in the literature as \emph{Cheeger deformation}. The classical construction can be found for instance in \cite[s. 6.1]{AB}.
\begin{example}[Cheeger groupoid deformation]
	Suppose that $(G_1\rightrightarrows G_0, Q)$ is an orthogonal Lie 2-group, with $G_1$ compact, acting isometrically on a Riemannian groupoid $(X_1\rightrightarrows X_0,\eta)$. On the Lie groupoid product $X_1\times G_1\rightrightarrows X_0\times G_0$ we can consider the 1-metric $\eta\oplus \frac{1}{\tau}Q$.
	
	There is a natural free $2$-action of $G_1\rightrightarrows G_0$ on $X_1\times G_1\rightrightarrows X_0\times G_0$ where the $G_j$-actions, $j=1,0$, on $X_j\times G_j$ are given by
	
	\begin{equation}\label{CheegerD}
		h_j\cdot(p_j,g_j)=(h_jp_j,h_jg_j),\qquad p_j\in X_j,\ h_j,g_j\in G_j.
	\end{equation} 
	
	We claim that the quotient groupoid $\frac{X_1\times G_1}{G_1}\rightrightarrows\frac{X_0\times G_0}{G_0}$ determined by the previous actions is isomorphic to $X_1\rightrightarrows X_0$. Let us consider the groupoid principal $2$-bundle $G_1\rightrightarrows G_0$ over the point groupoid $\lbrace e_1\rbrace\rightrightarrows\lbrace e_0\rbrace$ with structural Lie $2$-group $G_1\rightrightarrows G_0$. By applying Remark \ref{AssociatedBundle} we know that by using the $2$-action of $G_1\rightrightarrows G_0$ on $X_1\rightrightarrows X_0$ we can construct the associated Lie groupoid bundle $(X_1\times_{G_1}G_1\rightrightarrows X_0\times_{G_0}G_0)\to (\lbrace e_1\rbrace\rightrightarrows\lbrace e_0\rbrace)$. It is important to notice that $X_1\times_{G_1}G_1\rightrightarrows X_0\times_{G_0}G_0$ is precisely the quotient groupoid $\frac{X_1\times G_1}{G_1}\rightrightarrows\frac{X_0\times G_0}{G_0}$. Therefore, as the new Lie groupoid bundle has groupoid fiber $X_1\rightrightarrows X_0$ and base groupoid $\lbrace e_1\rbrace\rightrightarrows\lbrace e_0\rbrace$ then we have the desired isomorphism. Under this identification the canonical groupoid projection $\pi=(X_1\times G_1\rightrightarrows X_0\times G_0)\to (X_1\rightrightarrows X_0)$ is formed by the maps $\pi_j(p_j,g_j)=g_j^{-1}p_j$.
	
	Observe that the $2$-action \eqref{CheegerD} is also isometric. Thus, as consequence of Proposition \ref{PQuotient} there is a unique 1-metric $\eta_\tau$ on $X_1\rightrightarrows X_0$ making of the projection $\pi=(X_1\times G_1\rightrightarrows X_0\times G_0,\eta\oplus \frac{1}{\tau}Q)\to (X_1\rightrightarrows X_0,\eta_\tau)$ a Riemannian groupoid submersion. 
	
	It is important to point out that by construction $\eta_\tau$ goes to $\eta$ as $\tau$ goes to $\infty$ and the $2$-action of $G_1\rightrightarrows G_0$ on $(X_1\rightrightarrows X_0,\eta_\tau)$ is still isometric when $\tau>0$. More importantly, the 1-parameter family of $1$-metrics $\eta_\tau$ on $X_1\rightrightarrows X_0$ varies smoothly with $\tau$ and extends smoothly to $\tau = 0$ with $\eta_0 = \eta$. Hence, $\eta_\tau$ with $\tau\geq  0$ is a deformation of $\eta$ by other $(G_1\rightrightarrows G_0)$-invariant metrics on  $X_1\rightrightarrows X_0$ which we call \emph{Cheeger groupoid deformation} of $\eta$. The reader is recommended to visit \cite[s. 6,1]{AB} for specific details about the last assertions in the classical case. 
\end{example}

We end this section by commenting that our notion of isometric $2$-action can be used to develop a $2$-equivariant analogous of the results proved in \cite{OV} which are aimed at extending classical Morse theory to the context of Lie groupoids and their differentiable stacks.

\begin{example}[2-Equivariant Morse theory on groupoids]\label{2-EquiMorse}
Let $\theta$ be an isometric $2$-action of a Lie $2$-group $G_1\rightrightarrows G_0$ on a proper Riemannian groupoid $(X_1\rightrightarrows X_0,\eta)$. Assume that $G_1$ is compact so that we can ensure that such a $2$-action always exists. Suppose that there is a $G_0$-invariant basic function $f:X_0\to \mathbb{R}$, where by basic we mean that $s_X^\ast f= t_X^\ast f$. Note that the critical points of $f$ come in families of $G_0$-invariant saturated submanifolds in $X_0$ so we may impose a Morse--Bott (normal) nondegenerate condition along those critical submanifolds. As it was proven in \cite{OV}, examples of these kinds of functions are provided by the components of a moment map which is associated to a Hamiltonian 2-action of a Lie $2$-torus on a $0$-symplectic groupoid in the sense of \cite{hsz}.

Hence, without so many changes along the proofs in \cite{OV} it is possible to prove the following results.
\begin{itemize}
\item A $2$-equivariant version of the Morse lemma around a $G_0$-invariant nondegenerate critical saturated submanifold of $f$. This can be done by following two possible approaches. The first one is by using the $2$-equivariant linearization from Proposition \ref{Lin1} together with the classical equivariant Morse Lemma proved in \cite{W}. The second one is by applying the ideas of the  proof of the classical Morse--Bott lemma provided in \cite[s. 4.2 \& Thm. 4.5]{Mein} and then by constructing an Euler--like multiplicative vector field which must be made $G$-invariant by averaging with respect to the $2$-action $\theta$. 
\item To describe the topological behavior of the level subgroupoids of $f$ in a $2$-equivariant way. Firstly, the $2$-action $\theta$ induces $2$-actions of $G_1\rightrightarrows G_0$ on the level subgroupoids of $f$ since this is $G_0$-invariant. Secondly, the gradient vector field of $f$ with respect to $\eta$ is a $G$-invariant multiplicative vector field so that its flow determines a 1-parametric family of $2$-equivariant (local) Lie groupoid automorphisms of $X_1\rightrightarrows X_0$. As expected, such a flow allows to study how are the topology changes of our Lie groupoid whether or not we cross by a critical subgroupoid level of $f$. Namely, if $f^{-1}[a,b]$ does not contain critical points of $f$ then the subgroupoid levels $X_1^a$ and $X_1^b$ are $2$-equivariant isomorphic. Otherwise, if $f^{-1}[a,b]$ contains no critical points besides a $G_0$-invariant nondegenerate critical saturated submanifold $S$ then the subgroupoid level $X_1^b$ is $2$-equivariant homotopy equivalent to $X_1^a\cup _{\partial D_-(G_S)}D_-(G_S)$. Here $D_-(G_S)$ and $\partial D_-(G_S)$ are the groupoids of disks and spheres of $S$ which are defined with respect to $\eta$ and also depend on $f$, see \cite{OV}. In our case, the $2$-action $\theta$ also induces canonical $2$-actions of $G_1\rightrightarrows G_0$ on those groupoids. It is important to mention that these results strongly depend on the fact that our groupoid metric $\eta$ is invariant by the $2$-action of $G_1\rightrightarrows G_0$.
\item To construct an equivariant Morse--Bott double cochain complex which computes the equivariant cohomology of a 2-action as defined in \cite{OBT}. Such a double complex is an extension of the equivariant Austin--Braam' complex introduced in \cite[s. 4]{AB} to the 2-equivariant setting. Our approach mainly depends on the fact that the gradient vector field of $f$ with respect to $\eta$ is a $G$-invariant multiplicative vector field which implies that the stable and unstable groupoids of any $G_0$-invariant nondegenerate critical saturated submanifold of $f$ inherit $2$-actions of $G_1\rightrightarrows G_0$ induced by $\theta$. The details needed to construct such an equivariant Morse--Bott double cochain complex were already completed in \cite[s. 9.3]{OV}.
\end{itemize}
\end{example}

\section{Isometries and geometric Killing vector fields}\label{S:4}

Let $G$ be a Lie group, with Lie algebra $\mathfrak{g}$, acting isometrically on a Riemannian manifold $(M,\eta^{(0)})$. It is well known that the fundamental vector field of each element in $\mathfrak{g}$ determines a Killing vector field on $M$ so that we get a Lie algebra homomorphism from $\mathfrak{g}$ to the finite dimensional Lie subalgebra $\mathfrak{o}(M,\eta^{(0)})\leq \mathfrak{X}(M)$ of Killing vector fields on $M$. The aim of this section is to bring an infinitesimal description of an isometric Lie $2$-action. Our approach will lead us to the study an algebra of transversal infinitesimal isometries associated to any Riemannian $n$-metric on a Lie groupoid. These transversal isometries will give rise to a notion of geometric Killing vector field on a quotient Riemannian stack.

We start by describing what would be our attempt to set the \emph{diffeomorphism group} of a Lie groupoid. Some of the references we shall be following throughout are \cite{Ma,OW} and \cite[App. D]{An}. A \emph{bisection} of a Lie groupoid $X_1\rightrightarrows X_0$ is a smooth map $\sigma:X_0\to X_1$ such that $s_X\circ \sigma=\textnormal{id}_{X_0}$ and $\iota_{\sigma}:X_0\to X_0$ defined by $\iota_{\sigma}(x):=t_X(\sigma(x))$ is a diffeomorphism. The set of all bisections of $X_1\rightrightarrows X_0$ will be denoted by $\textnormal{Bis}(X)$. This has the structure of an infinite-dimensional Lie group where the multiplication of two bisections $\sigma$ and $\sigma'$ is given by $\sigma\bullet \sigma'(x):=\sigma((t_X\circ \sigma')(x))*\sigma'(x)$ for all $x\in X_0$; see for instance \cite{SW} and \cite[s. 1.4]{Ma}. Let us denote by $\textnormal{Aut}(X)$ the group of Lie groupoid automorphisms of $X_1\rightrightarrows X_0$. Given a bisection $\sigma \in \textnormal{Bis}(X)$ one has an inner automorphism $I_{\sigma}:X_1\to X_1$ defined by

\begin{equation}\label{BisAut}
I_{\sigma}(p):=\sigma(t_X(p))*p* i_X(\sigma(s_X(p))).
\end{equation}

Clearly, $I_{\sigma}$ covers the map $\iota_{\sigma}$. This inner automorphism allows us to define what we call the \emph{crossed module of automorphisms of a Lie groupoid} $(\textnormal{Aut}(X),\textnormal{Bis}(X),I,\alpha)$ where the map $\alpha$ is defined as $\alpha_{\Phi}(\sigma):=\Phi\circ \sigma \circ \phi^{-1}$ for all $\Phi \in \textnormal{Aut}(X)$ covering $\phi:X_0\to X_0$ and $\sigma \in \textnormal{Bis}(X)$. Accordingly, we have a 2-group $\textnormal{Bis}(X)\ltimes \textnormal{Aut}(X)\rightrightarrows \textnormal{Aut}(X)$ called the \emph{2-group of Lie groupoid automorphisms}. The first property we obtain of such a $2$-group is provided below.
\begin{proposition}
	The orbit space of the 2-group of Lie groupoid automorphisms equals the set of Lie groupoid automorphisms up to smooth natural equivalences. Namely:
$$\textnormal{Aut}(X)/\textnormal{Bis}(X)=\left\lbrace [\Phi]\,|\, \Phi\in \textnormal{Aut}(X),\quad \Psi\sim \Phi \Leftrightarrow \exists_{\alpha}\left(\Psi\stackrel{\alpha}{\Rightarrow}\Phi\right)\right\rbrace.$$
\end{proposition}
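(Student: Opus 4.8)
The plan is to unwind the definition of the orbit space of the underlying Lie groupoid of the $2$-group $\textnormal{Bis}(X)\ltimes\textnormal{Aut}(X)\rightrightarrows\textnormal{Aut}(X)$ and then identify the resulting equivalence relation with the existence of a natural equivalence. Recall from the crossed-module construction that an arrow of this $2$-group is a pair $(\sigma,\Phi)$ with $\sigma\in\textnormal{Bis}(X)$ and $\Phi\in\textnormal{Aut}(X)$, whose source is $\Phi$ and whose target is $I_{\sigma}\circ\Phi$ (since here $\rho=I$ and the group operation in $\textnormal{Aut}(X)$ is composition). Hence, by the very definition of orbit space, two automorphisms $\Phi,\Psi$ are identified in $\textnormal{Aut}(X)/\textnormal{Bis}(X)$ if and only if $\Psi=I_{\sigma}\circ\Phi$ for some $\sigma\in\textnormal{Bis}(X)$. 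Thus the whole statement reduces to the equivalence: there exists $\sigma\in\textnormal{Bis}(X)$ with $\Psi=I_{\sigma}\circ\Phi$ if and only if there exists a smooth natural equivalence between $\Phi$ and $\Psi$. I would remark at the outset that, because $X_1\rightrightarrows X_0$ is a groupoid, every natural transformation between two of its automorphisms is automatically pointwise invertible, so ``natural transformation'' and ``natural equivalence'' coincide and the direction $\Psi\Rightarrow\Phi$ versus $\Phi\Rightarrow\Psi$ is immaterial.

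For the forward implication, given a bisection $\sigma$ realizing $\Psi=I_{\sigma}\circ\Phi$, I would define $\alpha:=\sigma\circ\Phi^0\colon X_0\to X_1$, which is smooth and assigns to each $x$ the arrow $\sigma(\Phi^0(x))\colon\Phi^0(x)\to\iota_{\sigma}(\Phi^0(x))=\Psi^0(x)$. Plugging $\Phi^1(p)$ into the inner-automorphism formula \eqref{BisAut} and composing on the right with $\alpha(x)$, the factor $i_X(\sigma(\Phi^0(x)))\ast\sigma(\Phi^0(x))$ collapses to the unit $u_X(\Phi^0(x))$, which yields exactly the naturality square $\Psi^1(p)\ast\alpha(x)=\alpha(y)\ast\Phi^1(p)$ for every arrow $p\colon x\to y$; since each $\alpha(x)$ is an arrow in a groupoid it is invertible, so $\alpha$ is a natural equivalence.

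For the converse, starting from a smooth natural equivalence $\alpha\colon X_0\to X_1$ with $\alpha(x)\colon\Phi^0(x)\to\Psi^0(x)$ and naturality $\Psi^1(p)\ast\alpha(x)=\alpha(y)\ast\Phi^1(p)$, I would set $\sigma:=\alpha\circ(\Phi^0)^{-1}$. A direct check gives $s_X\circ\sigma=\textnormal{id}_{X_0}$ and $\iota_{\sigma}=\Psi^0\circ(\Phi^0)^{-1}$, a diffeomorphism, so $\sigma$ is a genuine bisection; evaluating \eqref{BisAut} on $\Phi^1(p)$ and substituting $\sigma(\Phi^0(x))=\alpha(x)$ and $\sigma(\Phi^0(y))=\alpha(y)$, the naturality identity together with the cancellation $\alpha(x)\ast i_X(\alpha(x))=u_X(\Psi^0(x))$ turns $I_{\sigma}(\Phi^1(p))$ into $\Psi^1(p)$, so that $\Psi=I_{\sigma}\circ\Phi$. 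The main obstacle, and really the only delicate point, is the bookkeeping of the groupoid composition convention in \eqref{BisAut}: one must track sources and targets carefully to see that the naturality square is precisely equivalent to the inner-automorphism identity, and to make sure the cancellations land on the correct units. Once these conventions are fixed, the two constructions $\sigma\mapsto\sigma\circ\Phi^0$ and $\alpha\mapsto\alpha\circ(\Phi^0)^{-1}$ are manifestly mutually inverse, the two equivalence relations coincide, and therefore the quotient sets are literally equal, which is the claimed identity.
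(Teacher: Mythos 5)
Your proof is correct and follows essentially the same route as the paper: both directions use exactly the paper's assignments $\alpha:=\sigma\circ\Phi^0$ and $\sigma:=\alpha\circ(\Phi^0)^{-1}$, with the same bookkeeping of sources and targets to match the naturality square $\Psi^1(p)\ast\alpha(x)=\alpha(y)\ast\Phi^1(p)$ against the inner-automorphism formula \eqref{BisAut}. If anything, you are slightly more explicit than the paper, spelling out the orbit relation $\Psi=I_{\sigma}\circ\Phi$ from the crossed-module structure, verifying the converse identity $I_{\sigma}\circ\Phi=\Psi$ rather than only checking that $\sigma$ is a bisection, and noting that in a groupoid every natural transformation is automatically an equivalence.
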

\begin{proof}
	Let us describe the orbit of an element in $\Phi\in \textnormal{Aut}(X)$ covering $\phi$. If we pick $\sigma\in \textnormal{Bis}(X)$ and $\Psi \in \textnormal{Aut}(X)$ covering $\psi$ such that $I_{\sigma}\Phi=\Psi$ then it follows that 
$$\Psi(p)=I_{\sigma}(\Phi(p))=\sigma(\phi(t_X(p)))*\Phi(p)*i_X(\sigma(\phi(s_X(p)))),$$
for some $p \in X_1$ so that $	\Psi(p)*\sigma(\phi(s_X(p)))=\sigma(\phi(t_X(p)))*\Phi(p)$. Therefore, by setting $\alpha:=\sigma\circ \phi$ we get a smooth natural transformation $\Phi\stackrel{\alpha}{\Rightarrow}\Psi$. Conversely, note that if $\Phi\stackrel{\alpha}{\Rightarrow}\Psi$  a smooth natural transformation then for some $p\in X$ it holds $\alpha(t_X(p))*\Phi(p)=\Psi(p)*\alpha(s_X(p))$, thus obtaining that $t_X(\Phi(p))=s_X(\alpha(t_X(p)))$ and $s_X(\Psi(p))=t_X(\alpha(s_X(p)))$. On the one hand, by setting $\sigma:=\alpha\circ \phi^{-1}$ it follows that $s_X\circ\sigma=\textnormal{id}_{X_0}$. On the other hand, observe that
$$ \psi(s_X(p))=t_X(\alpha(s_X(p)))=t_X(\sigma(\phi( s_X)))(p)=\iota_{\sigma}(\phi(s_X(p))).$$
Hence, we have obtained that $i_{\sigma}=\psi\circ \phi^{-1}$ so that it is a diffeomorphism. 
\end{proof}
Some simple examples which illustrate the naturality of the previous result are the following.

\begin{example}
If $G\rightrightarrows *$ is a Lie group then $\textnormal{Bis}(G)\simeq G$. We may think of $\textnormal{Bis}(G)$ as the subset in $\textnormal{Aut}(G)$ determined by conjugations with respect to the elements in $G$. This implies that the crossed module of automorphisms of $G$ is $(\textnormal{Aut}(G),G,j,c)$ where $j$ is the inclusion and $c$ is the identity representation. Thus, the orbit space $\textnormal{Aut}(G)/G$ corresponds to the set of automorphisms of $G$ up to conjugations. That is, the outer automorphisms of $G$.
\end{example}

\begin{example}\label{ExampleOrbit2-group}
Let $\pi:M\to N$ be a surjective submersion and let $M\times_{N}M\rightrightarrows M$ denote its corresponding submersion groupoid. A straightforward computation shows that $\textnormal{Aut}(M\times_{N}M)$ is in one-to-one correspondence with $\textnormal{Aut}(\pi)$ which stands for the set of pairs $(\widetilde{f},f)\in \textnormal{Diff}(M)\times \textnormal{Diff}(N)$ commuting with $\pi$ and that $\textnormal{Bis}(M\times_{N}M)$ corresponds to $\textnormal{Gau}(\pi)$ that is the set of pairs $(\widetilde{f},\textnormal{id})\in \textnormal{Aut}(\pi)$. In this case the crossed module of automorphisms of $M\times_{N}M$ is $(\textnormal{Aut}(\pi),\textnormal{Gau}(\pi),j,c)$ where $j$ is the inclusion and $c$ is the representation by conjugations. Therefore, the orbit space
$$\textnormal{Aut}(M\times_{N}M)/\textnormal{Bis}(M\times_{N}M)\simeq \textnormal{Aut}(\pi)/\textnormal{Gau}(\pi)\simeq  \textnormal{Diff}(N).$$
\end{example}

Let $G_1\rightrightarrows G_0$ be a Lie $2$-group acting on $X_1\rightrightarrows X_0$ by the left.

\begin{lemma}\label{2ActionDecomposition}
The normal subgroup $H=\textnormal{ker}(s_{G})$ acts on $X_1\rightrightarrows X_0$ by bisections and $G_0$ acts by Lie groupoid automorphisms. Moreover, the right multiplication map defined on $s_X$-fibers for each arrow is $H$-equivariant. 
\end{lemma}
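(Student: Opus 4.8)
The plan is to read the statement through the crossed--module decomposition $G_1=H\rtimes G_0$ recalled in Section \ref{S:2}, so that $H=\textnormal{ker}(s_G)$ sits inside $G_1$ as the pairs $(h,e_0)$ and $G_0\cong\{1_{g_0}:g_0\in G_0\}$ sits inside $G_1$ via the unit homomorphism $u_G$. For $h\in H$ I would set $\sigma_h(x):=h\,u_X(x)=\theta^1_h(u_X(x))$ and, for $g_0\in G_0$, I would consider the pair $(\theta^1_{1_{g_0}},\theta^0_{g_0})$. The three assertions then read: (i) each $\sigma_h$ is a bisection and $h\mapsto\sigma_h$ is a homomorphism into $\textnormal{Bis}(X)$; (ii) each $(\theta^1_{1_{g_0}},\theta^0_{g_0})$ lies in $\textnormal{Aut}(X)$ and $g_0\mapsto(\theta^1_{1_{g_0}},\theta^0_{g_0})$ is a homomorphism; and (iii) for each arrow $p\in X_1$ the right translation $R_p(q)=q\ast p$ intertwines the $H$--actions on the relevant source fibres. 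The only inputs needed are the equivariance relations \eqref{Equivariant}, the multiplicativity of the action \eqref{MultAction}, and the groupoid unit laws.

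For (i), since $h\in\textnormal{ker}(s_G)$ the first relation in \eqref{Equivariant} gives $s_X(\sigma_h(x))=s_G(h)\,x=x$, so $s_X\circ\sigma_h=\textnormal{id}_{X_0}$, while the second gives $\iota_{\sigma_h}(x)=t_X(\sigma_h(x))=t_G(h)\,x=\rho(h)\,x$, i.e.\ $\iota_{\sigma_h}=\theta^0_{\rho(h)}$, a diffeomorphism; hence $\sigma_h\in\textnormal{Bis}(X)$. To recognise the $H$--action as an action by bisections I would prove that $\theta^1_h=L_{\sigma_h}$, where $L_{\sigma}(p)=\sigma(t_X(p))\ast p$ denotes left translation: writing $p=e_{G_1}\,p$ and using $1_{e_0}=e_{G_1}$ together with \eqref{MultAction}, one computes
$$L_{\sigma_h}(p)=\big(h\,u_X(t_X(p))\big)\ast \big(e_{G_1}\,p\big)=(h\ast e_{G_1})\big(u_X(t_X(p))\ast p\big)=h\,p=\theta^1_h(p).$$
Since $L\colon\textnormal{Bis}(X)\to\textnormal{Diff}(X_1)$ is an injective homomorphism (one checks $L_{\sigma\bullet\sigma'}=L_\sigma\circ L_{\sigma'}$, and that $L_\sigma=\textnormal{id}$ forces $\sigma=u_X$), the chain $L_{\sigma_h\bullet\sigma_{h'}}=L_{\sigma_h}\circ L_{\sigma_{h'}}=\theta^1_h\circ\theta^1_{h'}=\theta^1_{hh'}=L_{\sigma_{hh'}}$ forces $\sigma_h\bullet\sigma_{h'}=\sigma_{hh'}$; alternatively this can be checked directly from the definition of $\bullet$, using the third relation in \eqref{Equivariant} to rewrite $u_X(\rho(h')x)=1_{\rho(h')}u_X(x)$ and then the crossed--module composition in $G_1$.

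For (ii), because $1_{g_0}$ is a unit arrow of $G_1$ we have $s_G(1_{g_0})=t_G(1_{g_0})=g_0$, so \eqref{Equivariant} yields $s_X\circ\theta^1_{1_{g_0}}=\theta^0_{g_0}\circ s_X$, $t_X\circ\theta^1_{1_{g_0}}=\theta^0_{g_0}\circ t_X$, and $\theta^1_{1_{g_0}}(u_X(x))=u_X(g_0x)$; applying \eqref{MultAction} to the composable pair $(1_{g_0},1_{g_0})$ (note $1_{g_0}\ast 1_{g_0}=1_{g_0}$) gives $\theta^1_{1_{g_0}}(p\ast q)=\theta^1_{1_{g_0}}(p)\ast \theta^1_{1_{g_0}}(q)$. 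Thus $(\theta^1_{1_{g_0}},\theta^0_{g_0})$ is a Lie groupoid morphism, invertible with inverse $(\theta^1_{1_{g_0^{-1}}},\theta^0_{g_0^{-1}})$; and since $g_0\mapsto 1_{g_0}$ is a group homomorphism, so is $g_0\mapsto(\theta^1_{1_{g_0}},\theta^0_{g_0})$ into $\textnormal{Aut}(X)$.

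For (iii) I would first note that $s_G(h)=e_0$ makes the $H$--action preserve every source fibre, $s_X(hq)=s_X(q)$, so $H$ acts on both $s_X^{-1}(t_X(p))$ and $s_X^{-1}(s_X(p))$, the domain and codomain of $R_p$. The equivariance $R_p(hq)=h\,R_p(q)$ then follows from a single application of \eqref{MultAction} to the composable pairs $(h,e_{G_1})$ in $G_1$ and $(q,p)$ in $X_1$, using $h\ast e_{G_1}=h$ and $e_{G_1}p=p$. I expect the main obstacle to be purely organisational: keeping the two products on $G_1$ apart—the group multiplication $\cdot$ and the groupoid composition $\ast$—and in particular identifying the unit arrow $1_{e_0}$ with the group identity $e_{G_1}$ so that \eqref{MultAction} can be applied. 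Once this bookkeeping is fixed, every verification collapses to one use of \eqref{Equivariant} or \eqref{MultAction} combined with a unit law.
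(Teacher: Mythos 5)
Your proof is correct and follows essentially the same route as the paper: you define $\sigma_h(x)=h\,u_X(x)$ and $\Sigma_{g_0}=(\theta^1_{1_{g_0}},\theta^0_{g_0})$ exactly as in the paper's proof, verify the bisection and automorphism properties via \eqref{Equivariant} and \eqref{MultAction}, and obtain the $H$-equivariance of $R_p$ by the same one-line application of \eqref{MultAction} with the pairs $(h,e_{G_1})$ and $(q,p)$. Your additional verification that $h\mapsto\sigma_h$ is a homomorphism into $(\textnormal{Bis}(X),\bullet)$ via the left-translation embedding $L$ is a sound refinement that the paper leaves implicit, but it does not change the approach.
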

\begin{proof}
	Consider $h\in H$ and define the map $\sigma_{h}:X_0\to X_1$ as $\sigma_h(x):=h1_x.$ It is clear that $s_X(\sigma_h(x))=x$ and $t(\sigma_h(x))=\rho(h)x$ so that $\sigma_h$ is a well defined bisection.  Let us now take $g\in G_0$ and define the map $\Sigma_g:X_1\to X_1$ as $\Sigma_g(x):=1_{g}x$. Equation \eqref{MultAction} implies that
	\[1_{g}(x*y)=(1_g*1_g)(p*q)=(1_{g}p)*(1_{g}q),\]
	for all $g\in G_0$ and $(p,q)\in X_2$, thus obtaining that $\Sigma_g$ is a Lie groupoid morphism which clearly satisfies $(\Sigma_g)^{-1}=\Sigma_{g^{-1}}$. Note that $s_{X}(hp)=s_{X}(p)$ for all $p\in X_1$ and $h\in H$ so that the left action of $H$ on $X_1$ preserves the $s_X$-fibers. Therefore, for each $y\xleftarrow[]{\it p}x$ the right action $R_{p}:s_X^{-1}(y)\to s_X^{-1}(x)$ satisfies that
	\[R_{p}(hq)=(hq)*(1_{e}p)=(h*1_e)(q*p)=hR_{p}(q).\]
	In consequence, $R_p$ is $H$-equivariant as claimed.
\end{proof}
It is clear that the same result can be obtained if we consider right 2-actions instead of left ones. Let $(G,H,\rho,\alpha)$ denote the crossed module of Lie groups associated to $G_1\rightrightarrows G_0$. Then:
\begin{lemma}\label{LemmaIso1}
There is a natural morphism of crossed modules of Lie groups $(\sigma,\Sigma):(G,H,\rho,\alpha) \to (\textnormal{Aut}(X),\textnormal{Bis}(X),I,\alpha)$ where $\sigma_h$ and $\Sigma_g$ are defined as in Lemma \ref{2ActionDecomposition}.
\end{lemma}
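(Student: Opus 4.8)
The plan is to check the four defining conditions for $(\sigma,\Sigma)$ to be a morphism of crossed modules of Lie groups: that $\Sigma\colon G\to\textnormal{Aut}(X)$ and $\sigma\colon H\to\textnormal{Bis}(X)$ are Lie group homomorphisms, that they are compatible with the boundary maps, $I\circ\sigma=\Sigma\circ\rho$, and that $\sigma$ is equivariant, $\sigma_{\,{}^{g}h}=\alpha_{\Sigma_g}(\sigma_h)$ for the conjugation action ${}^{g}h=1_g\cdot h\cdot 1_{g^{-1}}$ of $G$ on $H$. That $\sigma_h$ is a bisection and $\Sigma_g\in\textnormal{Aut}(X)$ (with $\Sigma_g^{-1}=\Sigma_{g^{-1}}$) is exactly the content of Lemma \ref{2ActionDecomposition}, and smoothness of $h\mapsto\sigma_h$ and $g\mapsto\Sigma_g$ follows from smoothness of the $2$-action; so only these four algebraic identities remain.

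I would treat the homomorphism properties first. For $\Sigma$, the identity $\Sigma_{g_1g_2}=\Sigma_{g_1}\circ\Sigma_{g_2}$ is immediate from $1_{g_1g_2}=1_{g_1}\cdot 1_{g_2}$ (as $u_G$ is a group homomorphism) together with the fact that $\theta^1$ is a left $G_1$-action on $X_1$. The homomorphism property of $\sigma$ is more delicate. Expanding the bisection product, $\sigma_{h_1}\bullet\sigma_{h_2}(x)=\sigma_{h_1}\!\big(t_X(\sigma_{h_2}(x))\big)*\sigma_{h_2}(x)$, and using \eqref{Equivariant} to compute $t_X(h_2 1_x)=\rho(h_2)x$ and $1_{\rho(h_2)x}=1_{\rho(h_2)}1_x$, I reduce to the identity $\big((h_1\cdot 1_{\rho(h_2)})1_x\big)*(h_2 1_x)=(h_1h_2)1_x$. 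One application of the multiplicativity \eqref{MultAction} of the action collapses the left-hand side to $\big((h_1\cdot 1_{\rho(h_2)})*h_2\big)1_x$, and then the interchange law \eqref{MultiProduct}, applied to the decomposition $h_1\cdot h_2=(h_1\cdot 1_{\rho(h_2)})*(e_1\cdot h_2)$ (with $e_1=1_{e_0}$ the unit of $G_1$), identifies $(h_1\cdot 1_{\rho(h_2)})*h_2$ with the group product $h_1 h_2$ in $H\subset G_1$. Hence $\sigma_{h_1}\bullet\sigma_{h_2}=\sigma_{h_1h_2}$.

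I expect the boundary compatibility $I_{\sigma_h}=\Sigma_{\rho(h)}$ to be the main obstacle, since it requires telescoping the triple product of \eqref{BisAut}. For $p\colon x\to y$ one has $i_X(\sigma_h(x))=i_X(h 1_x)=i_G(h)1_x$, the equivariance of the inverse map being a consequence of the $2$-action commuting with all structure maps; thus $I_{\sigma_h}(p)=(h1_y)*p*(i_G(h)1_x)$. The idea is to apply \eqref{MultAction} twice after inserting units: first $(h1_y)*p=(h*e_1)(1_y*p)=hp$, and then $(hp)*(i_G(h)1_x)=(h*i_G(h))(p*1_x)=1_{\rho(h)}p$, where $h*i_G(h)=1_{t_G(h)}=1_{\rho(h)}$ by the inverse axiom in $G_1\rightrightarrows G_0$. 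This gives $I_{\sigma_h}(p)=1_{\rho(h)}p=\Sigma_{\rho(h)}(p)$. The only real care is in verifying, before each use of \eqref{MultAction}, that the relevant pairs are composable both in $G_1\rightrightarrows G_0$ and in $X_1\rightrightarrows X_0$.

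Finally, for the equivariance I would note that $\Sigma_g$ covers $\phi=\theta^0_g\colon x\mapsto gx$, so that $\alpha_{\Sigma_g}(\sigma_h)=\Sigma_g\circ\sigma_h\circ\phi^{-1}$ evaluates to $\alpha_{\Sigma_g}(\sigma_h)(x)=1_g\big(h\,1_{g^{-1}x}\big)$. Rewriting $1_{g^{-1}x}=1_{g^{-1}}1_x$ by \eqref{Equivariant} and collapsing the three $G_1$-factors through the action yields $(1_g\cdot h\cdot 1_{g^{-1}})1_x={}^{g}h\,1_x=\sigma_{\,{}^{g}h}(x)$, which is precisely the required identity $\alpha_{\Sigma_g}(\sigma_h)=\sigma_{\,{}^{g}h}$. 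Together with the three identities above, this shows that $(\sigma,\Sigma)$ is a morphism of crossed modules of Lie groups, completing the proof.
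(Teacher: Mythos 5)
Your proof is correct and takes essentially the same route as the paper: the boundary compatibility $I\circ\sigma=\Sigma\circ\rho$ (two applications of \eqref{MultAction} after inserting units, with $h*i_G(h)=1_{\rho(h)}$) and the equivariance $\sigma_{\alpha_g(h)}=\alpha_{\Sigma_g}(\sigma_h)$ (via \eqref{Equivariant}) are computed exactly as in the paper's proof. The only difference is that you also verify explicitly that $\sigma$ and $\Sigma$ are group homomorphisms --- using the interchange law \eqref{MultiProduct} for the bisection product --- a point the paper leaves implicit in Lemma~\ref{2ActionDecomposition}'s assertion that $H$ and $G_0$ ``act'' by bisections and automorphisms, so your write-up is if anything slightly more complete.
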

\begin{proof}
Let us check that $\Sigma \circ \rho=I\circ \sigma$ and $\sigma_{\alpha_{g}(h)}=\alpha_{\Sigma_g}(\sigma_h)$ for all $g \in G$ and $h\in H$. Firstly, for $ p\in X_1$ and $h\in H$ we obtain
\begin{eqnarray*}
		I_{\sigma_h}(p)&=&\sigma_h(t_X(p))*p*i(\sigma_h(s_X(p)))=(h1_{t_X(p)})*p*i(h1_{s_X(p)})\\
		&=&(h*e)(1_{t_X(p)}*p)*i(h1_{s_X(p)})=hp*i_{G}(h)1_{s(p)}=1_{\rho(h)}p=\Sigma_{\rho(h)}(p).
	\end{eqnarray*}
	Secondly, for $x \in X_0$, $g\in G_0$ and $h\in H$ we get
$$
\alpha_{\Sigma_g}(\sigma_h)(x) =\Sigma_g(h1_{(g^{-1}x)})=1_gh1_{g^{-1}}1_x=\alpha_{g}(h)1_x=\sigma_{\alpha_g(h)}(x).
$$
\end{proof}
We are now in conditions to define an \emph{infinitesimal 2-action} associated to a right Lie 2-group action. In order to do so we need to introduce the following structure. Let $A_X$ be the Lie algebroid of $X_1\rightrightarrows X_0$ and for each $\xi \in \mathfrak{g}_0$ we denote its fundamental vector fields as $\tilde{\xi}$. We also denote by $\mathfrak{X}_{m}(X)$ the set of multiplicative vector fields on $X_1\rightrightarrows X_0$ and by $(\mathfrak{X}_{m}(X),\Gamma(A_{X}),\delta,D)$ its associated crossed module. Here we have that $\delta(\alpha)=\alpha^r-\alpha^l$ and $D_{(\xi,v)}\alpha=[\xi,\alpha^r]|_{X_0}$ for all $\alpha\in \Gamma(A_X)$ and $(\xi,v)\in \mathfrak{X}_{m}(X)$. The reader is recommended to visit \cite[s. 7.1]{OW} for getting specific details.

\begin{theorem}\label{InfinitesimalAction}
Suppose that we have a right Lie 2-action of $G_1\rightrightarrows G_0$ on $X_1\rightrightarrows X_0$. Let $(\mathfrak{g},\mathfrak{h},\partial,\mathcal{L})$ be the crossed module of Lie algebras associated to the Lie $2$-algebra of $G_1\rightrightarrows G_0$. Then there is a canonical homomorphism of Lie 2-algebra $j=(j_{-1},j_0):(\mathfrak{g},\mathfrak{h},\partial,\mathcal{L})\to (\mathfrak{X}_{m}(X),\Gamma(A_{X}),\delta,D)$ defined by $j_{-1}(\xi)=\left.\tilde{\xi}\right|_{X_0}$ and  $j_0(\zeta)=(\tilde{1_{\zeta}},\tilde{\zeta})$ for all $\xi \in \mathfrak{h}$ and $\zeta \in \mathfrak{g}$.
\end{theorem}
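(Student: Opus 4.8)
The plan is to realize $(j_{-1},j_0)$ as the infinitesimal counterpart, under the Lie functor, of the morphism of crossed modules of Lie groups $(\sigma,\Sigma)$ furnished by the right-action analogue of Lemma \ref{LemmaIso1}, where now $\sigma_h(x)=1_xh$ and $\Sigma_g(p)=p1_g$. Following \cite{OW}, $\Gamma(A_X)$ is the Lie algebra of $\textnormal{Bis}(X)$, the space $\mathfrak{X}_m(X)$ is the Lie algebra of $\textnormal{Aut}(X)$, and under these identifications $(\mathfrak{X}_m(X),\Gamma(A_X),\delta,D)$ is precisely the crossed module of Lie algebras obtained by differentiating $(\textnormal{Aut}(X),\textnormal{Bis}(X),I,\alpha)$. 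Since the Lie functor sends a morphism of crossed modules of Lie groups to a morphism of crossed modules of Lie algebras, putting $j_{-1}=d\sigma$ and $j_0=d\Sigma$ automatically produces a morphism $(j_{-1},j_0):(\mathfrak{g},\mathfrak{h},\partial,\mathcal{L})\to(\mathfrak{X}_m(X),\Gamma(A_X),\delta,D)$; in particular the identities $j_0\circ\partial=\delta\circ j_{-1}$ and $j_{-1}(\mathcal{L}_\zeta\xi)=D_{j_0(\zeta)}j_{-1}(\xi)$, together with the fact that $j_{-1}$ and $j_0$ are Lie algebra homomorphisms, all follow by functoriality from the group-level identities $\Sigma\circ\rho=I\circ\sigma$ and $\sigma_{\alpha_g(h)}=\alpha_{\Sigma_g}(\sigma_h)$ established there. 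It then only remains to compute the two derivatives explicitly and to check that they take values in $\Gamma(A_X)$ and $\mathfrak{X}_m(X)$, respectively.

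For $\xi\in\mathfrak{h}$ I would differentiate the curve of bisections $t\mapsto\sigma_{\exp(t\xi)}$ at $t=0$. Since $\sigma_{\exp(t\xi)}(x)=1_x\exp(t\xi)$ and $\sigma_{\exp(0)}=u_X$ is the unit bisection, its derivative at $x\in X_0$ is $\frac{d}{dt}|_{t=0}\,1_x\exp(t\xi)=\tilde{\xi}(1_x)$, whence $j_{-1}(\xi)=\tilde{\xi}|_{X_0}$, with $\tilde{\xi}$ the fundamental vector field of $\xi\in\mathfrak{g}_1$ for the $G_1$-action on $X_1$. To see that this is a section of $A_X=\textnormal{ker}(ds_X)|_{X_0}$, I would differentiate the source-equivariance $s_X(pg)=s_X(p)s_G(g)$ of the action (the right-action form of \eqref{Equivariant}) along $g=\exp(t\xi)$: this gives $ds_X(\tilde{\xi}(p))=\widetilde{s(\xi)}(s_X(p))$, where $s(\xi)=ds_G(\xi)\in\mathfrak{g}_0$ and $\widetilde{s(\xi)}$ denotes its fundamental vector field on $X_0$. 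This vanishes because $\xi\in\mathfrak{h}=\textnormal{ker}(s)$, so $\tilde{\xi}$ is everywhere tangent to the $s_X$-fibres and its restriction to the units is a genuine section of $A_X$.

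For $\zeta\in\mathfrak{g}$ I would differentiate $t\mapsto\Sigma_{\exp(t\zeta)}$ at $t=0$. On arrows $\Sigma_{\exp(t\zeta)}(p)=p\,u_G(\exp(t\zeta))$, whose $t$-derivative is the fundamental vector field on $X_1$ of $u(\zeta)=1_\zeta\in\mathfrak{g}_1$, namely $\tilde{1_\zeta}$; on objects it covers $x\mapsto x\exp(t\zeta)$, whose derivative is $\tilde{\zeta}$. Thus $j_0(\zeta)=(\tilde{1_\zeta},\tilde{\zeta})$. That this pair is a multiplicative vector field follows from the flow characterization: its flow on arrows is the one-parameter group $t\mapsto\Sigma_{\exp(t\zeta)}$, which consists of Lie groupoid automorphisms by the right-action analogue of Lemma \ref{2ActionDecomposition}, and a vector field on $X_1\rightrightarrows X_0$ whose (local) flow is by groupoid automorphisms is multiplicative. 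Hence $j_0(\zeta)\in\mathfrak{X}_m(X)$, as needed.

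The step I expect to be the main obstacle is the infinite-dimensional differentiation underlying the whole scheme, namely justifying rigorously that $\textnormal{Lie}(\textnormal{Bis}(X))\cong\Gamma(A_X)$ and $\textnormal{Lie}(\textnormal{Aut}(X))\cong\mathfrak{X}_m(X)$ as crossed modules, with $\delta$ and $D$ arising as the differentials of $I$ and $\alpha$; for this one must lean on the regularity results of \cite{OW}. Granting that, the argument reduces to the two elementary derivative computations above together with careful bookkeeping of the right-action conventions. One could alternatively bypass the Lie functor and verify the three crossed-module identities directly, but this would merely reproduce the infinitesimal shadows of the identities already proved in Lemma \ref{LemmaIso1}, so the functorial route seems preferable.
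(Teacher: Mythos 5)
Your well-definedness computations are correct and coincide with the paper's: differentiating the equivariance $s_X(pg)=s_X(p)s_G(g)$ to see that $\tilde{\xi}$ is tangent to the $s_X$-fibres for $\xi\in\mathfrak{h}$, and recognizing $(\tilde{1_{\zeta}},\tilde{\zeta})$ as multiplicative because its flow $t\mapsto\Sigma_{\exp(t\zeta)}$ consists of groupoid automorphisms, is exactly how the paper exploits Lemma \ref{2ActionDecomposition}. The genuine gap is the step to which you delegate all remaining content: you obtain the crossed-module identities and the homomorphism property of $j_{-1},j_0$ ``by functoriality'' from Lemma \ref{LemmaIso1}, and this rests on the assertion that $(\mathfrak{X}_{m}(X),\Gamma(A_{X}),\delta,D)$ is the crossed module of Lie algebras of $(\textnormal{Aut}(X),\textnormal{Bis}(X),I,\alpha)$ together with an applicable Lie functor for infinite-dimensional crossed modules of Lie groups. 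No cited reference provides this. The paper \cite{OW} constructs the Lie $2$-algebra of multiplicative vector fields \emph{directly} at the infinitesimal level and contains no regularity or differentiation statement for $\textnormal{Aut}(X)$; the result \cite[Thm. D]{SW} equips only $\textnormal{Bis}(X)$ with a Lie group structure having Lie algebra $\Gamma(A_X)$, and under a compactness hypothesis on $X_0$ that the theorem does not assume. There is no established Lie group structure on $\textnormal{Aut}(X)$ with Lie algebra $\mathfrak{X}_m(X)$ (already $\textnormal{Diff}(X_0)$ for non-compact $X_0$ is delicate), hence no smooth crossed module of Lie groups to differentiate. Consequently the identities $\delta(j_{-1}(\xi))=j_0(\partial\xi)$ and $j_{-1}(\mathcal{L}_{\zeta}\xi)=D_{j_0(\zeta)}(j_{-1}(\xi))$ are not actually proved by your argument.

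The repair is precisely the alternative you dismiss in your closing sentence, and it is what the paper does: verify both identities by finite-dimensional flow computations, with no infinite-dimensional differentiation anywhere. For the first, one computes the flow of $\delta(j_{-1}(\xi))=(\tilde{\xi}|_{X_0})^r-(\tilde{\xi}|_{X_0})^l$ using the multiplicativity relation \eqref{MultAction}:
\[
\varphi_t(p)=1_{t_X(p)}\exp(t\xi)*p*i_X\bigl(1_{s_X(p)}\exp(t\xi)\bigr)=p\bigl(\exp(t\xi)*i_G(\exp(t\xi))\bigr)=p\,1_{\exp(t\,\partial\xi)},
\]
which is the flow of $j_0(\partial\xi)$. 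For the second, note that by definition $D_{j_0(\zeta)}(j_{-1}(\xi))=[\tilde{1_{\zeta}},(\tilde{\xi}|_{X_0})^r]|_{X_0}$, so you first need the right-invariance $d(R_p)_q(\tilde{\xi}_q)=\tilde{\xi}_{q*p}$ in order to identify $(\tilde{\xi}|_{X_0})^r$ with $\tilde{\xi}$; this follows from the $H$-equivariance of right translations, the part of Lemma \ref{2ActionDecomposition} your sketch does not use. Then $[\tilde{1_{\zeta}},\tilde{\xi}]=\widetilde{[1_{\zeta},\xi]}$ --- fundamental vector fields of a \emph{right} action form a Lie algebra homomorphism, which is also where the homomorphism property of $j_{-1}$ and $j_0$ comes from and the reason the statement is formulated for right actions --- and $\mathcal{L}_{\zeta}\xi=[1_{\zeta},\xi]$ gives $D_{j_0(\zeta)}(j_{-1}(\xi))=j_{-1}(\mathcal{L}_{\zeta}\xi)$. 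These computations use only the standard theory of fundamental vector fields, so the theorem never needs $\textnormal{Bis}(X)$ or $\textnormal{Aut}(X)$ to be Lie groups.
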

\begin{proof}
To see that $j$ is a well defined morphism we have to check that for any $\xi \in \mathfrak{h}$ its fundamental vector field $\tilde{\xi}$ belongs to $\mathfrak{X}_{inv}^{s}(X)$ and that for any $\zeta \in \mathfrak{g}$ it holds that $(\tilde{1_{\zeta}},\tilde{\zeta})$ is in $\mathfrak{X}_{m}(X)$. The latter assertion is clear since if $\exp{t\zeta}\in G_0$ then Lemma \ref{2ActionDecomposition} implies that the pair of flows $(\varphi_t^{\tilde{1_\zeta}},\varphi_t^{\tilde{\zeta}})$ determines a Lie groupoid morphism. Now, if $\xi \in \mathfrak{h}$ then again from Lemma \ref{2ActionDecomposition} it follows that the flow of its fundamental vector field lies inside the $s$-fibers since $s_{X}(\varphi_t^{\tilde{\xi}}(p))=s_{X}(p\exp(t\xi))=s_{X}(p)$. Therefore, $\tilde{\xi}\in \mathfrak{X}^s(X)$. Furthermore, for $y\xleftarrow[]{\it p}x$ and $q\in s_X^{-1}(y)$ one has that
$$
R_{p}(\varphi^{\tilde{\xi}}_t(q)) =(q\exp(t\xi))*p=(q*p)(\exp{t\xi}*1_e)=\varphi_t^{\tilde{\xi}}(q*p)=\varphi_t^{\tilde{\xi}}(R_p(q)),
$$
thus obtaining that $d(R_p)_q(\tilde{\xi}_q)=\tilde{\xi}_{q*p}$ so that $\tilde{\xi}\in \mathfrak{X}_{inv}^s(X)$ and $\left.\tilde{\xi}\right|_{X_0}\in \Gamma(A_X)$.

Let us finally verify that for $\xi\in \mathfrak{h}$ and $\zeta \in \mathfrak{g}$ it satisfies that $\delta(j_{-1}(\xi))=j_{0}(\partial \xi)$ and $j_{-1}(\mathcal{L}_{\zeta}\xi)=D_{j_0(\zeta)}(j_{-1}(\xi))$. On the one hand, by using the flow of vector field $\delta(j_0(\xi))=\left.\tilde{\xi}\right|_{X_0}^r-\left.\tilde{\xi}\right|_{X_0}^l$ and Equation \eqref{MultAction} we get
\begin{eqnarray*}
\varphi_t^{\delta(j_{-1}(\xi))}(p)&=&\varphi_{t}^{\tilde{\xi}}(1_{t_{X}(p)})*p*\iota_X(\varphi_t^{\tilde{\xi}}(1_{s_X(p)}))=1_{t_{X}(p)}\exp(t\xi)*p*i_X(1_{s_{X}(p)}\exp(t\xi))\\
&=&(1_{t_X(p)}*p)\exp(t\xi)*1_{s_X(p)}i_G(\exp(t\xi))=p(\exp(t\xi)*i_G(\exp(t\xi)))\\
&=&p(1_{t_G(\exp(t\xi))})=p1_{\exp(t\partial(\xi))}=p\exp(t1_{\partial \xi})=\varphi_t^{{j_0(\partial \xi)}}(p).
	\end{eqnarray*}
	Hence, $\delta(j_{-1}(\xi))=j_{0}(\partial \xi)$. On the other hand, observe that
	\[    D_{j_0(\zeta)}(j_{-1}(\xi))=\left.\left[\tilde{1_{\zeta}},\tilde{\xi}\right]\right|_{X_0}=\left.\tilde{\left[1_{\zeta},\xi\right]}\right|_{X_0}=(\left.\tilde{\mathcal{L}_{\zeta}(\xi))}\right|_{X_0}=j_{-1}(\mathcal{L}_{\zeta}\xi).\]
\end{proof}

Let us now put a Riemannian structure into play. Suppose that $X_1\rightrightarrows X_0$ can be equipped with a 0-metric $\eta$ and consider the following sets 
$$\textnormal{Bis}_{\eta}(X)=\left\lbrace \sigma \in \textnormal{Bis}(X)\,|\, \iota_{\sigma}^*\eta=\eta\right\rbrace\quad\textnormal{and}\quad \mathrm{Iso}(X,\eta)=\left \lbrace (\Phi,\phi)\in \mathrm{Aut}(X)\,|\, \phi^*\eta=\eta \right\rbrace.$$

\begin{proposition}\label{IsoStrong1}
The quadruple $(\mathrm{Iso}(X,\eta),\textnormal{Bis}_{\eta}(X),I,\alpha)$ determines a sub-crossed module structure  of $(\textnormal{Aut}(X),\textnormal{Bis}(X),I,\alpha)$.
\end{proposition}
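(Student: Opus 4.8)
The plan is to check the four conditions defining a sub-crossed module: that $\mathrm{Iso}(X,\eta)$ and $\textnormal{Bis}_\eta(X)$ are subgroups of $\textnormal{Aut}(X)$ and $\textnormal{Bis}(X)$ respectively, that $I$ restricts to a homomorphism $\textnormal{Bis}_\eta(X)\to \mathrm{Iso}(X,\eta)$, and that the action $\alpha$ of $\mathrm{Iso}(X,\eta)$ preserves $\textnormal{Bis}_\eta(X)$. The two Peiffer identities then require no separate verification, since they already hold in the ambient crossed module $(\textnormal{Aut}(X),\textnormal{Bis}(X),I,\alpha)$ and restrict automatically.

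The observation organizing the whole argument is that both $\mathrm{Iso}(X,\eta)$ and $\textnormal{Bis}_\eta(X)$ arise as preimages of the isometry group $\textnormal{Isom}(X_0,\eta)\leq \textnormal{Diff}(X_0)$ under natural \emph{base} homomorphisms. Indeed, $(\Phi,\phi)\mapsto \phi$ defines a group homomorphism $b\colon \textnormal{Aut}(X)\to \textnormal{Diff}(X_0)$, while $\sigma\mapsto \iota_\sigma$ defines a homomorphism $\beta\colon \textnormal{Bis}(X)\to \textnormal{Diff}(X_0)$; the latter because a short computation from the bisection product $\sigma\bullet\sigma'(x)=\sigma(\iota_{\sigma'}(x))*\sigma'(x)$ yields $\iota_{\sigma\bullet\sigma'}=\iota_\sigma\circ \iota_{\sigma'}$, and hence $\iota_{\sigma^{-1}}=\iota_\sigma^{-1}$. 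Since $\textnormal{Isom}(X_0,\eta)$ is a subgroup of $\textnormal{Diff}(X_0)$, and since $\mathrm{Iso}(X,\eta)=b^{-1}(\textnormal{Isom}(X_0,\eta))$ while $\textnormal{Bis}_\eta(X)=\beta^{-1}(\textnormal{Isom}(X_0,\eta))$, both are subgroups.

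For the restriction of $I$, I would use that $I_\sigma$ covers $\iota_\sigma$, as already noted after Equation \eqref{BisAut}; thus $b(I_\sigma)=\iota_\sigma=\beta(\sigma)$, so whenever $\iota_\sigma$ is an isometry the automorphism $I_\sigma$ lies in $\mathrm{Iso}(X,\eta)$. For the restriction of $\alpha$, the key is the identity $\iota_{\alpha_\Phi(\sigma)}=\phi\circ \iota_\sigma\circ \phi^{-1}$, which follows from $\alpha_\Phi(\sigma)=\Phi\circ \sigma\circ \phi^{-1}$ together with the covering relation $t_X\circ \Phi=\phi\circ t_X$ valid for any automorphism $\Phi$ over $\phi$; when both $\phi$ and $\iota_\sigma$ are isometries this conjugate is again an isometry, so $\alpha_\Phi(\sigma)\in \textnormal{Bis}_\eta(X)$. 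Conceptually, these two identities say exactly that $(b,\beta)$ is a morphism of crossed modules onto $(\textnormal{Diff}(X_0),\textnormal{Diff}(X_0),\textnormal{id},c)$, with $c$ the conjugation action, and the proposition amounts to saying that $(\mathrm{Iso}(X,\eta),\textnormal{Bis}_\eta(X))$ is the preimage of the sub-crossed module $(\textnormal{Isom}(X_0,\eta),\textnormal{Isom}(X_0,\eta),\textnormal{id},c)$.

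I expect no serious obstacle here: the only steps demanding genuine care are the two homomorphism identities $\iota_{\sigma\bullet\sigma'}=\iota_\sigma\circ \iota_{\sigma'}$ and $\iota_{\alpha_\Phi(\sigma)}=\phi\circ \iota_\sigma\circ \phi^{-1}$, each a brief calculation using composability of the arrows involved and the covering relation $t_X\circ \Phi=\phi\circ t_X$. Once these are established, closure under products, inverses, $I$ and $\alpha$ all reduce to the stability of $\textnormal{Isom}(X_0,\eta)$ under composition, inversion and conjugation inside $\textnormal{Diff}(X_0)$.
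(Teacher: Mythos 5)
Your proof is correct, and the two calculations that actually carry the argument are the same ones the paper relies on: that $I_\sigma$ covers $\iota_\sigma$ (giving $I(\textnormal{Bis}_\eta(X))\subseteq \mathrm{Iso}(X,\eta)$, which the paper declares clear), and the identity $\iota_{\alpha_\Phi(\sigma)}=\phi\circ\iota_\sigma\circ\phi^{-1}$. Where you differ is in derivation and packaging. The paper obtains the key identity indirectly, by invoking the Peiffer identity $I_{\alpha_\Phi(\sigma)}=\Phi\, I_\sigma\,\Phi^{-1}$ in the ambient crossed module and restricting to units, whereas you compute it directly from $\alpha_\Phi(\sigma)=\Phi\circ\sigma\circ\phi^{-1}$ and the covering relation $t_X\circ\Phi=\phi\circ t_X$; both are two-line computations and equally valid. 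More substantively, you organize the whole proposition around the pair of base homomorphisms $b\colon \textnormal{Aut}(X)\to \textnormal{Diff}(X_0)$ and $\beta\colon \textnormal{Bis}(X)\to \textnormal{Diff}(X_0)$, $\beta(\sigma)=\iota_\sigma$, exhibiting $(\mathrm{Iso}(X,\eta),\textnormal{Bis}_\eta(X))$ as the preimage of $(\textnormal{Isom}(X_0,\eta),\textnormal{Isom}(X_0,\eta),\textnormal{id},c)$ under a morphism of crossed modules. This buys you two things the paper leaves implicit: the subgroup closure of both $\mathrm{Iso}(X,\eta)$ and $\textnormal{Bis}_\eta(X)$ falls out at once (via $\iota_{\sigma\bullet\sigma'}=\iota_\sigma\circ\iota_{\sigma'}$, which you rightly flag as needing a check), and the argument generalizes verbatim to the pullback of any subgroup of $\textnormal{Diff}(X_0)$ stable under conjugation --- indeed the paper implicitly reuses the same pattern later for $\textnormal{Iso}_{\textnormal{w}}(X,\eta)$, where the transversal-isometry condition replaces $\textnormal{Isom}(X_0,\eta)$, so your formulation captures both propositions in one stroke. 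The paper's proof is shorter because it leans on the ambient structure; yours is more self-contained and structural. No gaps.
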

\begin{proof}
It is clear that $I(\textnormal{Bis}_{\eta}(X))\subseteq \mathrm{Iso}(X,\eta)$. As $I_{\alpha_{\Phi}(\sigma)}=\Phi I_{\sigma}\Phi^{-1}$ for all $\sigma \in \textnormal{Bis}_{\eta}(X)$ and $\Phi \in \textnormal{Iso}(X,\eta)$ then when restricting to unities we have that $\iota_{\alpha_{\Phi}(\sigma)}=\phi\circ \iota_{\sigma}\circ \phi^{-1}$, thus obtaining an isometry.
\end{proof}
The Lie 2-group associated to the crossed module $(\mathrm{Iso}(X,\eta),\textnormal{Bis}_{\eta}(X),I,\alpha)$ will be called \emph{Lie 2-group of strong isometries} of $(X_1\rightrightarrows X_0,\eta)$. 
\begin{example}
Let $(M,\eta)$ be an $n$-dimensional Riemannian manifold and let $\pi:O(M)\to M$ denote the corresponding $O(n,\mathbb{R})$-principal bundle of orthonormal frames. After fixing an $\textnormal{Ad}$-invariant inner product $\langle\cdot,\cdot \rangle$ on the Lie algebra $\mathfrak{o}(n,\mathbb{R})$ and taking the connection 1-form $\omega\in \Omega^1(O(M),\mathfrak{o}(n,\mathbb{R}))$ associated to the Levi--Civita connection on $(M,\eta)$ we can define a Riemannian metric on $O(M)$ as
$$\tilde{\eta}(X,Y)=\eta(d\pi(X),d\pi(Y))+ \langle\omega(X),\omega(Y) \rangle.$$

It follows that $\pi:O(M)\to M$ becomes a Riemannian submersion and its corresponding submersion groupoid $O(M)\times_{M}O(M)\rightrightarrows O(M)$ inherits an induced $0$-metric which we also denote by $\eta$ \cite{dHF,O'n}. Therefore,
$$\textnormal{Bis}_{\eta}(O(M)\times_{M}O(M))\simeq \textnormal{Gau}(O(M),\theta)\quad\textnormal{and}\quad  \textnormal{Iso}(O(M)\times_{M}O(M),\eta)\simeq \textnormal{Aut}(O(M),\theta),$$
where $\theta\in \Omega^{1}(O(M),\mathbb{R}^n)$ is the canonical $1$-form, $\textnormal{Aut}(O(M),\theta)$ is the group of bundle isomorphisms preserving $\theta$ and
$\textnormal{Gau}(O(M),\theta)$ is its normal subgroup of bundle isomorphisms covering the identity (compare Example \ref{ExampleOrbit2-group}). Hence, from \cite[p. 236]{KN} we get that the orbit space
$$\textnormal{Iso}(O(M)\times_{M}O(M),\eta)/\textnormal{Bis}_{\eta}(O(M)\times_{M}O(M))\simeq  \textnormal{Iso}(M,\eta).$$
\end{example}

Let us now consider the sets 
$$\Gamma_{\eta}(A_X)=\left\lbrace \alpha \in \Gamma(A_X)\,|\, \rho(\alpha) \in \mathfrak{o}(X_0,\eta)\right\rbrace\quad\textnormal{and}\quad \mathfrak{o}_{m}(X)=\left\lbrace (\xi,v)\in  \mathfrak{X}_{m}(X)\,|\, v \in \mathfrak{o}(X_0,\eta)\right\rbrace$$
where $\mathfrak{o}(X_0,\eta)$ denotes the Lie algebra of Killing vector fields of $(X_0,\eta)$. In these terms we may describe the infinitesimal version of the Lie $2$-group of strong isometries as follows. 
\begin{proposition}\label{Strongkilling}
The quadruple $(\mathfrak{o}_{m}(X),\Gamma_{\eta}(A_X),\delta,D)$ defines a sub-crossed module structure of $(\mathfrak{X}_{m}(X),\Gamma(A_{X}),\delta,D)$.
\end{proposition}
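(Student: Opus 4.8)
The plan is to verify directly the four conditions that make $(\mathfrak{o}_{m}(X),\Gamma_{\eta}(A_X),\delta,D)$ a sub-crossed module of $(\mathfrak{X}_{m}(X),\Gamma(A_{X}),\delta,D)$: that $\mathfrak{o}_{m}(X)$ is a Lie subalgebra of $\mathfrak{X}_{m}(X)$, that $\Gamma_{\eta}(A_X)$ is a Lie subalgebra of $\Gamma(A_X)$, that $\delta$ maps $\Gamma_{\eta}(A_X)$ into $\mathfrak{o}_{m}(X)$, and that $D$ restricts to an action of $\mathfrak{o}_{m}(X)$ on $\Gamma_{\eta}(A_X)$. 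Since the ambient quadruple is already a crossed module, the Peiffer-type identities \eqref{PeifferLieAlgebra} hold on the whole space and therefore are inherited verbatim by any pair of subspaces closed under the four operations; so no identity needs to be re-checked, and all the work reduces to the four closure statements above. The single external input I will use throughout is that $\mathfrak{o}(X_0,\eta)$ is a Lie subalgebra of $\mathfrak{X}(X_0)$ (the bracket of two Killing fields is Killing) together with the fact that the anchor $\rho\colon\Gamma(A_X)\to\mathfrak{X}(X_0)$ is a morphism of Lie algebras.

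The two subalgebra conditions are then immediate. For $\mathfrak{o}_{m}(X)$, given $(\xi_1,v_1),(\xi_2,v_2)\in\mathfrak{o}_{m}(X)$ their bracket in $\mathfrak{X}_{m}(X)$ is $([\xi_1,\xi_2],[v_1,v_2])$, and since $v_1,v_2\in\mathfrak{o}(X_0,\eta)$ we get $[v_1,v_2]\in\mathfrak{o}(X_0,\eta)$, so the bracket lands in $\mathfrak{o}_{m}(X)$. For $\Gamma_{\eta}(A_X)$, given $\alpha_1,\alpha_2\in\Gamma_{\eta}(A_X)$ one has $\rho([\alpha_1,\alpha_2]_{A_X})=[\rho(\alpha_1),\rho(\alpha_2)]\in\mathfrak{o}(X_0,\eta)$, whence $[\alpha_1,\alpha_2]_{A_X}\in\Gamma_{\eta}(A_X)$.

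The compatibility of $\delta$ and $D$ with the Killing condition is where the only genuine computation lies, and both reductions rest on $t$-relatedness. For $\delta$, I would recall that the right-invariant extension $\alpha^r$ is $t$-related to $\rho(\alpha)$ while $\alpha^l$ is $t$-vertical, so that the base vector field of the multiplicative field $\delta(\alpha)=\alpha^r-\alpha^l$ is exactly $\rho(\alpha)$; hence $\alpha\in\Gamma_{\eta}(A_X)$ forces $\rho(\alpha)\in\mathfrak{o}(X_0,\eta)$ and thus $\delta(\alpha)\in\mathfrak{o}_{m}(X)$. For $D$, take $(\xi,v)\in\mathfrak{o}_{m}(X)$ and $\alpha\in\Gamma_{\eta}(A_X)$. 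Multiplicativity of $\xi$ gives that $\xi$ is $t$-related to $v$, and $\alpha^r$ is $t$-related to $\rho(\alpha)$; since the bracket of $t$-related fields is $t$-related, $[\xi,\alpha^r]$ is $t$-related to $[v,\rho(\alpha)]$, and evaluating at units (where $t(1_x)=x$) yields
$$
\rho\bigl(D_{(\xi,v)}\alpha\bigr)=dt\bigl([\xi,\alpha^r]|_{X_0}\bigr)=[v,\rho(\alpha)].
$$
As $v$ and $\rho(\alpha)$ both lie in $\mathfrak{o}(X_0,\eta)$, so does their bracket, and therefore $D_{(\xi,v)}\alpha\in\Gamma_{\eta}(A_X)$. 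I expect the $t$-relatedness identification of the base field of $\delta(\alpha)$ and of $\rho\circ D$ to be the main (though still routine) point; everything else is a direct appeal to the Lie algebra structure of $\mathfrak{o}(X_0,\eta)$ and to $\rho$ being a Lie algebra morphism.
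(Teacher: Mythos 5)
Your proposal is correct and takes essentially the same approach as the paper: both proofs reduce everything to the closure of $\mathfrak{o}_{m}(X)$ and $\Gamma_{\eta}(A_X)$ under brackets, $\delta$, and $D$, with the crux being the identity $\rho(D_{(\xi,v)}\alpha)=[v,\rho(\alpha)]\in\mathfrak{o}(X_0,\eta)$. The only cosmetic difference is in how that identity is obtained --- the paper invokes the already-established equivariance relation $\delta(D_{\xi}\alpha)=[\xi,\delta(\alpha)]$ and restricts to $X_0$, while you derive it directly from $t$-relatedness of brackets of related vector fields --- but these are equivalent routine computations.
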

\begin{proof}
Firstly, note that $\mathfrak{o}_{m}(X)$ is a Lie subalgebra of $\mathfrak{X}_{m}(X)$ since $\mathfrak{o}(X_0,\eta)$ is a Lie algebra. Secondly, if $\alpha, \beta \in \Gamma_{\eta}(A_X)$ then $\rho([\alpha,\beta])=[\rho(\alpha),\rho(\beta)]\in \mathfrak{o}(X_0,\eta)$ so that $[\alpha,\beta]\in \Gamma_{\eta}(A)$. If $(\xi,v)\in \mathfrak{o}_{m}(X)$ and $\alpha \in \Gamma_{\eta}(A)$ then we have by definition that $D_{\xi}(\alpha)=\left.[\xi,\alpha^r]\right|_{X_0} \in \Gamma(A_X)$. However, the equivariance identity implies that $\delta(D_{\xi}(\alpha))=[\xi,\delta(\alpha)]$. Therefore, it holds that $\left.\delta(D_{\xi}\alpha)\right|_{X_0}=\left.[\xi,\delta(\alpha)]\right|_{X_0}$ which is the same thing that saying
$\rho(D_{\xi}\alpha)=[v,\rho(\alpha)] \in \mathfrak{o}(X_0,\eta)$ since $v$ is also a Killing vector field.
\end{proof}
The Lie $2$-algebra associated to the crossed module $(\mathfrak{o}_{m}(X),\Gamma_{\eta}(A_X),\delta,D)$ will be called \emph{Lie 2-algebra of strong multiplicative Killing vector fields} of $(X_1\rightrightarrows X_0,\eta)$.

Summing up, as consequence of Lemma \ref{Rmk1} we can now provide an infinitesimal description of an isometric Lie $2$-group action. Indeed:

\begin{corollary}\label{InfinitesimalIsometriesRep}
Let $\theta$ be an isometric right 2-action of a Lie $2$-group $G_1\rightrightarrows G_0$ on a Riemannian groupoid $(X_1\rightrightarrows X_0,\eta)$. Denote by $(G,H,\rho,\alpha)$ to the crossed module of Lie groups associated to $G_1\rightrightarrows G_0$ and by $(\mathfrak{g},\mathfrak{h},\partial,\mathcal{L})$ to its corresponding crossed module of Lie algebras. Then:
\begin{itemize}
\item there is a morphism of crossed modules of Lie groups $$(\sigma,\Sigma):(G,H,\rho,\alpha) \to (\textnormal{Iso}(X,\eta),\textnormal{Bis}_\eta(X),I,\alpha),$$that is defined as in Lemma \ref{LemmaIso1}, and
\item there is a morphism of crossed modules of Lie algebras $$(j_{-1},j_0):(\mathfrak{g},\mathfrak{h},\partial,\mathcal{L})\to (\mathfrak{o}_{m}(X),\Gamma_{\eta}(A_X),\delta,D),$$
which is defined in Theorem \ref{InfinitesimalAction}.
\end{itemize}
\end{corollary}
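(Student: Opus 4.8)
The plan is to recognize this statement as a corestriction of two morphisms that are already in hand: Lemma \ref{LemmaIso1} supplies the morphism $(\sigma,\Sigma)$ into the full crossed module $(\textnormal{Aut}(X),\textnormal{Bis}(X),I,\alpha)$, and Theorem \ref{InfinitesimalAction} supplies $(j_{-1},j_0)$ into $(\mathfrak{X}_m(X),\Gamma(A_X),\delta,D)$, while Propositions \ref{IsoStrong1} and \ref{Strongkilling} exhibit the isometry/Killing quadruples as \emph{sub}-crossed modules of these. Since the structure maps of a sub-crossed module are restrictions of the ambient ones, a morphism of crossed modules automatically corestricts to any sub-crossed module containing its image, and all the homomorphism and compatibility relations are inherited. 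Thus the entire task reduces to checking that the images of $(\sigma,\Sigma)$ and $(j_{-1},j_0)$ land, respectively, in $(\textnormal{Iso}(X,\eta),\textnormal{Bis}_\eta(X),I,\alpha)$ and $(\mathfrak{o}_m(X),\Gamma_\eta(A_X),\delta,D)$. The one new ingredient is the isometric hypothesis, which through Lemma \ref{Rmk1} guarantees that $G_0$ acts on $(X_0,\eta^{(0)})$ by isometries.

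For the group-level morphism I would verify the two membership conditions directly. Given $g\in G=G_0$, the automorphism $\Sigma_g(p)=p1_g$ covers the object map $x\mapsto xg=\theta^0_g(x)$, which preserves $\eta^{(0)}$ by Lemma \ref{Rmk1}, so $\Sigma_g\in\textnormal{Iso}(X,\eta)$. Given $h\in H=\textnormal{ker}(s_G)$, the bisection $\sigma_h(x)=1_xh$ has induced base diffeomorphism $\iota_{\sigma_h}(x)=t_X(1_xh)=x\rho(h)=\theta^0_{\rho(h)}(x)$, again an isometry of $(X_0,\eta^{(0)})$, whence $\sigma_h\in\textnormal{Bis}_\eta(X)$. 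The identities $\Sigma\circ\rho=I\circ\sigma$ and $\sigma_{\alpha_g(h)}=\alpha_{\Sigma_g}(\sigma_h)$ are already established in Lemma \ref{LemmaIso1}, and they persist in the sub-crossed module of Proposition \ref{IsoStrong1}.

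For the algebra-level morphism I would check the analogous two conditions. For $\zeta\in\mathfrak{g}=\mathfrak{g}_0$, the base component of $j_0(\zeta)=(\tilde{1_\zeta},\tilde\zeta)$ is the fundamental vector field $\tilde\zeta$ of the isometric action $\theta^0$, hence a Killing field on $(X_0,\eta^{(0)})$, so $j_0(\zeta)\in\mathfrak{o}_m(X)$. For $\xi\in\mathfrak{h}$, applying the anchor $\rho=dt_X|_{A_X}$ to $j_{-1}(\xi)=\left.\tilde\xi\right|_{X_0}$ and using target-equivariance $t_X(1_x\exp(t\xi))=x\exp(t\partial\xi)$ identifies $\rho(j_{-1}(\xi))$ with the fundamental field $\widetilde{\partial\xi}$ of $\partial\xi\in\mathfrak{g}_0$, which is again Killing; therefore $j_{-1}(\xi)\in\Gamma_\eta(A_X)$. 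The relations $\delta(j_{-1}(\xi))=j_0(\partial\xi)$ and $j_{-1}(\mathcal{L}_\zeta\xi)=D_{j_0(\zeta)}(j_{-1}(\xi))$ hold in the ambient crossed module by Theorem \ref{InfinitesimalAction}, so the corestricted pair is a genuine morphism into $(\mathfrak{o}_m(X),\Gamma_\eta(A_X),\delta,D)$.

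Once Lemma \ref{Rmk1} is invoked the argument is essentially bookkeeping, and I expect the only step that is not purely formal to be the identification $\rho(\left.\tilde\xi\right|_{X_0})=\widetilde{\partial\xi}$, namely the recognition that the anchor of the fundamental section attached to $\xi\in\mathfrak{h}$ is the infinitesimal generator of the $G_0$-action in the direction $\partial\xi$. This is exactly where $\rho=t_G$ together with $\partial=\left.dt_G\right|_{\mathfrak{h}}$ enters, and it is the place where one must keep track of the left-versus-right convention so that the fundamental fields genuinely come from the isometric action $\theta^0$.
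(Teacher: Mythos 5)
Your proposal is correct and takes essentially the same route as the paper: the paper offers no separate proof, presenting the statement as an immediate consequence of Lemma \ref{Rmk1} combined with Lemma \ref{LemmaIso1}, Theorem \ref{InfinitesimalAction}, and the sub-crossed module structures of Propositions \ref{IsoStrong1} and \ref{Strongkilling}, which is exactly your corestriction argument. Your explicit membership checks, in particular the identification $\rho(\left.\tilde{\xi}\right|_{X_0})=\widetilde{\partial\xi}$ (which also follows from the relation $\delta(j_{-1}(\xi))=j_{0}(\partial\xi)$ already proved in Theorem \ref{InfinitesimalAction}), simply spell out what the paper leaves implicit.
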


\subsection{Transversal isometries}\label{S:4:1}

 Recall that a $0$-metric on a Lie groupoid $X_1\rightrightarrows X_0$ is a Riemannian metric $\eta$ on $X_0$ which is transversely invariant by the canonical left action of $X_1\rightrightarrows X_0$ on $X_0$, compare \cite{dHF,PPT}. Note that this is the same that requiting that $\eta$ is transversely invariant by the action of the group of bisections $\textnormal{Bis}(X)\times X_0\to X_0$ which is defined by $\sigma\cdot x:=\iota_{\sigma}(x)$. It is clear that this action preserves the orbits so that it induces a well defined action on the normal space of an orbit. In consequence, $\eta$ is a 0-metric if and only if for all $\sigma \in \textnormal{Bis}(X)$ the map $\overline{d\iota_{\sigma}}:(\nu_x(\mathcal{O}),\overline{\eta})\to (\nu_{\iota_{\sigma}(x)}(\mathcal{O}),\overline{\eta})$ is a linear isometry since we may identify $\nu(\mathcal{O})\cong T\mathcal{O}^\perp$. Therefore, motivated by this fact and what we did in the previous section we now plan to weaken the condition for a diffeomorphism to be an isometry by imposing instead a transversal isometric condition along groupoid orbits. This will lead us to define a Lie $2$-algebra of transverse infinitesimal isometries with respect to any Riemannian groupoid $n$-metric, which at the end turns out to be Morita invariant.

Let $\Phi:X_1\to X_1$ be a Lie groupoid automorphism covering $\phi:X_0\to X_0$. From now on we assume the identification $\nu(\mathcal{O})\cong T\mathcal{O}^\perp$ for each groupoid orbit $\mathcal{O}$ in $(X_0,\eta^{(0)})$ without stating it explicitly unless it is necessary. The diffeomorphism $\phi:X_0\to X_0$ is said to be a \emph{transversal isometry} of $(X_0,\eta)$ if $\overline{d\phi}:\nu(\mathcal{O}_x)\to \nu(\mathcal{O}_{\phi(x)})$ is a fiberwise isometry for every groupoid orbit $\mathcal{O}_x$ in $X_0$. Consider the set
$$\textnormal{Iso}_{\textnormal{w}}(X,\eta)=\left \lbrace (\Phi,\phi)\in \mathrm{Aut}(X)\,|\, \phi\ \textnormal{transversal isometry of}\ (X_0,\eta)\right\rbrace.$$
Note that for every $\sigma\in \textnormal{Bis}(X)$ it follows that $\iota_\sigma$ is a transversal isometry of $(X_0,\eta)$. Thus, by arguing as in Proposition \ref{IsoStrong1} we easily get that:
\begin{lemma}
The quadruple $(\mathrm{Iso}_{\textnormal{w}}(X,\eta),\textnormal{Bis}(X),I,\alpha)$ determines a sub-crossed module structure  of $(\textnormal{Aut}(X),\textnormal{Bis}(X),I,\alpha)$.
\end{lemma}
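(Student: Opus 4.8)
The plan is to mirror the proof of Proposition \ref{IsoStrong1}, noting that the argument is in fact lighter here because the bisection leg $\textnormal{Bis}(X)$ of our candidate sub-crossed module coincides with the full bisection group of the ambient crossed module $(\textnormal{Aut}(X),\textnormal{Bis}(X),I,\alpha)$. Concretely, to exhibit $(\textnormal{Iso}_{\textnormal{w}}(X,\eta),\textnormal{Bis}(X),I,\alpha)$ as a sub-crossed module there are three points to verify: (i) $\textnormal{Iso}_{\textnormal{w}}(X,\eta)$ is a subgroup of $\textnormal{Aut}(X)$; (ii) $I(\textnormal{Bis}(X))\subseteq \textnormal{Iso}_{\textnormal{w}}(X,\eta)$, so that the restriction of $I$ has the right target; and (iii) the equivariance and Peiffer identities hold for the restricted data. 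Since $I$ and $\alpha$ are literally the structure maps of the ambient crossed module and $\textnormal{Bis}(X)$ is unchanged, point (iii) will be automatic, so the real work is (i) and (ii).

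First I would settle (i). The key preliminary observation is that any $(\Phi,\phi)\in\textnormal{Aut}(X)$ permutes groupoid orbits: the automorphism relations $s_X\circ\Phi^1=\phi\circ s_X$ and $t_X\circ\Phi^1=\phi\circ t_X$ force $\phi(\mathcal{O}_x)=\mathcal{O}_{\phi(x)}$, hence $d\phi_x$ carries $T_x\mathcal{O}_x$ isomorphically onto $T_{\phi(x)}\mathcal{O}_{\phi(x)}$ and descends, under the identification $\nu(\mathcal{O})\cong T\mathcal{O}^\perp$, to a well-defined fiberwise linear map $\overline{d\phi}\colon\nu(\mathcal{O}_x)\to\nu(\mathcal{O}_{\phi(x)})$. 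Functoriality of this normal derivative gives $\overline{d(\phi_1\circ\phi_2)}=\overline{d\phi_1}\circ\overline{d\phi_2}$ and $\overline{d(\phi^{-1})}=(\overline{d\phi})^{-1}$; consequently a composite of transversal isometries is a transversal isometry, the inverse of one is one, and the identity is trivially one. Thus $\textnormal{Iso}_{\textnormal{w}}(X,\eta)$ is closed under composition and inversion, i.e.\ a subgroup. For (ii) I would invoke the defining property of a $0$-metric recalled at the start of this subsection: transverse invariance of $\eta$ under the $\textnormal{Bis}(X)$-action $\sigma\cdot x=\iota_\sigma(x)$ says exactly that $\overline{d\iota_\sigma}\colon\nu_x(\mathcal{O})\to\nu_{\iota_\sigma(x)}(\mathcal{O})$ is a linear isometry for every $\sigma$ and every orbit. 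Since $I_\sigma$ covers $\iota_\sigma$, this means $\iota_\sigma$ is a transversal isometry and therefore $I_\sigma\in\textnormal{Iso}_{\textnormal{w}}(X,\eta)$, which is (ii).

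Finally, for (iii), because $\textnormal{Iso}_{\textnormal{w}}(X,\eta)\leq\textnormal{Aut}(X)$ acts on $\textnormal{Bis}(X)$ simply by the restriction of $\alpha$ — and this action manifestly lands back in $\textnormal{Bis}(X)$, as $\textnormal{Bis}(X)$ is the entire bisection group — the equivariance identity $I_{\alpha_\Phi(\sigma)}=\Phi\,I_\sigma\,\Phi^{-1}$ and the Peiffer identity are inherited verbatim from $(\textnormal{Aut}(X),\textnormal{Bis}(X),I,\alpha)$. I expect the only mildly delicate step to be the naturality of the normal derivative $\overline{d\phi}$ used in (i), namely checking that the base map of an automorphism genuinely induces a well-defined map on normal spaces that is compatible with composition; once that is in place, both the subgroup property and the inclusion $I(\textnormal{Bis}(X))\subseteq\textnormal{Iso}_{\textnormal{w}}(X,\eta)$ are routine, and the conclusion follows exactly as in Proposition \ref{IsoStrong1}.
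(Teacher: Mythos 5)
Your proposal is correct and follows essentially the same route as the paper: the crucial step, that $\iota_\sigma$ is a transversal isometry for every $\sigma\in\textnormal{Bis}(X)$, is exactly the reformulation of the $0$-metric condition the paper invokes before applying the argument of Proposition \ref{IsoStrong1} (which here is even lighter, since the bisection leg is the full group, so closure of $\alpha$ is automatic). The only difference is that you make explicit the subgroup property of $\mathrm{Iso}_{\textnormal{w}}(X,\eta)$ via functoriality of the normal derivative, a point the paper leaves implicit.
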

The Lie $2$-group determined by the crossed module $(\mathrm{Iso}_{\textnormal{w}}(X,\eta),\textnormal{Bis}(X),I,\alpha)$ is called \emph{Lie 2-group of weak isometries} of $(X_1\rightrightarrows X_0,\eta)$. Let us denote by $\mathfrak{o}^{\textnormal{w}}(X_0,\eta)$ the set of vector fields on $X_0$ which are covered by multiplicative vector fields on $X_1$ whose flow determines a (local) transversal isometry of $(X_0,\eta)$ and consider the set $\mathfrak{o}_{m}^{\textnormal{w}}(X)=\left\lbrace (\xi,v)\in  \mathfrak{X}_{m}(X)\,|\, v \in \mathfrak{o}^{\textnormal{w}}(X_0,\eta)\right\rbrace$. Observe that if $v_1,v_2\in \mathfrak{o}^{\textnormal{w}}(X_0,\eta)$ then for $t$ small enough we have that the commutator flow $\varphi_{-\sqrt{t}}^{v_2}\varphi_{-\sqrt{t}}^{v_1}\varphi_{\sqrt{t}}^{v_2}\varphi_{\sqrt{t}}^{v_1}$ is a transversal isometry so that $\mathfrak{o}^{\textnormal{w}}(X_0,\eta)$ is a Lie subalgebra of $\mathfrak{X}(X_0)$. From \cite[Thm. D]{SW} we know that $\textnormal{Lie}(\textnormal{Bis}(X))$ is identified with $\Gamma(A_X)$. Therefore, by using similar arguments as those in Proposition \ref{Strongkilling} we obtain a description of the Lie $2$-group of weak isometries of $(X_1\rightrightarrows X_0,\eta)$. Namely:
\begin{proposition}
The quadruple 
$(\mathfrak{o}_{m}^{\textnormal{w}}(X),\Gamma(A_X),\delta,D)$ defines a sub-crossed module structure of $(\mathfrak{X}_{m}(X),\Gamma(A_{X}),\delta,D)$.
\end{proposition}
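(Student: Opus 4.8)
The plan is to verify directly the axioms defining a sub-crossed module of $(\mathfrak{X}_{m}(X),\Gamma(A_{X}),\delta,D)$. Since the second term of the candidate quadruple is the \emph{whole} space $\Gamma(A_X)$, these axioms collapse to three checks: that $\mathfrak{o}_{m}^{\textnormal{w}}(X)$ is a Lie subalgebra of $\mathfrak{X}_{m}(X)$; that $\delta$ maps $\Gamma(A_X)$ into $\mathfrak{o}_{m}^{\textnormal{w}}(X)$; and that the action $D$ sends $\mathfrak{o}_{m}^{\textnormal{w}}(X)\times \Gamma(A_X)$ back into $\Gamma(A_X)$. The last of these is automatic, since $D$ is inherited from the ambient crossed module and already takes values in $\Gamma(A_X)$; the Peiffer identities and equivariance are likewise inherited. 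So the genuine content lies in the first two points.

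For the first point, I would use that the base projection $\mathfrak{X}_{m}(X)\to \mathfrak{X}(X_0)$, $(\xi,v)\mapsto v$, is a homomorphism of Lie algebras, because the bracket of multiplicative vector fields is computed componentwise, $[(\xi_1,v_1),(\xi_2,v_2)]=([\xi_1,\xi_2],[v_1,v_2])$, as in \cite{OW}. By definition $\mathfrak{o}_{m}^{\textnormal{w}}(X)$ is exactly the preimage of $\mathfrak{o}^{\textnormal{w}}(X_0,\eta)$ under this homomorphism, and $\mathfrak{o}^{\textnormal{w}}(X_0,\eta)$ has already been shown to be a Lie subalgebra of $\mathfrak{X}(X_0)$ via the commutator-flow argument above. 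Since the preimage of a subalgebra under a Lie algebra homomorphism is again a subalgebra, $\mathfrak{o}_{m}^{\textnormal{w}}(X)$ is a Lie subalgebra of $\mathfrak{X}_{m}(X)$.

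The main step is the second point. Fix $\alpha\in \Gamma(A_X)$; I must show that the base vector field covered by the multiplicative field $\delta(\alpha)=\alpha^r-\alpha^l$ lies in $\mathfrak{o}^{\textnormal{w}}(X_0,\eta)$. Because $\delta$ is the infinitesimal counterpart of the inner-automorphism map $I:\textnormal{Bis}(X)\to\textnormal{Aut}(X)$ under the identification $\textnormal{Lie}(\textnormal{Bis}(X))\cong\Gamma(A_X)$ of \cite[Thm. D]{SW}, the flow of $\delta(\alpha)$ on $X_1$ is the one-parameter family of inner automorphisms $I_{\sigma_t}$, where $\sigma_t$ is the bisection exponential of $\alpha$. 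Consequently the flow of the base vector field of $\delta(\alpha)$ on $X_0$ is precisely the induced family $\iota_{\sigma_t}$; differentiating $\iota_{\sigma_t}(x)=t_X(\sigma_t(x))$ at $t=0$ also identifies this base field with the anchor $\rho(\alpha)$. Since $\eta$ is a $0$-metric, each $\iota_{\sigma_t}$ is a transversal isometry of $(X_0,\eta)$, as recorded just before the statement. Hence $\rho(\alpha)$ is covered by the multiplicative vector field $\delta(\alpha)$ and has (local) transversal-isometry flow, i.e.\ $\rho(\alpha)\in\mathfrak{o}^{\textnormal{w}}(X_0,\eta)$, so that $\delta(\alpha)\in\mathfrak{o}_{m}^{\textnormal{w}}(X)$, as required.

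The crux — and the only place where geometry enters — is this identification of the flow of $\delta(\alpha)$ with the bisection-induced family $\iota_{\sigma_t}$, after which the $0$-metric hypothesis does all the work: transversal invariance under $\textnormal{Bis}(X)$ is exactly the statement that inner symmetries act by transversal isometries on $X_0$. I expect the remaining subtlety to be purely bookkeeping, namely keeping the sign convention of $\alpha^r-\alpha^l$ consistent with the exponential $\sigma_t$, rather than anything substantive, since the orbit-tangency of $\rho(\alpha)$ already renders the normal part of the flow trivial and hence automatically isometric.
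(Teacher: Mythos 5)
Your proposal is correct and follows essentially the same route as the paper: the paper's (pointer-style) proof consists of exactly the ingredients you assemble --- each $\iota_\sigma$ is a transversal isometry because $\eta$ is a $0$-metric, the identification $\textnormal{Lie}(\textnormal{Bis}(X))\cong\Gamma(A_X)$ realizes the flow of $\delta(\alpha)$ as the inner automorphisms $I_{\sigma_t}$ covering $\iota_{\sigma_t}$, the commutator-flow argument makes $\mathfrak{o}^{\textnormal{w}}(X_0,\eta)$ (hence its preimage $\mathfrak{o}_{m}^{\textnormal{w}}(X)$ under the componentwise-bracket projection) a Lie subalgebra, and the rest of the crossed-module data is inherited since the second term is all of $\Gamma(A_X)$. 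Only your closing aside is loosely phrased: the flow of $\rho(\alpha)$ does not act trivially on the normal spaces $\nu(\mathcal{O})$ but rather by the normal representation of the arrows $\sigma_t(x)$, and it is isometric not because it is trivial but because a $0$-metric is by definition invariant under that normal action.
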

Accordingly, the Lie $2$-algebra associated to the crossed module $(\mathfrak{o}_{m}^{\textnormal{w}}(X),\Gamma(A_X),\delta,D)$ will be called \emph{Lie 2-algebra of weak multiplicative Killing vector fields} of $(X_1\rightrightarrows X_0,\eta)$.

\begin{remark}\label{WeakKillingNerve}
	Let us suppose that we are equipped with an $n$-metric $\eta^{(n)}$ on $X_n$ and that $(\xi,v)$ is a multiplicative vector field on $X_1\rightrightarrows X_0$ with $v \in \mathfrak{o}^{\textnormal{w}}(X_0,\eta^{(0)})$. It is simple to see that the fact that $s_X:X_1\to X_0$ (or $t_X:X_1\to X_0$) is a Riemannian submersion clearly implies that $\xi \in \mathfrak{o}^{\textnormal{w}}(X_1,\eta^{(1)})$. More importantly, if $\xi_n$ denotes the vector field on $X_n$ induced by $(\xi,v)$ for all $n\geq 2$ then if follows that $\xi_n \in \mathfrak{o}^{\textnormal{w}}(X_n,\eta^{(n)})$ since the face maps $X_n\to X_{n-1}$ are Riemannian submersions. As a consequence of this we have that the notion of weak multiplicative Killing vector field can be extended to a notion associated to any Riemannian $n$-metric. This immediately implies that the Lie 2-algebra of weak multiplicative Killing vector fields $(\mathfrak{o}_{m}^{\textnormal{w}}(X),\Gamma(A_X),\delta,D)$ can be thought of as an algebraic object associated to any Riemannian $n$-metric on $X_n$.
\end{remark}

\subsection{Morita invariance of transversal Killing vector fields}\label{S:4:2}

In this subsection we apply some of the results from \cite{OW} to our context in order to define a notion of geometric Killing vector field on a quotient Riemannian stack. Then we use some of the results from \cite{CMS} to describe several interesting features that these kinds of vector fields have.

Let us start by introducing some necessary terminology.  A \emph{Morita map} is a groupoid morphism $\phi:(X_1\rightrightarrows X_0) \to (Y_1\rightrightarrows Y_0)$ which is \emph{fully faithful} and \emph{essentially surjective}, in the sense that the source/target maps define a fibred product of manifolds $X_1 \cong (X_0 \times X_0) \times_{ (Y_0\times Y_0)} Y_1$ and that the map $Y_1 \times _{Y_0}X_0\to X_0$ sending $(\phi^0(x)\to y)\mapsto y$ is a surjective submersion \cite{dH,MM}. An important fact shown in \cite[Thm. 4.3.1]{dH} is that a Lie groupoid morphism is a Morita map if and only if it yields an isomorphism between \emph{transversal data}. That is, the morphism must induce: a homeomorphism between the orbit spaces, a Lie group isomorphism $X_x\cong Y_{\phi^0(x)}$ between the isotropies and isomorphisms between the normal representations $X_x\curvearrowright \nu_x\to Y_{\phi^0(x)}\curvearrowright \nu'_{\phi^0(x)}$. We think of a quotient stack as a Lie groupoid up to Morita equivalence in the sense that two Lie groupoids $X$ and $Y$ define the same stack if there is a third groupoid $Z$ and Morita maps $Z\to X$ and $Z\to Y$ \cite{dH}. These two Morita maps may be assumed to be Morita fibrations (surjective submersion at the level of objects) \cite[p. 131]{MM}. The quotient stack associated to the Lie groupoid $X_1\rightrightarrows X_0$ will be denoted by $[X_0/X_1]$. 

Two Riemannian metrics $\eta_1$ and $\eta_2$ on $X_1\rightrightarrows X_0$ are said to be \emph{equivalent} if they induce the same inner product on the normal vector spaces over the groupoid orbits \cite{dHF2}. More generally, we define a \emph{Riemannian Morita map} (\emph{fibration}) $\phi: (Z_1 \rightrightarrows Z_0) \to (X_1\rightrightarrows X_0)$ as a Morita map between Riemannian groupoids that induces isometries on the normal vector spaces to the groupoid orbits $\nu_z(\mathcal{O}^Z)\to \nu_{\phi(z)}(\mathcal{O}^X)$ (Riemannian submersion at the level of objects). By using this terminology we have that  $\eta_1$ and $\eta_2$ are equivalent if and only if the identity $\textnormal{id}: (X_1 \rightrightarrows X_0,\eta_1) \to (X_1\rightrightarrows X_0,\eta_2)$ is a Riemannian Morita map.

Following \cite[s. 6]{OW}, given a Morita fibration $\phi: (Z_1 \rightrightarrows Z_0) \to (X_1\rightrightarrows X_0)$ we denote the set of projectable sections by
$$\Gamma(A_Z)^\phi=\lbrace \alpha \in \Gamma(A_Z):\textnormal{there exists}\ \alpha'\in \Gamma(A_X)\ \textnormal{such that}\ \phi_\ast\alpha=\alpha'\phi\rbrace.$$

If $\alpha \in \Gamma(A_Z)$ then the surjectivity of $\phi$ at the level of objects implies that there exists at most one section $\alpha'\in \Gamma(A_X)$ such that  $\phi_\ast\alpha=\alpha'\phi$, so that it follows that there is a natural linear map $\phi_\ast : \Gamma(A_Z)^\phi\to \Gamma(A_X)$. We denote by $\Gamma(A_Z)^\phi \hookrightarrow  \Gamma(A_Z)$ the inclusion map. It is clear that we can similarly define the set of projectable multiplicative sections $\mathfrak{X}_m(Z)^\phi$, a natural map $\phi_\ast: \mathfrak{X}_m(Z)^\phi\to \mathfrak{X}_m(X)$ and an inclusion $\mathfrak{X}_m(Z)^\phi\hookrightarrow \mathfrak{X}_m(Z)$. From \cite[Prop. 7.4]{OW} it follows that $(\mathfrak{X}_m(Z)^\phi,\Gamma(A_{Z})^\phi,\delta,D)$ is a sub-crossed module of $(\mathfrak{X}_m(Z),\Gamma(A_{Z}),\delta,D)$ and both maps $\phi_\ast: (\mathfrak{X}_m(Z)^\phi,\Gamma(A_{Z})^\phi,\delta,D)\to (\mathfrak{X}_m(X),\Gamma(A_{X}),\delta,D)$ and $(\mathfrak{X}_m(Z)^\phi,\Gamma(A_{Z})^\phi,\delta,D)\hookrightarrow(\mathfrak{X}_m(Z),\Gamma(A_{Z}),\delta,D)$  are morphisms of crossed-modules. More importantly,
$$(\mathfrak{X}_m(Z),\Gamma(A_{Z}),\delta,D) \hookleftarrow (\mathfrak{X}_m(Z)^\phi,\Gamma(A_{Z})^\phi,\delta,D) \xrightarrow[]{\it \phi_\ast} (\mathfrak{X}_m(X),\Gamma(A_{X}),\delta,D),$$
are quasi-isomorphisms of crossed modules. For specific details visit \cite{OW}.
\begin{lemma}\label{MoritaLemma1}
Let $\phi: (Z_1 \rightrightarrows Z_0,\eta^Z) \to (X_1\rightrightarrows X_0,\eta^X)$ be a Morita Riemannian fibration. Then
\begin{itemize}
\item $(\mathfrak{o}_m(Z)^\phi,\Gamma_{\eta}(A_{Z})^\phi,\delta,D)$ is a sub-crossed module of $(\mathfrak{o}_m(Z),\Gamma_\eta(A_{Z}),\delta,D)$,
\item the inclusion $(\mathfrak{o}_m(Z)^\phi,\Gamma_{\eta}(A_{Z})^\phi,\delta,D)\hookrightarrow(\mathfrak{o}_m(Z),\Gamma_{\eta}(A_{Z}),\delta,D)$ is a morphism of crossed modules, and
\item the projection $\phi_\ast: (\mathfrak{o}_m(Z)^\phi,\Gamma_\eta(A_{Z})^\phi,\delta,D)\to (\mathfrak{o}_m(X),\Gamma_\eta(A_{X}),\delta,D)$ is a morphism of crossed modules.
\end{itemize}
Moreover,
$$(\mathfrak{o}_m(Z),\Gamma_\eta(A_{Z}),\delta,D) \hookleftarrow (\mathfrak{o}_m(Z)^\phi,\Gamma_\eta(A_{Z})^\phi,\delta,D) \xrightarrow[]{\it \phi_\ast} (\mathfrak{0}_m(X),\Gamma_\eta(A_{X}),\delta,D),$$
are quasi-isomorphisms of crossed modules. Same conclusion holds true for the weak counterpart.
\end{lemma}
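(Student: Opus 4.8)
The plan is to bootstrap everything from the non-Riemannian statements already available for the full crossed modules, namely that
$$(\mathfrak{X}_m(Z),\Gamma(A_{Z}),\delta,D) \hookleftarrow (\mathfrak{X}_m(Z)^\phi,\Gamma(A_{Z})^\phi,\delta,D) \xrightarrow{\ \phi_\ast\ } (\mathfrak{X}_m(X),\Gamma(A_{X}),\delta,D)$$
are quasi-isomorphisms by \cite[Prop. 7.4]{OW}, and then to check that the Killing (resp. transversal Killing) conditions are compatible with all three maps. First I would set $\mathfrak{o}_m(Z)^\phi:=\mathfrak{o}_m(Z)\cap \mathfrak{X}_m(Z)^\phi$ and $\Gamma_\eta(A_Z)^\phi:=\Gamma_\eta(A_Z)\cap\Gamma(A_Z)^\phi$. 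Since both $(\mathfrak{o}_m(Z),\Gamma_\eta(A_Z),\delta,D)$ from Proposition \ref{Strongkilling} and $(\mathfrak{X}_m(Z)^\phi,\Gamma(A_Z)^\phi,\delta,D)$ from \cite[Prop. 7.4]{OW} are sub-crossed modules of $(\mathfrak{X}_m(Z),\Gamma(A_Z),\delta,D)$, their intersection is again a sub-crossed module, which gives the first bullet, and the inclusion is simply the restriction of the inclusion of \cite[Prop. 7.4]{OW}, giving the second.

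The geometric heart is the third bullet, that $\phi_\ast$ preserves the Killing conditions. Here I would use that a Riemannian Morita fibration restricts to a Riemannian submersion $\phi^0:(Z_0,\eta^Z)\to(X_0,\eta^X)$ at the level of objects. If $(\xi,v)\in\mathfrak{o}_m(Z)^\phi$ with $\phi_\ast(\xi,v)=(\xi',v')$, then $v$ is $\phi^0$-related to $v'$, so the isometric flow $\varphi_t^v$ covers $\varphi_t^{v'}$; being projectable it maps fibres to fibres and hence preserves the vertical distribution, and being an isometry it preserves the horizontal one, so it intertwines isometrically with $\varphi_t^{v'}$ through $\phi^0$, forcing $\varphi_t^{v'}$ to be an isometry. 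Thus $v'\in\mathfrak{o}(X_0,\eta^X)$, and the same argument applied to $\rho(\alpha)$ shows $\phi_\ast\Gamma_\eta(A_Z)^\phi\subseteq\Gamma_\eta(A_X)$. For the weak counterpart the statement is sharper: since $\phi$ identifies the normal spaces $\nu_z(\mathcal{O}^Z)\cong\nu_{\phi(z)}(\mathcal{O}^X)$ isometrically and the induced normal flows correspond, the transversal-isometry condition on $v$ is simultaneously preserved and \emph{reflected} by $\phi^0$.

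It then remains to upgrade the two crossed-module morphisms to quasi-isomorphisms, for which I would test the invariants $\textnormal{ker}(\delta)$ and $\textnormal{coker}(\delta)$ separately. The kernel case is free: if $\delta(\alpha)=0$ then the base field it covers, which equals $\pm\rho(\alpha)$, vanishes, so $\textnormal{ker}(\delta)\subseteq\textnormal{ker}(\rho)\subseteq\Gamma_\eta$ automatically (and such $\alpha$ are projectable); hence the kernel isomorphisms for the Killing crossed modules coincide with those already known for the full ones. Injectivity on cokernels is also formal, via the identity $\delta^{-1}(\mathfrak{o}_m)=\Gamma_\eta$: a $\delta$-exact Killing field $\delta(\beta)$ has Killing base component $\pm\rho(\beta)$, whence $\beta\in\Gamma_\eta$, so no new relations are introduced when passing to the Killing sub-crossed modules and the induced maps $\mathfrak{o}_m/\delta(\Gamma_\eta)\to\mathfrak{X}_m/\delta(\Gamma(A))$ are injective.

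The surjectivity on cokernels is where I expect the argument to be most demanding, i.e. lifting a (transversal) Killing multiplicative field on $X$ to one on $Z$ modulo $\delta(\Gamma_\eta)$. For the weak counterpart this is immediate from the reflection property above: taking any projectable representative supplied by \cite[Prop. 7.4]{OW}, its base field is automatically transversal Killing on $Z_0$ because its normal action is the isometric one transported from $X_0$, so the representative already lies in $\mathfrak{o}_m^{\textnormal{w}}(Z)^\phi$, and the cokernel identification therefore carries the weak-Killing classes bijectively. The strong case is the genuinely delicate point, since a Killing field on $X_0$ need not admit a Killing horizontal lift through a Riemannian submersion; here I would concentrate the effort, constructing the lift compatibly with the submersion metric and showing that, \emph{modulo} the inner fields $\delta(\Gamma_\eta)$, the transported class is represented by a strong multiplicative Killing field. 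This last reduction is the main obstacle, and it is the only step that genuinely uses more than the transversal data preserved by $\phi$.
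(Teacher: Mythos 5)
Your bullets 1--3, the kernel comparison via $\textnormal{ker}(\delta)\subseteq\textnormal{ker}(\rho)\subseteq\Gamma_\eta$, the identity $\delta(\Gamma(A))\cap\mathfrak{o}_m=\delta(\Gamma_\eta(A))$ giving injectivity on cokernels, and the weak-case surjectivity argument are all correct, and they are considerably more detailed than the paper's own proof, which consists of two sentences: the push-forward of a (weak) Killing field along a Riemannian Morita fibration is again (weak) Killing, by the Riemannian-submersion argument of Lemma \ref{Rmk1}, and then the result ``follows by applying Proposition 7.4 and Theorem 7.3 from \cite{OW} after restricting the structure.'' In particular, your observation that the transversal Killing condition is both preserved and \emph{reflected} through the isometric identification $\nu_z(\mathcal{O}^Z)\cong\nu_{\phi(z)}(\mathcal{O}^X)$ is a complete proof of the weak statement, which is the only part of this lemma that the paper actually uses later (in Theorem \ref{MoritaTheorem}).

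The step you leave open --- surjectivity on cokernels for the \emph{strong} crossed modules --- is a genuine gap in your write-up, but you should not try to close it: it is exactly the point where the lemma overreaches, and the quasi-isomorphism claim is false for the strong crossed modules. The paper's phrase ``after restricting the structure'' silently assumes the lifting property you isolate, and, as you suspect, a Killing field on the base of a Riemannian submersion need not admit any Killing lift, even modulo inner fields. Concretely, let $X$ be the unit groupoid of the flat plane $(\mathbb{R}^2,dx^2+dy^2)$, let $Z_0=\mathbb{R}^2\times S^1$ with $\eta^{Z,(0)}=dx^2+dy^2+e^{2x}d\theta^2$, and let $Z_1\rightrightarrows Z_0$ be the submersion groupoid of the projection $\phi^0:Z_0\to\mathbb{R}^2$, which is a Riemannian submersion; its nerve carries natural $S_{n+1}$-invariant $n$-metrics inducing $\eta^{Z,(0)}$ (write $TZ_n\cong V^{\oplus(n+1)}\oplus(\phi^0)^\ast T\mathbb{R}^2$ and take $\sum_i\vert u_i^{\textnormal{v}}\vert^2+\vert w\vert^2$; compare \cite[Ex. 3.12]{dHF}), so $\phi$ is a Riemannian Morita fibration. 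Here $\mathfrak{X}_m(Z)$ is the space of $\phi^0$-projectable vector fields on $Z_0$, $\Gamma(A_Z)$ is the space of vertical fields, and $\delta$ is the inclusion. The $(x,\theta)$-factor of $\eta^{Z,(0)}$ is a hyperbolic cusp, whose only Killing fields are the multiples of $\partial_\theta$; hence $\mathfrak{o}(Z_0,\eta^{Z,(0)})=\textnormal{span}\{\partial_y,\partial_\theta\}$, so $\mathfrak{o}_m(Z)/\delta(\Gamma_\eta(A_Z))\cong\mathbb{R}\cdot[\partial_y]$ is one-dimensional, whereas $\mathfrak{o}_m(X)/\delta(\Gamma_\eta(A_X))=\mathfrak{o}(\mathbb{R}^2)$ is three-dimensional: no Killing field of $Z_0$ projects to $\partial_x$ or to the rotation field. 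Thus $\phi_\ast$ is not a quasi-isomorphism of the strong crossed modules.

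So the honest conclusion is the one your proof actually delivers: the three bullets hold in both the strong and weak settings (they only require that $\phi_\ast$ preserves the Killing conditions, which you and the paper both establish), the ``Moreover'' holds for the weak counterpart by your reflection argument, and the strong ``Moreover'' should be dropped. This matches your closing remark precisely: the strong lifting is the only step that uses more than the transversal data preserved by $\phi$, and that extra data is genuinely not controlled by a Riemannian Morita fibration.
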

\begin{proof}
By using similar arguments as those in Lemma \ref{Rmk1} if follows that if $v$ is a (weak) Killing vector field on $(Z_0,\eta^Z)$ then $\phi_\ast(v)$ is also a (weak) Killing vector field on $(X_0,\eta^X)$. Therefore, the result follows by applying Proposition 7.4 and Theorem 7.3 from \cite{OW} after restricting the structure.
\end{proof}
The following result is clear.
\begin{lemma}\label{MoritaLemma2}
If $\eta_1$ and $\eta_2$ are equivalent Riemannian metrics on $X_1\rightrightarrows X_0$ then the crossed modules $(\mathfrak{o}_{m}^{\textnormal{w}}(X,\eta_1),\Gamma(A_X),\delta,D)$ and $(\mathfrak{o}_{m}^{\textnormal{w}}(X,\eta_2),\Gamma(A_X),\delta,D)$ agree.
\end{lemma}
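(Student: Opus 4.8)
The plan is to isolate the single metric-dependent ingredient in the crossed module $(\mathfrak{o}_{m}^{\textnormal{w}}(X),\Gamma(A_X),\delta,D)$ and to show that it is insensitive to replacing $\eta_1$ by an equivalent metric $\eta_2$. First I would observe that the sections $\Gamma(A_X)$ together with the structure maps $\delta(\alpha)=\alpha^r-\alpha^l$ and $D_{(\xi,v)}\alpha=[\xi,\alpha^r]|_{X_0}$ are defined purely from the Lie groupoid structure of $X_1\rightrightarrows X_0$ and involve no Riemannian data; hence they coincide verbatim for $\eta_1$ and $\eta_2$. Consequently the two crossed modules agree as soon as their underlying Lie algebras of weak multiplicative Killing vector fields coincide, and since $\mathfrak{o}_{m}^{\textnormal{w}}(X,\eta_i)=\{(\xi,v)\in\mathfrak{X}_m(X)\,|\,v\in\mathfrak{o}^{\textnormal{w}}(X_0,\eta_i)\}$, it suffices to prove the equality $\mathfrak{o}^{\textnormal{w}}(X_0,\eta_1)=\mathfrak{o}^{\textnormal{w}}(X_0,\eta_2)$.

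To establish this, I would recall that a multiplicative vector field $(\xi,v)$ preserves the groupoid orbits, so the flow $\varphi^{v}_t$ of $v$ sends each orbit $\mathcal{O}_x$ into $\mathcal{O}_{\varphi^v_t(x)}$, and its differential therefore descends to a well-defined linear map $\overline{d\varphi^v_t}:\nu_x(\mathcal{O})\to\nu_{\varphi^v_t(x)}(\mathcal{O})$ on the quotient normal spaces $\nu_x(\mathcal{O})=T_xX_0/T_x\mathcal{O}$. This descended map is a purely differential-topological object, independent of any metric. By definition $v\in\mathfrak{o}^{\textnormal{w}}(X_0,\eta_i)$ exactly when each $\overline{d\varphi^v_t}$ is a fibrewise isometry for the inner product $\overline{\eta_i}$ induced by $\eta_i$ on $\nu(\mathcal{O})$.

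Since $\eta_1$ and $\eta_2$ are equivalent, by definition they induce one and the same inner product $\overline{\eta_1}=\overline{\eta_2}$ on every normal space $\nu_x(\mathcal{O})$. Therefore $\overline{d\varphi^v_t}$ is a fibrewise isometry for $\overline{\eta_1}$ if and only if it is one for $\overline{\eta_2}$, yielding $v\in\mathfrak{o}^{\textnormal{w}}(X_0,\eta_1)\Leftrightarrow v\in\mathfrak{o}^{\textnormal{w}}(X_0,\eta_2)$ and hence the claimed equality. The only point that deserves care — and the only potential obstacle — is to phrase the transversal-isometry condition on the intrinsic quotient $\nu(\mathcal{O})=TX_0|_{\mathcal{O}}/T\mathcal{O}$ rather than on the metric-dependent orthogonal complement $T\mathcal{O}^{\perp}$ used elsewhere in the paper; once the condition is read off the quotient normal bundle, on which equivalent metrics agree by hypothesis, the comparison is immediate and no further computation is needed.
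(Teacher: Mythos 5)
Your proposal is correct, and it takes the only natural route — the one the paper itself considers so immediate that it states the lemma with no proof beyond the phrase ``the following result is clear.'' You correctly isolate the single metric-dependent datum, $\mathfrak{o}^{\textnormal{w}}(X_0,\eta_i)$, and your key observation — that the transversal-isometry condition must be read on the intrinsic quotient normal bundle $\nu(\mathcal{O})=TX_0|_{\mathcal{O}}/T\mathcal{O}$, where the descended map $\overline{d\varphi^v_t}$ is metric-independent and where equivalent metrics induce by definition the same inner product — is precisely the reason the statement holds, so your write-up is a faithful expansion of the paper's implicit argument.
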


Suppose that $X$ and $Y$ are Morita equivalent Lie groupoids so that there is a third Lie groupoid $Z$ with Morita fibrations $Z\to X$ and $Z\to Y$. From \cite[Prop. 6.3.1]{dHF2} we know that if $\eta^X$ is a Riemannian metric on $X$ then there exists a Riemannian metric $\eta^Z$ on $Z$ that makes the fibration $Z\to X$ Riemannian. We can slightly modify $\eta^Z$ by a cotangent averaging procedure so that we get another Riemannian metric $\tilde{\eta}^Z$ on $Z$ which descends to $Y$ defining a Riemannian metric $\eta^Y$ making of the fibration $Z\to Y$ Riemannian. It turns out that these pullback and pushforward constructions are well-defined and mutually inverse modulo equivalence of metrics. This is because $\eta^Z$ and $\tilde{\eta}^Z$ turn out to be equivalent, see the proof of Theorem 6.3.3 in \cite{dHF2}. In this case we refer to $(X,\eta^X)$ and $(Y,\eta^Y)$ as being \emph{Morita equivalent Riemannian groupoids}. It suggests a definition for Riemannian metrics over differentiable stacks. Namely, a stacky metric on the orbit stack $[X_0/X_1]$ presented by a Lie groupoid $X_1\rightrightarrows X_0$ is defined to be an equivalence class $[\eta]$ of a Riemannian metric $\eta$ on $X$. For further details the reader is recommended to visit \cite{dHF2}. 

Summing up, we get that:

\begin{theorem}\label{MoritaTheorem}
If $(X_1 \rightrightarrows X_0,\eta^X)$ and $(Y_1 \rightrightarrows Y_0,\eta^Y)$ are Morita equivalent Riemannian groupoids then the crossed modules $(\mathfrak{o}_{m}^{\textnormal{w}}(X),\Gamma(A_X),\delta,D)$ and $(\mathfrak{o}_{m}^{\textnormal{w}}(Y),\Gamma(A_Y),\delta,D)$ are isomorphic in the derived category of crossed modules. In consequence, the following quotient spaces are isomorphic as Lie algebras:
$$\mathfrak{o}_{m}^{\textnormal{w}}(X)/\textnormal{im}(\delta)\cong \mathfrak{o}_{m}^{\textnormal{w}}(Y)/\textnormal{im}(\delta).$$
\end{theorem}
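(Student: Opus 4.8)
The plan is to realize the asserted derived-category isomorphism as a spliced zig-zag of quasi-isomorphisms passing through a common resolving groupoid, and then to read off the Lie algebra statement by passing to cokernels.

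First I would unwind the definition of Morita equivalent Riemannian groupoids recalled above. By hypothesis there is a third Lie groupoid $Z_1\rightrightarrows Z_0$ together with Morita fibrations $\phi_X:Z\to X$ and $\phi_Y:Z\to Y$, and two metrics $\eta^Z,\tilde\eta^Z$ on $Z$ which are equivalent to one another, chosen so that $\phi_X:(Z,\eta^Z)\to(X,\eta^X)$ and $\phi_Y:(Z,\tilde\eta^Z)\to(Y,\eta^Y)$ are Riemannian Morita fibrations.

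Next I would apply the weak version of Lemma \ref{MoritaLemma1} to each fibration separately. For $\phi_X$ this yields the quasi-isomorphisms
$$(\mathfrak{o}_{m}^{\textnormal{w}}(Z,\eta^Z),\Gamma(A_Z),\delta,D)\hookleftarrow(\mathfrak{o}_{m}^{\textnormal{w}}(Z,\eta^Z)^{\phi_X},\Gamma(A_Z)^{\phi_X},\delta,D)\xrightarrow{\phi_{X\ast}}(\mathfrak{o}_{m}^{\textnormal{w}}(X),\Gamma(A_X),\delta,D),$$
and symmetrically for $\phi_Y$, with $\eta^Z$ and $X$ replaced by $\tilde\eta^Z$ and $Y$. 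The decisive linking step is Lemma \ref{MoritaLemma2}: since $\eta^Z$ and $\tilde\eta^Z$ are equivalent on $Z$, the two weak crossed modules at the far left of the two zig-zags literally coincide, that is $(\mathfrak{o}_{m}^{\textnormal{w}}(Z,\eta^Z),\Gamma(A_Z),\delta,D)=(\mathfrak{o}_{m}^{\textnormal{w}}(Z,\tilde\eta^Z),\Gamma(A_Z),\delta,D)$. Splicing at this common object produces a single chain of quasi-isomorphisms from $(\mathfrak{o}_{m}^{\textnormal{w}}(X),\dots)$ to $(\mathfrak{o}_{m}^{\textnormal{w}}(Y),\dots)$, which, by the construction of the derived category as the localization at quasi-isomorphisms, represents an isomorphism there. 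This establishes the first assertion.

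For the Lie algebra statement I would use that, by the very definition of quasi-isomorphism recalled in Section \ref{S:2}, each arrow in the spliced chain induces an isomorphism on the cokernel of the boundary map, compatibly with the induced Lie algebra structure on that cokernel; here that cokernel is exactly $\mathfrak{o}_{m}^{\textnormal{w}}(\cdot)/\textnormal{im}(\delta)$. Composing these cokernel isomorphisms along the chain yields the desired Lie algebra isomorphism $\mathfrak{o}_{m}^{\textnormal{w}}(X)/\textnormal{im}(\delta)\cong\mathfrak{o}_{m}^{\textnormal{w}}(Y)/\textnormal{im}(\delta)$. The only genuinely delicate point is this metric-matching at $Z$: the two halves of the zig-zag meet only because the weak transverse Killing condition depends on the metric through its equivalence class alone, which is precisely the content of Lemma \ref{MoritaLemma2}. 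Everything else is formal once Lemma \ref{MoritaLemma1} is in hand.
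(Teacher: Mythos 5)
Your proposal is correct and takes essentially the same route as the paper: the paper's own proof simply cites Lemmas \ref{MoritaLemma1} and \ref{MoritaLemma2} together with the formal derived-category results of \cite{OW} (Theorem 7.4 and Corollaries 7.1 and 7.2 there), which are precisely the splicing of the two zig-zags of quasi-isomorphisms at the common object $(\mathfrak{o}_{m}^{\textnormal{w}}(Z),\Gamma(A_Z),\delta,D)$ and the passage to cokernels of $\delta$ that you carry out explicitly. The only difference is expository: you reconstitute inline the formal arguments that the paper delegates to the cited results.
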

\begin{proof}
This result is consequence of Lemmas \ref{MoritaLemma1} and \ref{MoritaLemma2} together with Theorem 7.4 and Corollaries 7.1 and 7.2 from \cite{OW}.
\end{proof}

Motivated by the previous result and \cite[Def. 8.1]{OW} we set up the following definition.
\begin{definition}
Let $(X_1 \rightrightarrows X_0,\eta)$ be a Riemannian groupoid. A \emph{geometric Killing vector field} on the quotient Riemannian stack $([X_0/X_1],[\eta])$ is defined to be an element of the quotient
$$\mathfrak{o}([X_0/X_1],[\eta]):=\mathfrak{o}_{m}^{\textnormal{w}}(X)/\textnormal{im}(\delta).$$
\end{definition}

On the one hand, if we consider proper étale Riemannian groupoids then geometric Killing vector fields recover the classical notions of Killing vector fields on both Riemannian manifolds and Riemannian orbifolds as defined for instance in \cite{BZ}. On the other hand, if we consider the Riemannian groupoid $\textnormal{Hol}(M,\mathcal{F})\rightrightarrows M$ associated to a regular Riemannian foliation $(M,\mathcal{F})$ then geometric Killing vector fields recover the notion of transverse Killing vector fields as defined in \cite[p. 84]{Mo}. Each of these particular cases has the property that the obtained algebra of geometric Killing vector fields is finite dimensional. We finish this section by proving that this is always the case if our quotient Riemannian stack is separated (i.e. it is presented by a proper Riemannian groupoid). Let us start by analyzing the Riemannian foliation groupoid case.  A \emph{foliation groupoid} is a Lie groupoid $X_1 \rightrightarrows X_0$ whose space of objects $X_0$ is Hausdorff and whose isotropy groups $X_x$ are discrete for all $x\in X_0$. For instance, every \'etale  Lie groupoid with Hausdorff objects manifold is a foliation groupoid. The converse is not true, however every foliation groupoid is Morita equivalent to an \'etale groupoid. As shown in \cite[Thm. 1]{CM} (see also \cite{C}), being a foliation groupoid is equivalent to the associated Lie algebroid anchor map $\rho:A_X\to TX_0$ being injective. As a consequence, the manifold $X_0$ comes with a regular foliation $\mathcal{F}$ tangent to the leaves of $\mathrm{im}(\rho)\subseteq TX_0$. Note that if $X_1 \rightrightarrows X_0$ is source-connected then the leaves of $\mathrm{im}(\rho)\subseteq TX_0$ coincide with the groupoid orbits. 

\begin{lemma}\label{Finite1}
	If $(X_1 \rightrightarrows X_0,\eta)$ is a Riemannian foliation groupoid with compact orbit space $X_0/X_1$ then the algebra of geometric Killing vector fields on $([X_0/X_1],[\eta])$ has finite dimension.
\end{lemma}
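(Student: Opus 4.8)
The plan is to identify $\mathfrak{o}([X_0/X_1],[\eta])=\mathfrak{o}_{m}^{\textnormal{w}}(X)/\textnormal{im}(\delta)$ with the Lie algebra of transverse Killing vector fields of the underlying Riemannian foliation, and then to bound its dimension by a transverse Killing-equation and jet-determination argument, the finiteness being forced by compactness of the orbit space. First I would set up the foliation picture. Since $X_1\rightrightarrows X_0$ is a foliation groupoid, the anchor $\rho$ is injective and $X_0$ carries a regular foliation $\mathcal{F}$ tangent to $\textnormal{im}(\rho)$, with the $0$-metric $\eta$ inducing a holonomy-invariant transverse metric, so that $(X_0,\mathcal{F})$ is Riemannian of some codimension $q$; moreover, by Theorem~\ref{MoritaTheorem} the quotient $\mathfrak{o}_{m}^{\textnormal{w}}(X)/\textnormal{im}(\delta)$ is unchanged if we replace $X_1\rightrightarrows X_0$ by any Morita-equivalent model. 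Under the identification $\nu(\mathcal{O})\cong T\mathcal{O}^{\perp}$, an element $(\xi,v)\in\mathfrak{o}_{m}^{\textnormal{w}}(X)$ determines a transverse field $\bar{v}\in\Gamma(\nu(\mathcal{F}))$ whose transverse flow consists of normal-bundle isometries, while $\textnormal{im}(\delta)$ consists precisely of the fields tangent to $\mathcal{F}$, since the infinitesimal inner automorphisms $\delta(\alpha)$ cover anchored sections and hence act trivially on $\nu(\mathcal{F})$. Thus, as recalled just before the statement, $\mathfrak{o}([X_0/X_1],[\eta])\cong\mathfrak{l}(X_0,\mathcal{F})$, the algebra of transverse Killing vector fields in the sense of \cite[p.~84]{Mo}.

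Next I would exploit the transverse Killing equation. Let $\nabla^{\nu}$ be the transverse Levi--Civita connection determined by the transverse metric on $\nu(\mathcal{F})$. A transverse Killing field $\bar{X}$ has $A_{\bar{X}}:=\nabla^{\nu}\bar{X}$ fiberwise skew-symmetric, and along any geodesic of the transverse metric with velocity $V$ the pair $(\bar{X},A_{\bar{X}})$ obeys the first-order linear system
\[
\nabla^{\nu}_{V}\bar{X}=A_{\bar{X}}V,\qquad \nabla^{\nu}_{V}A_{\bar{X}}=R^{\nu}(V,\bar{X}),
\]
the transverse analogue of the classical Killing ordinary differential equation, where $R^{\nu}$ is the transverse curvature. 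Consequently, on any connected transversal $U$ to $\mathcal{F}$ the restriction $\bar{X}|_{U}$ is uniquely determined by the single $1$-jet $(\bar{X}_{x},(A_{\bar{X}})_{x})$ in the space $\nu_{x}\oplus\mathfrak{so}(\nu_{x})$ of dimension $q+\binom{q}{2}$, at any one point $x\in U$.

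Finally, I would use compactness to globalize. Since the orbit space $X_0/X_1$ is compact, one can choose finitely many connected transversals $U_{1},\dots,U_{k}$ whose saturations cover $X_0$, with points $x_{i}\in U_{i}$. If a transverse Killing field has vanishing $1$-jet at every $x_{i}$, then by the previous step it vanishes on each $U_{i}$, hence on $\bigcup_{i}\textnormal{Sat}(U_{i})=X_0$ by holonomy invariance; therefore the linear evaluation map $\mathfrak{l}(X_0,\mathcal{F})\to\bigoplus_{i=1}^{k}\bigl(\nu_{x_{i}}\oplus\mathfrak{so}(\nu_{x_{i}})\bigr)$ is injective and $\dim\mathfrak{o}([X_0/X_1],[\eta])\le k\,\bigl(q+\binom{q}{2}\bigr)<\infty$. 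The main obstacle is the second step: establishing the transverse Killing ODE rigorously for the transverse Levi--Civita (Bott) connection and verifying that holonomy invariance makes the $1$-jet datum well defined on $\nu(\mathcal{F})$. A robust alternative, which also secures the globalization of the third step, is to lift to Molino's transverse orthonormal frame bundle, where the lifted foliation is transversely parallelizable and the associated basic manifold is compact, and to transport the classical Myers--Steenrod finiteness from that manifold.
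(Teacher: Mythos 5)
Your overall strategy is sound, and your first route is essentially a hands-on re-derivation of the classical theorem that the paper simply cites. The paper's proof is shorter and more structural: it chooses a complete transversal $\iota\colon T\hookrightarrow X_0$ to the orbit foliation, passes to the restricted \'etale groupoid $X_T\rightrightarrows T$, which is Morita equivalent to $X_1\rightrightarrows X_0$ (cf.\ \cite[p.~136]{MM}), and invokes Theorem~\ref{MoritaTheorem} to identify $\mathfrak{o}([X_0/X_1],[\eta])$ with $\mathfrak{o}(T)^{X}$, the Killing fields of $(T,\iota^*\eta)$ invariant under the normal action; compactness of $X_0/X_1$ forces $T$ to have finitely many connected components (\cite[p.~135]{MM}), and Theorem~3.3 of \cite[p.~238]{KN} (see also \cite[p.~85]{Mo}) gives finiteness. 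Your transverse Killing ODE plus $1$-jet determination along transversals is precisely the proof of that cited theorem, so the two arguments agree in substance; what the paper's route buys is that on the \'etale model the identification step you worry about trivializes ($\Gamma(A_{X_T})=0$, so there is no quotient by $\textnormal{im}(\delta)$ to control), and the transverse Killing equation becomes the honest Killing equation on $(T,\iota^*\eta)$, with no Bott-connection bookkeeping. If you keep your direct route on $X_0$, two implicit facts need stating: (i) injectivity of the anchor lets you write any $v$ tangent to $\mathcal{F}$ as $\rho(\alpha)$ for a unique smooth $\alpha\in\Gamma(A_X)$, and (ii) on a foliation groupoid a multiplicative vector field covering zero vanishes, since $\ker(ds)\cap\ker(dt)\cong\ker(\rho)=0$; together these show the kernel of $(\xi,v)\mapsto\bar{v}$ is exactly $\textnormal{im}(\delta)$. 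Also, your claim that the quotient is isomorphic to all of $\mathfrak{l}(X_0,\mathcal{F})$ overstates: multiplicativity imposes invariance under the normal action of $X_1$, which can be strictly stronger than holonomy invariance (the groupoid need not be the holonomy groupoid, nor source-connected), so in general you only get an embedding into a subalgebra of the transverse Killing fields --- which is all the finiteness argument needs.

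The one genuine gap is in your proposed ``robust alternative'': lifting to Molino's transverse orthonormal frame bundle and transporting Myers--Steenrod finiteness from the compact basic manifold requires $X_0$ itself to be compact, whereas the lemma only assumes the orbit space $X_0/X_1$ is compact and $X_0$ may well be non-compact. That fallback therefore does not apply as stated; the complete-transversal reduction used in the paper is exactly the device that replaces it, converting compactness of the orbit space into finiteness of the set of components of $T$, after which the classical finite-dimensionality theorem applies componentwise.
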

\begin{proof}
Let $\iota: T\hookrightarrow X_0$ be a complete transversal submanifold to the orbit foliation $\mathcal{F}$ of $X_1 \rightrightarrows X_0$  and consider its restricted groupoid $X_T\rightrightarrows T$. As $X_T\rightrightarrows T$ is étale and Morita equivalent to $X_1 \rightrightarrows X_0$ (see \cite[p. 136]{MM}), from Theorem \ref{MoritaTheorem} it follows that 
$$
		\mathfrak{o}([X_0/X_1],[\eta])=  \mathfrak{o}_m^{\textnormal{w}}(X_{T}, \iota^*\eta)/\textnormal{im}(\delta)\cong \mathfrak{o}(T)^{X}. 
$$
Here $\mathfrak{o}(T)^{X}$ denotes the transversal Killing vector fields that are invariant by the normal action. As $X_0/X_1$ is compact we have that $T$ has a finite number of connected components (see \cite[p. 135]{MM}), so that the dimension of $\mathfrak{o}(T)^{X}$ is finite by Theorem 3.3 from \cite[p. 238]{KN} (see also \cite[p. 85]{Mo}). That is, $\mathfrak{o}([X_0/X_1],[\eta])$ is finite dimensional.
\end{proof}

It is simple to see that similar arguments to those used in the proof of the previous Lemma work if we consider regular Riemannian groupoids instead of the Riemannian foliation ones. Here by regular we mean that the anchor map $\rho:A_X\to TX_0$ has locally constant rank. Therefore, as every proper groupoid is regular over a dense and open subset then one would expect that a similar result can be proven if we consider proper Riemannian groupoids. We show below that such a finite dimensional result is true by using the desingularization theorem for proper Riemannian groupoids proved in \cite[s. 6]{PTW}. Namely:

\begin{theorem}\label{FiniteThm2}
	Let $(X_1 \rightrightarrows X_0,\eta)$ be a proper Riemannian groupoid with compact orbit space $X_0/X_1$. Then the algebra of geometric Killing vector fields on $([X_0/X_1],[\eta])$ has finite dimension.
\end{theorem}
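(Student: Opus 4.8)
The plan is to reduce the proper case to the regular case (the regular analogue of Lemma~\ref{Finite1} noted in the paragraph preceding the statement) by passing to a resolution. First I would invoke the desingularization theorem for proper Riemannian groupoids from \cite[s.~6]{PTW}: applied to $(X_1\rightrightarrows X_0,\eta)$ it produces a \emph{regular} proper Riemannian groupoid $(\tilde X_1\rightrightarrows \tilde X_0,\tilde\eta)$ --- the resolution --- together with a blow-down morphism $p$ whose induced map on orbit spaces $\tilde X_0/\tilde X_1\to X_0/X_1$ is proper, surjective, and restricts to a diffeomorphism over the principal stratum. Since $X_0/X_1$ is compact and $p$ is proper on orbit spaces, $\tilde X_0/\tilde X_1$ is compact as well; hence by the regular version of Lemma~\ref{Finite1} (obtained by running the same complete-transversal argument with the regular-foliation Killing estimate of \cite{KN,Mo} and Morita invariance from Theorem~\ref{MoritaTheorem}), the algebra $\mathfrak{o}([\tilde X_0/\tilde X_1],[\tilde\eta])=\mathfrak{o}_{m}^{\textnormal{w}}(\tilde X)/\textnormal{im}(\delta)$ is finite dimensional.

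Next I would produce an injective Lie algebra homomorphism $\mathfrak{o}([X_0/X_1],[\eta])\hookrightarrow \mathfrak{o}([\tilde X_0/\tilde X_1],[\tilde\eta])$, after which the theorem follows from the finite dimensionality just established. Denote by $U\subseteq X_0/X_1$ the open dense principal stratum and by $\tilde U\subseteq \tilde X_0/\tilde X_1$ its preimage, so that $p$ identifies $(\tilde U,[\tilde\eta])\cong (U,[\eta])$ as Riemannian manifolds. A geometric Killing vector field is a transversal infinitesimal isometry, and over $U$ it restricts to an honest Killing vector field of the Riemannian manifold $U$; I would first check that this restriction map $r_X\colon \mathfrak{o}([X_0/X_1],[\eta])\to \mathfrak{o}(U)$ is \emph{injective}. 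This is the rigidity of Killing fields: such a field satisfies the second-order identity $\nabla^2 v=-R(\cdot,v)$, so it is determined by its $1$-jet at a single point, whence a transversal Killing field of the stack that vanishes on the dense stratum $U$ vanishes on every stratum, i.e.\ represents $0$ in $\mathfrak{o}_{m}^{\textnormal{w}}(X)/\textnormal{im}(\delta)$. The same argument on the regular resolution shows that $r_{\tilde X}\colon \mathfrak{o}([\tilde X_0/\tilde X_1],[\tilde\eta])\to \mathfrak{o}(\tilde U)$ is injective.

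The crux is to exhibit a lift $L\colon \mathfrak{o}([X_0/X_1],[\eta])\to \mathfrak{o}([\tilde X_0/\tilde X_1],[\tilde\eta])$ making $r_{\tilde X}\circ L$ equal to $r_X$ under the identification $U\cong \tilde U$; granting this, injectivity of $r_X$ and $r_{\tilde X}$ forces $L$ to be injective and we conclude $\dim \mathfrak{o}([X_0/X_1],[\eta])\le \dim \mathfrak{o}([\tilde X_0/\tilde X_1],[\tilde\eta])<\infty$. To build $L$ I would use functoriality of the $\textnormal{PTW}$ blow-up construction with respect to isometries: the resolution is assembled by successively blowing up the (necessarily isometry-invariant) singular strata, and an isometric flow, which preserves these strata together with their normal sphere bundles, lifts canonically to an isometric flow on each blow-up and hence on $\tilde X$. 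Differentiating the lifted flow of a representative of a geometric Killing field yields a multiplicative vector field on $\tilde X$ whose object component is a transversal Killing field, i.e.\ a class in $\mathfrak{o}_{m}^{\textnormal{w}}(\tilde X)/\textnormal{im}(\delta)$, and by construction it restricts to the original field over $\tilde U\cong U$. The main obstacle is precisely this last step: verifying that an infinitesimal transversal isometry lifts \emph{smoothly across the exceptional divisors} of the resolution and depends linearly on the field, which is where a careful reading of the blow-up charts in \cite{PTW} --- showing that the lifted flow extends smoothly to the blown-up manifold and remains transversally isometric for $\tilde\eta$ --- is required.
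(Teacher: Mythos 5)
Your proposal follows the same skeleton as the paper's proof: invoke the PTW desingularization $(\tilde X_1\rightrightarrows \tilde X_0,\tilde\eta,\pi)$, transfer compactness of orbit spaces through the proper surjection, establish finite dimensionality for the regular resolution, and transfer back along $\pi$ by lifting Killing fields using the functoriality of the blow-up construction (\cite[s. 5.2, Prop. 6.13, Thm. 6.14]{PTW}). The difference, and the genuine gap, lies in your transfer step. The paper observes that once weak multiplicative Killing fields on $(X_1,\eta)$ lift to weak multiplicative Killing fields on $(\tilde X_1,\tilde\eta)$, the pushforward map $[\pi]\colon \mathfrak{o}([\tilde X_0/\tilde X_1],[\tilde\eta])\to \mathfrak{o}([X_0/X_1],[\eta])$ is \emph{surjective}, and finite dimensionality downstairs follows immediately; no injectivity of anything is needed. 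You instead insist on injectivity of the lift $L$, reduced to injectivity of the restriction map $r_X$ to the principal stratum, and your justification of that step does not work as stated: an element of $\mathfrak{o}_{m}^{\textnormal{w}}(X)/\textnormal{im}(\delta)$ is zero precisely when its representative $(\xi,v)$ equals $\delta(\alpha)$ for a \emph{globally defined smooth} section $\alpha\in\Gamma(A_X)$. Knowing that $(\xi,v)$ restricts to something in the image of $\delta$ over the dense saturated open set lying above the principal stratum only produces a section $\alpha_U$ over that open set, and the pointwise rigidity identity $\nabla^2 v=-R(\cdot,v)$ for honest Killing fields says nothing about extending $\alpha_U$ smoothly across the singular strata, where the orbit dimension drops and $\textnormal{im}(\rho)$ jumps. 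So your unique-continuation claim is unproven — but it is also unnecessary: with the lift in hand, surjectivity of the pushforward already closes the argument, exactly as in the paper.

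There is a second, smaller gloss in your handling of the regular case. You appeal to a ``regular version of Lemma \ref{Finite1}'' obtained by the same complete-transversal argument, but for a regular groupoid that is not a foliation groupoid the restricted transversal groupoid $X_T\rightrightarrows T$ is \emph{not} \'etale: it has positive-dimensional isotropy (its anchor vanishes while its algebroid does not), so the identification $\mathfrak{o}_{m}^{\textnormal{w}}(X_T)/\textnormal{im}(\delta)\cong \mathfrak{o}(T)^{X}$ used in Lemma \ref{Finite1} does not carry over verbatim; the isotropy contribution must be accounted for. The paper does this by invoking Moerdijk's classification of regular Lie groupoids \cite{Moe} together with \cite[Prop. 6.3.2]{dHF2}: the regular resolution fits into an extension $(K,\iota^\ast\tilde\eta)\to(\tilde X_1,\tilde\eta)\to(E,q_\ast\tilde\eta)$ with $K$ a bundle of connected Lie groups and $E$ a foliation groupoid, which induces a short exact sequence of Killing algebras; finiteness then follows from Lemma \ref{Finite1} applied to $E$ and the classical theorem of \cite[p. 238]{KN} applied to the remaining piece. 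Your gloss is partly excused by the paper's own remark preceding the theorem that the regular case follows by ``similar arguments'', but as written your proof leans on an unestablished statement at this point as well.
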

\begin{proof}
Let us denote by $(\tilde{X_1} \rightrightarrows \tilde{X_0},\tilde{\eta},\pi)$ the Riemannian desingularization of $(X_1 \rightrightarrows X_0,\eta)$, see Theorem 6.10 in \cite{PTW}. We have that $(\tilde{X_1} \rightrightarrows \tilde{X_0},\tilde{\eta})$ is a proper regular Riemannian groupoid and $\pi:\tilde{X_1}\to X_1$ is a proper Riemannian fibration which is almost-everywhere an isometry. As consequence of the functoriality properties described in \cite[s. 5.2]{PTW} it follows that any Lie groupoid automorphism of $X_1$ preserves its co-dimensional stratum data (see \cite[s. 3]{PTW}), so that they can be lifted to Lie groupoid automorphism of $\tilde{X_1}$. That is, we can lift multiplicative vector fields on $X_1$ to multiplicative vector fields on $\tilde{X_1}$ by lifting their 1-parametric (local) families of Lie groupoid automorphisms determined by their flows. In particular, by using both Proposition 6.13 and Theorem 6.14 from \cite{PTW} we get that weak multiplicative Killing vector fields on $(X_1,\eta)$ can be lifted to weak multiplicative Killing vector fields on its desingularization $(\tilde{X_1},\tilde{\eta})$, thus obtaining an surjective algebra homomorphism 
$$[\pi]: \mathfrak{o}([\tilde{X_0}/\tilde{X_1}],[\tilde{\eta}])\to \mathfrak{o}([X_0/X_1],[\eta]).$$

By arguing as in Proposition 6.3.2 from \cite{dHF2} and by using the classification of regular Lie groupoids given in \cite{Moe}, since the groupoid $\tilde{X_1}$ is regular we may assume that it fits into an extension of Riemannian groupoids $(K,\iota^\ast\tilde{\eta}) \xrightarrow[]{\it \iota} (\tilde{X_1},\tilde{\eta})\xrightarrow[]{\it q} (E,q_\ast\tilde{\eta})$ over $\tilde{X_0}$ where $K$ is a bundle of connected Lie groups (i.e. a Lie groupoid whose source and target maps agree), $E$ is a foliation groupoid, $\iota$ is a groupoid Riemannian embedding and $\pi$ is a groupoid Riemannian submersion with connected fibers ($\tilde{\eta}$ possibly needs to be averaged in order to define an equivalent groupoid metric which descends to the quotient). This in turn induces an extension of algebras 
$$0\to \mathfrak{o}(\tilde{X_0},\iota^\ast\tilde{\eta})\to \mathfrak{o}([\tilde{X_0}/\tilde{X_1}],[\tilde{\eta}])\to \mathfrak{o}([\tilde{X_0}/E],[q_\ast\tilde{\eta}])\to 0.$$

As $X_0/X_1$ is compact and $\pi$ is proper and surjective it follows $\tilde{X_0}/\tilde{X_1}$ is also compact and from Lemma \ref{Finite1} together with Theorem 3.3 from \cite[p. 238]{KN} (see also \cite[p. 85]{Mo}) it follows that both $\mathfrak{o}([\tilde{X_0}/E],[q_\ast\tilde{\eta}])$ and $\mathfrak{o}(\tilde{X_0},\iota^\ast\tilde{\eta})$ have finite dimension so that $\mathfrak{o}([\tilde{X_0}/\tilde{X_1}],[\tilde{\eta}])$ has also finite dimension. That is, $\mathfrak{o}([X_0/X_1],[\eta])$ is finite dimensional since $[\pi]$ is surjective.
\end{proof}

We end the paper by describing some features about our notion of geometric Killing vector field which are motivated by some results proved in \cite{CMS}. Let $(X_1\rightrightarrows X_0,\eta)$ be a Riemannian groupoid. A vector field $\xi_1$ on $X_1$ is said to be \emph{projectable of Killing type} if there exists a vector field $v_1$ on $(X_0,\eta^{(0)})$ such that the following conditions are satisfied:
\begin{enumerate}
	\item[a.] $\xi_1$ and $v_1$ are both $s_X$-related and $t_X$-related, and
	\item[b.] if $\varPhi_\tau$ and $\varphi_\tau$ respectively denote the (local) flows of $\xi_1$ and $v_1$ then the induced map $\overline{d\varphi_\tau}:\nu(\mathcal{O}_x)\to \nu(\mathcal{O}_{\varphi_\tau(x)})$ is a linear isometry for each groupoid orbit $\mathcal{O}_x$ for which $\varphi_\tau(x)$ is defined.
\end{enumerate}

Compare with \cite[Def. 4.6]{CMS}. We shall also refer to $v_1$ as a \emph{weak Killing vector field} on $(X_0,\eta^{(0)})$. The space of vector fields of Killing type on $X_1$ will be denoted by $\Gamma^\textnormal{proj}_\eta(X_1)$. Firstly, note that Condition b. makes sense because the local flows $\varPhi_\tau$ and $\varphi_\tau$ commute with both $s_X$ and $t_X$ as consequence of Condition a. Secondly, it follows that $\xi_1$ is also a weak Killing vector field on $(X_1,\eta^{(1)})$ in the sense that $\overline{d\varPhi_\tau}:\nu(G_{\mathcal{O}_x})\to \nu(G_{\mathcal{O}_{\varphi_\tau(x)}})$ is a linear isometry since $s_X$ and $t_X$ are Riemannian submersions. Actually, it is simple to see that a similar property holds true for the vector field $\xi_2(p,q)=(\xi_1(p),\xi_1(q))$ on $(X_2,\eta^{(2)})$ induced by $\xi_1$. 

We explain below how proper Haar measure systems on proper Riemannian groupoids determine projections from the space of projectable vector fields of Killing type on $(X_1,\eta^{(1)})$ to the space of weak multiplicative Killing vector fields on $(X_1\rightrightarrows X_0,\eta)$. This can be done by following \cite[s. 2.5]{CS}.

\begin{theorem}\label{ExistenceWeak}
Let $(X_1\rightrightarrows X_0,\eta)$ be a proper Riemannian groupoid. Then any proper Haar measure system $\lbrace \mu^x\rbrace$ for $X_1\rightrightarrows X_0$ induces a linear map from $\Gamma^\textnormal{proj}_\eta(X_1)$  to $\mathfrak{o}_{m}^{\textnormal{w}}(X,\eta)$.
\end{theorem}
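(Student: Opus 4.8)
The plan is to adapt the cutoff-averaging technique of \cite[s. 2.5]{CS} in order to turn a projectable vector field of Killing type into a genuinely multiplicative one. Since $X_1\rightrightarrows X_0$ is proper, the proper Haar system $\lbrace\mu^x\rbrace$ admits a cutoff function $c\colon X_1\to\mathbb{R}_{\geq 0}$ with $\int_{s_X^{-1}(x)}c(g)\,d\mu^x(g)=1$ for every $x\in X_0$; this is what renders the fiberwise integrals below finite and suitably normalized. Given $\xi_1\in\Gamma^{\textnormal{proj}}_\eta(X_1)$ with associated base field $v_1$ and flows $\varPhi_\tau$, $\varphi_\tau$, I would define a vector field $\xi$ on $X_1$ by
$$\xi(p)=\int_{s_X^{-1}(t_X(p))}\frac{d}{d\tau}\Big|_{\tau=0}\Big(i_X(\varPhi_\tau(g))\ast\varPhi_\tau(g\ast p)\Big)\,c(g)\,d\mu^{t_X(p)}(g).$$
For each fixed $g$ with $s_X(g)=t_X(p)$ the curve $\tau\mapsto i_X(\varPhi_\tau(g))\ast\varPhi_\tau(g\ast p)$ is a well-defined arrow from $\varphi_\tau(s_X(p))$ to $\varphi_\tau(t_X(p))$ which equals $p$ at $\tau=0$ (because $i_X(g)\ast(g\ast p)=p$), so its $\tau$-derivative lies in the fixed vector space $T_pX_1$ and the integrand may genuinely be averaged there. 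Properness together with the cutoff guarantees smoothness and well-definedness of $\xi$, exactly as in the averaging of groupoid metrics carried out in \cite{dHF,CS}.

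Next I would check that $\xi$ covers $v_1$. Applying $ds_X$ and $dt_X$ commutes with the integral, and since $s_X\big(i_X(\varPhi_\tau(g))\ast\varPhi_\tau(g\ast p)\big)=\varphi_\tau(s_X(p))$ is independent of $g$, the normalization $\int c\,d\mu=1$ yields $ds_X(\xi(p))=v_1(s_X(p))$; likewise $dt_X(\xi(p))=v_1(t_X(p))$. Hence $\xi$ is both $s_X$- and $t_X$-related to $v_1$, so that $\xi$ covers $v_1$. In particular, once $\xi$ is shown to be multiplicative, the base field of the pair $(\xi,v_1)$ is $v_1$, whose flow $\varphi_\tau$ is a transversal isometry of $(X_0,\eta)$ by condition (b) in the definition of $\Gamma^{\textnormal{proj}}_\eta(X_1)$; this is precisely the requirement $v_1\in\mathfrak{o}^{\textnormal{w}}(X_0,\eta)$, and therefore $(\xi,v_1)\in\mathfrak{o}^{\textnormal{w}}_m(X,\eta)$.

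The heart of the argument, and the step I expect to be the main obstacle, is proving that $\xi$ is multiplicative, i.e. $\xi(p\ast q)=\xi(p)\ast\xi(q)$ for composable $(p,q)$. The starting point is the telescoping identity
$$i_X(\varPhi_\tau(g))\ast\varPhi_\tau(g\ast p\ast q)=\big(i_X(\varPhi_\tau(g))\ast\varPhi_\tau(g\ast p)\big)\ast\big(i_X(\varPhi_\tau(g\ast p))\ast\varPhi_\tau(g\ast p\ast q)\big),$$
whose $\tau$-derivative at $0$ factors, through the tangent multiplication $Tm_X$, the integrand for $p\ast q$ at $g$ as the integrand for $p$ at $g$ composed with the integrand for $q$ at $g\ast p$. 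Feeding this into the integral and using the bilinearity of $Tm_X$, the part carrying the first factor matches $\xi(p)$ directly, and the identity $\xi(p\ast q)=\xi(p)\ast\xi(q)$ reduces to the invariance statement that the average of the integrand for $q$ over $g\in s_X^{-1}(t_X(p))$ against $c(g)\,d\mu^{t_X(p)}$, precomposed with the change of variables $g\mapsto g\ast p$, agrees with its average directly over $s_X^{-1}(t_X(q))$ against $c\,d\mu^{t_X(q)}$. This is exactly where the invariance of the proper Haar system must be balanced against the defect of the cutoff $c$ under the right translation $R_p$; reconciling these two is the delicate bookkeeping performed in \cite[s. 2.5]{CS}, and it is the only nonformal point. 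Finally, linearity of the assignment $\xi_1\mapsto(\xi,v_1)$ is immediate, since the integrand is a first-order variation of the flow and hence depends linearly on $\xi_1$ through the values $\xi_1(g)$ and $\xi_1(g\ast p)$, completing the construction of the desired linear map $\Gamma^{\textnormal{proj}}_\eta(X_1)\to\mathfrak{o}^{\textnormal{w}}_m(X,\eta)$.
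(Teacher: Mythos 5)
Your proposal is correct and takes essentially the same route as the paper: both construct the multiplicative vector field $\xi$ by averaging $\xi_1$ against the proper Haar system exactly as in \cite[s. 2.5]{CS} (whose multiplicativity bookkeeping both you and the paper ultimately cite rather than redo), and both conclude by observing that the resulting base field lies in $\mathfrak{o}^{\textnormal{w}}(X_0,\eta)$ thanks to condition b.\ in the definition of $\Gamma^{\textnormal{proj}}_\eta(X_1)$. The only differences are cosmetic: you integrate over $s$-fibers using left composition where the paper integrates over $t^{-1}(s_X(p))$ with right translations, and you identify the base field of the average directly as $v_1$ via the normalization, whereas the paper introduces an averaged field $v$ and verifies its flow is a transversal isometry using that $\overline{dt}$ is a fiberwise isometry --- the two coincide since the normalized averages of the constant fiberwise values $dt_a(\xi_1(a))=v_1(t_X(a))$ return $v_1$ itself.
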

\begin{proof}
	Let us pick a projectable vector field of Killing type $\xi_1$ on $(X_1,\eta^{(1)})$. From \cite[s. 2.5]{CS} we know that we can construct a multiplicative vector field $\xi:X_1\to TX_1$ by taking the average with respect to $\lbrace \mu^x\rbrace$:
	$$\xi(p)=\int_{a\in t^{-1}(s(p))}dm_{(pa,a^{-1})}(\xi_1(ap),di_a(\xi_1(a))\mu(a).$$
	This vector field is $s_X$-related with the vector field $v$ on $X_0$ defined as
	$$v(x)=\int_{a\in t^{-1}(x)}dt_a(\xi_1(a))\mu(a).$$
	Therefore, our result will follow once we prove that the flow of $v$ is a local transversal isometry of $(X_0,\eta)$ since $\xi$ and $v$ are $s_X$-related. However, as $\xi_1$ is a weak Killing vector field on $(X_1,\eta^{(1)})$, this follows from a straightforward computation after noting that the flow of $v$ is given by $\varphi^v_r(x)=\int_{a\in t^{-1}(x)}(t\circ \varphi^{\xi_1}_r)(a)\mu(a)$ and $\overline{dt}:\nu(G_{\mathcal{O}})\to \nu(\mathcal{O})$ is a fiberwise isometry since $t_X$ is a Riemannian submersion. Hence, the assignment $\xi_1\mapsto (\xi,v)$ establishes the desired projection. 
\end{proof}

As consequence of Remark \ref{WeakKillingNerve} we may conclude that the previous result is actually a fact that can be proven for any $n$-metric $\eta^{(n)}$ on $X_n$. 

In \cite[s. 4.4]{CMS} it was studied the ``normal'' bundle $\vartheta:=TX_0/\rho(A_X)$ where $\rho:A_X\to TX_0$ is the anchor map of the Lie algebroid $A_X$ of $X_1\rightrightarrows X_0$. This is a smooth vector bundle only in the regular case. Its space of sections is defined to be the quotient $\Gamma(\vartheta):=\mathfrak{X}(X_0)/\textnormal{im}(\rho)$. A section $[v]\in \Gamma(\vartheta)$ is called \emph{invariant} if there exists a vector field $\xi$ on $X_1$ which is both $s_X$-related and $t_X$-related to $v$. The resulting space of invariant elements is denoted by $\Gamma(\vartheta)^{\textnormal{inv}}$. Recall that for each section $\alpha\in \Gamma(A_X)$ we have an associated multiplicative vector field $\delta(\alpha)=(\alpha^r-\alpha^l,\rho(\alpha))$ on $X_1\rightrightarrows X_0$. From Lemma 4.7 in \cite{CMS} it follows that there is a natural linear map from $\mathfrak{X}_{m}(X)/\textnormal{im}(\delta)$ to $\Gamma(\vartheta)^{\textnormal{inv}}$ which associates to a multiplicative vector field $\xi$ on $X_1$ the class modulo $\textnormal{im}(\rho)$ of the vector field $v$ on $X_0$ associated with $\xi$. Furthermore, if $X_1\rightrightarrows X_0$ is proper then the latter map induces an isomorphism $\mathfrak{X}_{m}(X)/\textnormal{im}(\delta)\cong \Gamma(\vartheta)^{\textnormal{inv}}$, see Theorem 6.1 in \cite{CMS}. 

Let $(X_1\rightrightarrows X_0,\eta)$ be a proper Riemannian groupoid. Motivated by the previous facts we define the space of \emph{Killing invariant sections} $\Gamma_\eta(\vartheta)^{\textnormal{inv}}$ as the set of sections $[v]\in \Gamma(\vartheta)$ for which there exists a projectable vector field of Killing type $\xi$ on $X_1$ over $v$. That is, Conditions a. and b. in the definition of projectable vector field of Killing type are satisfied with $\xi$ and $v$. Therefore, as consequence of Lemma 4.7 and Theorem in \cite{CMS} we immediately get that:
\begin{proposition}
There exists a natural isomorphism between $\mathfrak{o}_{m}^{\textnormal{w}}(X,\eta)/\textnormal{im}(\delta)$ and $\Gamma_\eta(\vartheta)^{\textnormal{inv}}$.
\end{proposition}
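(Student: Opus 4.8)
The plan is to obtain the claimed isomorphism by simply restricting the natural isomorphism $\Lambda:\mathfrak{X}_{m}(X)/\textnormal{im}(\delta)\xrightarrow{\cong}\Gamma(\vartheta)^{\textnormal{inv}}$ of \cite{CMS} (Lemma 4.7 and Theorem 6.1), given by $[(\xi,v)]\mapsto [v]$, to the subspaces singled out by the single extra requirement that the base field $v$ be a weak Killing vector field on $(X_0,\eta^{(0)})$.

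First I would check that $\textnormal{im}(\delta)\subseteq \mathfrak{o}_{m}^{\textnormal{w}}(X,\eta)$, so that the left-hand side is a genuine subquotient of $\mathfrak{X}_{m}(X)/\textnormal{im}(\delta)$. For $\alpha\in\Gamma(A_X)$ one has $\delta(\alpha)=(\alpha^{r}-\alpha^{l},\rho(\alpha))$, and the base field $\rho(\alpha)$ is tangent to the groupoid orbits; its flow is precisely $\iota_{\sigma_\tau}$ for the one-parameter family of bisections integrating $\alpha$, which by the very definition of a $0$-metric induces linear isometries $\overline{d\iota_{\sigma_\tau}}:\nu(\mathcal{O}_x)\to\nu(\mathcal{O}_{\iota_{\sigma_\tau}(x)})$. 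Hence $\rho(\alpha)\in\mathfrak{o}^{\textnormal{w}}(X_0,\eta)$ and $\delta(\alpha)\in\mathfrak{o}_{m}^{\textnormal{w}}(X,\eta)$. Since $\Lambda$ is injective, its restriction is well defined and injective on $\mathfrak{o}_{m}^{\textnormal{w}}(X,\eta)/\textnormal{im}(\delta)$, so it only remains to identify the image with $\Gamma_\eta(\vartheta)^{\textnormal{inv}}$.

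For the forward inclusion I would note that any $(\xi,v)\in\mathfrak{o}_{m}^{\textnormal{w}}(X,\eta)$ is, in particular, a projectable vector field of Killing type over $v$: multiplicativity forces $\xi$ to be simultaneously $s_X$- and $t_X$-related to $v$ (Condition a), while the membership $v\in\mathfrak{o}^{\textnormal{w}}(X_0,\eta)$ says exactly that the flow of $v$ acts by transversal isometries (Condition b); therefore $\Lambda([(\xi,v)])=[v]\in\Gamma_\eta(\vartheta)^{\textnormal{inv}}$. For surjectivity, take $[v]\in\Gamma_\eta(\vartheta)^{\textnormal{inv}}$ witnessed by a projectable vector field of Killing type $\xi_1$ over $v$. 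Then in particular $[v]\in\Gamma(\vartheta)^{\textnormal{inv}}$, and surjectivity of $\Lambda$ yields a multiplicative field whose base component, after subtracting a suitable $\delta(\beta)$, may be taken to be this same representative $v$. Concretely, such a field is produced by the averaging of Theorem \ref{ExistenceWeak}, which sends $\xi_1$ to a pair $(\xi,v)$ with unchanged base field $v$ (because $\xi_1$ is $t_X$-related to $v$, the fibrewise integrand equals the constant $v(x)$, so the normalized average returns $v(x)$). As $v$ is weak Killing, $(\xi,v)\in\mathfrak{o}_{m}^{\textnormal{w}}(X,\eta)$ and $\Lambda([(\xi,v)])=[v]$, establishing surjectivity.

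The main point, and the only place where properness enters, is this surjectivity step: one must promote a merely projectable (a priori non-multiplicative) vector field of Killing type to an honest weak multiplicative Killing field without altering its class in $\vartheta$. This is granted abstractly by the isomorphism of \cite{CMS} and realized explicitly by the Haar averaging of Theorem \ref{ExistenceWeak}; the compatibility I would keep track of is that averaging does not disturb the transversal behaviour, so that the resulting base field still lies in $\mathfrak{o}^{\textnormal{w}}(X_0,\eta)$ and represents $[v]$. Finally, because the bracket on $\Gamma(\vartheta)^{\textnormal{inv}}$ and the quotient bracket on $\mathfrak{X}_{m}(X)/\textnormal{im}(\delta)$ are intertwined by $\Lambda$, the restriction is automatically an isomorphism of Lie algebras, matching the statement.
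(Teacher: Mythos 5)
Your proposal is correct and follows essentially the same route as the paper: the paper obtains the proposition by restricting the natural isomorphism $\mathfrak{X}_{m}(X)/\textnormal{im}(\delta)\cong\Gamma(\vartheta)^{\textnormal{inv}}$ of Lemma 4.7 and Theorem 6.1 in \cite{CMS} to the Killing-type subspaces, with the Haar-averaging of Theorem \ref{ExistenceWeak} supplying exactly the surjectivity step you describe. Your write-up merely makes explicit the checks the paper leaves implicit ($\textnormal{im}(\delta)\subseteq\mathfrak{o}_{m}^{\textnormal{w}}(X,\eta)$, the forward inclusion via multiplicativity, and the fact that averaging leaves the base field unchanged), all of which are sound.
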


In particular, Theorem \ref{ExistenceWeak} provides us with a method to construct geometric Killing vector fields over the quotient Riemannian stack $([X_0/X_1],[\eta])$.\\

\noindent {\bf Conflict of interests.} The authors declare that they have no conflict of interests.

\end{document}